\def\dfrac{\displaystyle \frac}
\newtheorem{theorem}{Theorem}
\newcounter{lemm}
\newtheorem{lemma}[lemm]{Lemma}
\newcounter{prop}
\newtheorem{proposition}[prop]{Proposition}
\newcounter{coro}
\newtheorem{corollary}[coro]{Corollary}
\newcounter{rema}
\newenvironment{remark}{\smallskip\noindent\stepcounter{rema}{\bf Remark \arabic{rema}}.}{\qed\smallskip}
\newcounter{exam}
\newenvironment{example}{\smallskip\noindent\stepcounter{exam}{\bf Example \arabic{exam}}.}{\qed\smallskip}
\begin{document}
\date{\today}
\vspace*{30mm}
\title{Five dimensional almost para-cosymplectic manifolds with contact Ricci potential}
\author{Piotr Dacko}

\email{{\tt piotrdacko{\char64}yahoo.com}}
\footnotetext[2]{2000 MSC: 53C15; 53C30; 53C50.}
\footnotetext[3]{Keywords: para-cosymplectic manifold, contact manifold, Walker space, Einstein manifold, conformally flat manifold, flat manifold, Lie group}
\begin{abstract}
In the paper are studied  5-dimensional pseudo-Riemannian manifolds equipped with the 
almost para-contact structure which is an \
analogon of the almost contact structure in the Riemannian geometry. It is assumed that curvature and 
the structure affinor commute. These manifolds admits 
a compatible cosymplectic structure in the sense of P. Libermann.  There are found explicit expressions 
of the connection, the curvature, the Weyl curvature and the Ricci tensor. The structure affinor 
and the Ricci tensor define a closed 2-form called the Ricci form. 
In the paper are  classified manifolds 
with the $\eta$-Einstein Ricci tensor,  manifolds with the contact Ricci potential, which is a classical contact form, 
locally flat manifolds and there are described all connected, simply connected Lie groups  admitting 
left-invariant structure.  All these manifolds are in fact Walker spaces - manifolds with parallel 
isotropic distribution. However the emphasize is put on the structure, in 
consequence the Walker's classification theorem is not applied.   
\end{abstract}
\maketitle

\begin{center}
 \small{\it Dedicated to the Memory of my Mother}
\end{center}

\section{ Introduction.}

The paper \cite{Erdem} arose an interest in studying similar problems in the framework
of almost para-cosymplectic geometry. It was very soon clear that the geometry of an almost para-cosymplectic manifold is quite different than an almost cosymplectic manifold against some expectations. The most important difference of what we know today is that there is no Goldberg-Yano-like theorem: in \cite{GY} S. I. Goldberg and K. Yano proved that the condition 
\[
 R(X,Y)\varphi Z = \varphi R(X,Y)Z
\]
for an almost cosymplectic manifold implies
\[
\nabla\varphi = 0, 
\]
and the manifold is cosymplectic according to D. E. Blair theorem. The examples which appeared in \cite{Dtsu} showed that similar theorem  in the case of almost para-cosymplectic manifold cannot hold. So we may say that there are 'strong' para-cosymplectic manifolds satisfying
\[
 \nabla\varphi =0,
\]
and manifolds which satisfy only the weaker condition 
\[
 R(X,Y)\varphi Z = \varphi R(X,Y)Z,
\]
and these manifolds are called 'weakly para-cosymplectic'. It was very unexpected and attracted  much attention.

In \cite{Dtsu} it was obtained the local characterization of weakly para-cosymplectic manifolds with
para-K\"ahlerian leaves.
There are four classes classified by the properties a tensor field $A=-\nabla\xi$. 
As $A$ is self-adjoint (or in different terminology: symmetric) a symmetric form $g(AX,Y)$ gives rise to a notion
of elliptic, hyperbolic or parabolic manifold depending on a rank  $r(A) \leqslant 2$ and  index $i$:
\begin{enumerate}
 \item $r=1$, the manifold is parabolic,
 \item $r=2$
     \begin{enumerate}
         \item $i=0$ - hyperbolic, (or neutral)
         \item $i=2$ - elliptic, 
      \end{enumerate}
 \end{enumerate}
if the manifold has para-K\"ahler leaves then $r=0$ ($A=0$) is equivalent for the manifold to be para-cosymplectic.

We shortly summarize the contents of the paper. The Section \ref{secCurv} is general. The results provided are valid not only 
for 5-dimensional manifolds but in the general case. Here we establish recall the basic curvature identities, we enable the 
existence of the Ricci form and we prove that the Ricci form is closed. Also we prove that locally conformally flat 
weakly para-cosymplectic manifolds in dimensions $\geqslant 5$ are locally flat.

The Section \ref{secCon} is the main technical part of the paper. Here appears the notion of adopted frames and coframes, 
in this section are found the expressions for the Levi-Civita connection and Cartan's structure equations. Subsequently 
we obtain the form of the curvature, the Weyl curvature and the Ricci tensor which are fundamental. We introduce 
in the section the idea of algebraic scalar invariant as a particular function of the curvature coefficients.

In the Section \ref{secEta} we provide a local classification of $\eta$-Einstein manifolds, ie. manifolds with the Ricci tensor
of the form
\[
 Ric(X,Y) = \dfrac{r}{4}(g(X,Y)-\eta(X)\eta(Y)),
\]
$r$ denotes the scalar curvature and on the results of the Section \ref{secCurv}, $r$ have to be constant and it assumed $r\neq 0$. 

The Section \ref{secCont} is devoted to the local classification of the manifolds with contact Ricci potential. More precisely, 
 such manifold admits a contact form (non-unique) which exterior derivative is equal to the Ricci form $\rho$, and thus via the 
structure affinor $\varphi$, the Ricci form determines the Ricci tensor
\[
 \rho(\varphi X,Y)= Ric(X,Y),
\]
similarly as for example in the geometry of the K\"ahler manifolds.

In the Section \ref{secFlat} we classify locally flat manifolds. It is interesting to point out that the local flatness of the metric
does not imply integrability of the structure. Thus here are described elliptic or hyperbolic locally flat manifolds. Also here 
we provide an example of the locally flat manifold which admits a transitive group of automorphisms and in the fact there are at least 
two non-isomorphic maximal (with effective action) such groups.   

Finally in the last Section we classify all connected, simply connected Lie groups $\mathcal{G}$ which admits a left-invariant
structure $(\varphi,\xi,\eta,g)$, all the tensors are left-invariant, such that $(\mathcal{G},\varphi,\xi,\eta,g)$ is a weakly 
para-cosymplectic manifold with para-K\"ahler leaves. 

The research on the subject of this paper started several years ago. Partially, some results mainly form the Section \ref{secCurv},
from the last Section and some their consequences like the existence of the contact Ricci potential for $\eta$-Einstein manifolds, were presented at the seminar 
held at the University of Bari, Italy in June, 2011. On the occasion of the publishing of this paper I would like to express my gratitude 
to prof. Anna M. Pastore, prof. Maria Falcitelli for invitation and hospitality. Also I would like to thank heartily to my colleagues 
Giulia  Dileo and Vincenzo Saltarelli. Particularly to Giulia Dileo who was my guide during my visit. 
My visit was possible thank to financial support from the University of Bari, including the covering of 
all expenses like the airplane and accommodation. 

Also I would like to express my sincere gratitude to Danuta Konfederat. Thank to  her invaluable support and encouragement 
it was possible to finish this paper.

The References provided are far from being complete. The author in advance would like to express his gratitude to everyone who 
will help to complete the References pointing the papers which cover the topic of this paper or have other substantial relationships.

At some parts of this paper for testing purposes the author used ``Maxima'' CAS-algebra system version 5.29
\footnote{http://maxima.sourceforge.net}, which is a free GPL-based, open source application derived from LISP based ``MACSYMA'' created at MIT. The author found ``Maxima'' a useful replacement for all commercial products. ``Maxima'' is low-resources application: the author used it quite effectively on Celeron\texttrademark  1.2 GHz microprocessor with 512 MB of memory, on Slackware\texttrademark Linux v. 12.2.

\section{Preliminaries}
\subsection{Almost para-cosymplectic structures and manifolds}
A quadruple  $(\phi,\xi,\eta,g)$ of the tensor fields on a manifold $\mathcal{M}$, $\dim \mathcal{M}=2n+1 \geqslant 3$, such that
\begin{equation*}
\begin{array}{l}
\phi^2 = Id - \eta\otimes \xi,\quad \eta(\xi) = 1, \\
g(\phi X,\phi Y) = -g(X,Y) +\eta(X)\eta(Y), \quad \eta(X) = g(\xi,X),
 ×
\end{array}
\end{equation*}
is called an almost para-contact metric structure with hyperbolic metric \cite{Erdem}.
The tangent bundle  splits into a direct sum 
\[
 T\mathcal{M} = \lbrace\xi\rbrace\oplus\mathcal{D}_{-1}\oplus\mathcal{D}_{+1},
\]
where $\lbrace\xi\rbrace$ denotes a line vector bundle spanned by the non-vanishing vector field $\xi$ and $\mathcal{D}_{\pm 1}$ are vector bundles corresponding to the $-1$ and $+1$ eigenspaces of $\varphi$, $\dim \mathcal{D}_{-1} = \dim \mathcal{D}_{+1} = n$. The distributions $\mathcal{D}_{-1}$, $\mathcal{D}_{+1}$ are totally isotropic
\[
 g(X,Y)=0, \quad\; \text{if}\;X, Y \in \Gamma(\mathcal{D}_{-1}) \; \text{or}\; X, Y \in \Gamma(\mathcal{D}_{+1}).
\]
A tensor field
\[
 \varPhi(X,Y)=g(\phi X,Y),
\]
is a 2-form called the fundamental form of the manifold.

A manifold $\mathcal{M}$ equipped with an almost para-contact metric structure $(\varphi,\xi,\eta,g)$ is called  almost para-cosymplectic if the structure form $\eta$ and the fundamental form $\varPhi$ are both closed 
$$d\eta=0,\quad d\varPhi=0.$$
The annihilator  $\mathcal{A}: \eta = 0$  defines a completely integrable distribution (as $\eta$ is closed). 
A leaf $\mathcal{N}\in \mathcal{F}$ of the corresponding codimension one foliation  $\mathcal{F}$ in a natural way   
inherits an almost para-Hermitian structure $(P, H)$
\[
 P^2 = Id,\quad H(P\cdot,P\cdot) = -H(\cdot,\cdot).
\]
 Indeed, if $\sigma$ is an inclusion map
$\sigma: \mathcal{N} \subset \mathcal{M}$ then
\[
 \sigma_*(PX) = \varphi(\sigma_* X), \quad g(\sigma_* X,\sigma_* Y) = H(X,Y),
\]
for vector fields $X$, $Y$ tangent to $\mathcal{N}$. For a survey on almost para-Hermitian geometry we refer to \cite{CFG}.

An almost para-cosymplectic manifold $\mathcal{M}$ with $\nabla\varphi=0$ is called para-cosymplectic. 
In this case the manifold $\mathcal{M}$ locally is a metric product of an open interval $(-a,a)$  and a para-K\"ahler manifold. For a point $p\in \mathcal{M}$ there is  a neighborhood 
$\mathcal{U}$ and a local isometry 
\[
 : \mathcal{U} \rightarrow (-a,a)\times (\mathcal{N}_p \cap \mathcal{U})^\circ,
\]
where $(\mathcal{N}_p\cap \mathcal{U})^\circ$ is a connected component of $\mathcal{N}_p\cap \mathcal{U}$ containing $p$,
$\mathcal{N}_p$ is a leaf passing through $p$. This local description implies that all leaves of $\mathcal{F}$ are para-K\"ahler manifolds \cite{Dtsu}.

In general manifolds with para-K\"ahler leaves are characterized by the identity \cite{Dtsu}:
\begin{equation}
\label{nabfi}
(\nabla_X\varphi)Y = g(A\varphi X, Y)\xi - \eta(Y)A\varphi X, \quad A = -\nabla\xi.
\end{equation}
There are many examples where a manifold has para-K\"ahler leaves and is not para-cosymplectic. As a consequence of (\ref{nabfi}) we have the following curvature identity 
\[
 \begin{array}{rcl}
R(X,Y)\varphi Z - \varphi R(X,Y)Z &=& g(A\varphi X, Z)AY-g(A\varphi Y, Z)AX + g(AX,Z)A\varphi Y \\
&& - g(AY,Z)A\varphi X -g(R(X,Y)\xi,\varphi Z)\xi-\eta(Y)\varphi R(X,Y)\xi.
 ×
\end{array}
\]

In the contrary to the Riemannian case of almost cosymplectic structures \cite{GY} the condition 
\[
 [R(X,Y),\phi] = 0 
\]
does not imply that the manifold is para-cosymplectic. 
Manifolds satisfying $[R(X,Y),\phi]=0$ are called weakly para-cosymplectic.

\begin{theorem} {\rm(\cite{Dtsu})}
Let $\mathcal{M}$ be an almost para-cosymplectic manifold with para-K\"ahlerian leaves.  $\mathcal{M}$ is weakly para-cosymplectic if and only if the following conditions are satisfied:
\begin{equation}
\label{naba}
(\nabla_XA)Y=(\nabla_YA)X, \quad \text{\rm ($A$ is a Codazzi tensor)}
\end{equation}
and at any point $A$ is of the kind (a), (b) or (c)   
\[
 \begin{array}{l}
 \textrm{(a)}\;\; A = 0, \\[+4pt]
 \textrm{(b)}\;\; AX = \pm g(X,V)V, \varphi V = \pm V,    \\[+4pt]
 \textrm{(c)}\;\; AX = \varepsilon_1 g(X,V_1)V_1 + \varepsilon_2 g(X, V_2),  \quad g(V_1,V_2) = 0, 
                   \quad \varepsilon_1 = \varepsilon_2 = \pm 1,\\[+2pt]
            \;\;\;\;\;\;\;  \phi V_1 = -V_1 , \quad \phi V_2 = V_2,                
 ×
\end{array}
\]
\end{theorem}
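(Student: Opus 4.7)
\smallskip
My plan is to take the six-term curvature identity displayed immediately after (\ref{nabfi}), apply the weakly para-cosymplectic hypothesis $R(X,Y)\varphi Z=\varphi R(X,Y)Z$ to kill its left-hand side, and read the two announced conclusions off the $\xi$ and $\ker\eta$ components of the resulting tensorial relation. Two consequences of (\ref{nabfi}) are used throughout: $A\xi=0$, which follows from $g(\xi,\xi)=1$ and the self-adjointness of $A$ (the latter from $d\eta=0$); and $\varphi A+A\varphi=0$, obtained by inserting $Y=\xi$ into (\ref{nabfi}) and using $\nabla_X\xi=-AX$. Together these say $A$ annihilates $\xi$ and interchanges the eigendistributions $\mathcal{D}_{+1}$ and $\mathcal{D}_{-1}$ of $\varphi$.

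\smallskip
For the Codazzi condition (\ref{naba}) I restrict to $X,Y\in\ker\eta$. The $\eta(Y)$-term drops out, and the $\xi$-component becomes $g(R(X,Y)\xi,\varphi Z)=0$ for every $Z$. Since $\varphi$ is a bijection of $\ker\eta$ and $g(R(X,Y)\xi,\xi)=0$ by antisymmetry of $R$, this gives $R(X,Y)\xi=0$. A direct calculation with $A=-\nabla\xi$ yields $R(X,Y)\xi=(\nabla_YA)X-(\nabla_XA)Y$, proving (\ref{naba}) on $\ker\eta$. The case involving $\xi$ follows from specializing the original identity to $Y=\xi$, which forces $R(X,\xi)\xi=0$ and hence $(\nabla_\xi A)X=A^2X=(\nabla_XA)\xi$.

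\smallskip
For the algebraic form of $A$ I look at the $\ker\eta$ part of the identity with $X,Y\in\mathcal{D}_{+1}$. Then $\varphi X=X$, $\varphi Y=Y$, four of the six terms collapse, and the identity reduces to $g(AX,Z)\,AY=g(AY,Z)\,AX$ for all $Z$. This forces $A$ restricted to $\mathcal{D}_{+1}$, viewed as a map into $\mathcal{D}_{-1}$, to have rank at most one; so $AX=\mu(X)\,W$ for some fixed $W\in\mathcal{D}_{-1}$ and functional $\mu$ on $\mathcal{D}_{+1}$. Self-adjointness of $A$ then forces $\mu(X)=c\,g(W,X)$, and absorbing $|c|$ into a rescaled $V_1\in\mathcal{D}_{-1}$ and the sign of $c$ into $\varepsilon_1\in\{\pm 1\}$ gives $AX=\varepsilon_1 g(X,V_1)V_1$ on $\mathcal{D}_{+1}$. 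The symmetric argument on $\mathcal{D}_{-1}$ yields $V_2\in\mathcal{D}_{+1}$ with sign $\varepsilon_2$; adding the two contributions produces the formula in (c), and cases (a), (b) are the degenerate subcases in which both or exactly one of $V_1,V_2$ vanishes.

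\smallskip
The main obstacle I foresee is the pointwise normalization in case (c)---the orthogonality $g(V_1,V_2)=0$ and the sign coincidence $\varepsilon_1=\varepsilon_2$. The cross-specialization $X\in\mathcal{D}_{+1},Y\in\mathcal{D}_{-1}$ of the identity is automatic, so these constraints cannot be extracted from its pure algebraic content; they must instead be deduced by relating the two rank-one pieces via the index and rank restrictions $r(A)\leqslant 2$ described before the theorem. The converse implication is then a direct verification: substituting each of the forms (a), (b), (c) together with (\ref{naba}) into the six-term identity, every term on the right cancels and $[R(X,Y),\varphi]=0$.
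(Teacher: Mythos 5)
Your outline is sound up to the point you yourself flag. The preliminary identities $A\xi=0$ and $\varphi A+A\varphi=0$, the extraction of $R(X,Y)\xi=0$ and hence the Codazzi condition (\ref{naba}), and the rank-one analysis on each eigendistribution (including the self-adjointness step $\mu(X)=c\,g(W,X)$, which works because $g$ pairs $\mathcal{D}_{+1}$ and $\mathcal{D}_{-1}$ nondegenerately) are all correct; note only that $R(X,Y)\xi=0$ for \emph{all} $X,Y$ follows at once by putting $Z=\xi$ in $R(X,Y)\varphi Z=\varphi R(X,Y)Z$ (exactly as in Proposition \ref{phi-symm}), so the detour through $\ker\eta$ and the separate $Y=\xi$ case is unnecessary. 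The converse direction is also fine: Codazzi gives $R(X,Y)\xi=0$, and the rank conditions alone annihilate the four $A$-terms of the identity following (\ref{nabfi}). Bear in mind the paper contains no proof of this theorem (it is imported from \cite{Dtsu}), so the comparison can only be with what a complete proof must contain.

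The genuine gap is the item you acknowledge and then wave away: the orthogonality $g(V_1,V_2)=0$ in case (c). It is equivalent to $A^2=0$, since $A^2X=\varepsilon_1\varepsilon_2\,g(V_1,V_2)\bigl(g(X,V_1)V_2+g(X,V_2)V_1\bigr)$, and this identity is exactly what the paper records after the theorem and uses essentially later (Lemma \ref{Anull} and all of Section \ref{secCon}). As you correctly observe, the cross-specialization of the six-term identity is vacuous, so no pointwise relation you have extracted ($A$ symmetric, $A\xi=0$, $A\varphi=-\varphi A$, the two rank-$\leqslant 1$ conditions) rules out $AX=g(X,V_1)V_1+g(X,V_2)V_2$ with $g(V_1,V_2)\neq 0$; hence the orthogonality must be produced by a further, necessarily differential, argument exploiting the Codazzi condition and covariant derivatives of $A$ (as is done in \cite{Dtsu}). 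Your proposed remedy --- ``relating the two rank-one pieces via the index and rank restrictions $r(A)\leqslant 2$ described before the theorem'' --- is circular, because that parabolic/elliptic/hyperbolic classification of $g(A\cdot,\cdot)$ is itself a restatement of the theorem's conclusion, not an independent input. Finally, you should not try to prove the ``sign coincidence'' $\varepsilon_1=\varepsilon_2$: in the author's notation (compare the proof of Theorem \ref{Ashape}, where $\sigma=\varepsilon_1\varepsilon_2=\pm1$ distinguishes elliptic from hyperbolic) the phrase ``$\varepsilon_1=\varepsilon_2=\pm1$'' only means that each sign is $\pm1$; a literal coincidence would exclude the hyperbolic manifolds studied throughout the paper.
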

At the points where $A=0$ the covariant derivative $\nabla\varphi$ vanishes. Independently 
of the algebraic form we  have always the identity $A^2 = 0$; at each point $A\neq 0$ induces a 2-step nilpotent endomorphism of tangent space. From the definition 
\[
  g(AX,Y) = -(\nabla_X\eta)(Y),
\]
hence the form $g(AX,Y)$ is symmetric; the properties of this form give rise to the classification: 
 $\mathcal{M}$ is called
\begin{itemize}
 \item[b)] parabolic $\iff$ $g(AX,Y) = \varepsilon \theta(X)\theta(Y)$, 
 \item[c)] elliptic  $\iff$ $g(AX,Y) = \varepsilon (\theta^1(X)\theta^1(Y)+\theta^2(X)\theta^2(Y))$, 
 \item[d)] hyperbolic $\iff$ $g(AX,Y) = \varepsilon (\theta^1(X)\theta^1(Y)-\theta^2(X)\theta^2(Y))$.
\end{itemize}
As we see the classification follows the pattern of the classification of the quadratic forms on a 2-plane. Here $\theta^1(X) =g (X,V_1)$, $\theta^2(X) = g(X,V_2)$ and $V_1$, $V_2$ are as in the above Theorem.

\subsection{The case of the dimension 3}
Here we provide a short overview of a geometry of 3-dimensional almost para-cosymplectic manifolds. Note that 
in the dimension 3 the leaves are always para-K\"ahler: arbitrary 3-dimensional manifold has para-K\"ahler leaves.
Those manifolds which are additionally weakly para-cosymplectic are classified and studied in \cite{DO}. These 
manifolds possess many interesting properties. For example the scalar curvature $r$ determines completely the curvature operator: for the Riemann curvature $R$ and the Ricci tensor $S$ we have   
\[
 \begin{array}{ll}
 R(X,Y) = (r/2)\varPhi(X,Y)\phi,  \quad & \text{(the curvature)} \\
 S(X,Y) = (r/2)(g(X,Y)-\eta(X)\eta(Y)), \quad & \text{(the Ricci tensor)} \\
\end{array}
\]
Due to the fact that $\mathcal{M}$ is three-dimensional there are no elliptic or hyperbolic points on $\mathcal{M}$: the rank of $A$, $r(A)$ is $\leqslant 1$.  If we assume that $\mathcal{M}$ is essentially weakly para-cosymplectic then 
$r(A)=1$ everywhere. Near each point $p\in \mathcal{M}$ there is an isotropic (null) vector field $V$ (determined up to the sign), such that 
\[
 AX = \varepsilon g(X,V)V, \quad \varepsilon =\pm 1.
\]
 The vector field $V$ is recurrent $\nabla_XV = \tau(X)V$ and  the  form $\tau$ determines the scalar curvature
\[
 r = d\beta(V), \quad \tau(X) = \beta g(X,V).
\]
A globally defined one-dimensional isotropic distribution spanned by  $V$  is isotropic and parallel. Thus $\mathcal{M}$ is a Walker space: the pseudo-Riemannian manifold with parallel isotropic distribution. Locally 
the structure of $\mathcal{M}$ is described with a use of a particular coordinates chart $(x,y,z)$
\[
 \begin{array}{l}
 \xi = \partial_z, \quad \eta = dz, \\[+4pt]
  \varphi\;\partial_x = \epsilon_2\;\partial_x, \quad \\ [+4pt]
\varphi\;\partial_y = 2\epsilon_2(b(x,y)-\epsilon_1z)\partial_x - \epsilon_2\;\partial_y, \quad \\ [+4pt]
g=\begin{pmatrix}
 0 & 1 & 0 \\
 1 & 2(b(x,y)-\epsilon_1z) & 0 \\
 0 & 0 & 1
  \end{pmatrix}
,\\[16pt]
 \epsilon_i = \pm 1,\quad i=1,2,
\end{array}
\] 
and the scalar curvature 
\[
    r(x,y,z) =2\partial^2_x b.
\]
A locally Riemannian homogeneous manifold $\mathcal{M}$ either is para-cosymplectic and leaves are para-K\"ahler surfaces of constant curvature,  or in the  case $\nabla\varphi \neq 0$,  the universal covering 
$\widetilde{\mathcal{M}}$ is a Lie group with a  left-invariant almost para-contact metric structure $(\widetilde\varphi,\widetilde\xi,\widetilde\eta,\widetilde g)$: $(\widetilde{ \mathcal{M}}, \widetilde\varphi,\widetilde\xi, \widetilde\eta, \widetilde g)$  and $(\mathcal{M},\varphi,\xi, \eta, g)$ are locally isomorphic as almost para-contact metric manifolds. The metric $g$ on $\mathcal{M}$ necessary is locally flat. Therefore the covering $(\widetilde{\mathcal{M}}, \widetilde g)$ is pseudo-Euclidean. The 
underlying  Lie group $\widetilde{\mathcal{M}}$ is either  Heisenberg group $\mathbb{H}^3$ or a unique non-unimodular Lie group $\mathbb{A}^3$ \cite{DO}. The other consequence is that an  essentially weakly para-cosymplectic manifold with non-zero curvature is irreducible as a pseudo-Riemannian manifold. Hence there are no weakly para-cosymplectic three-manifolds of a non-zero constant sectional curvature. Concerning the last statement note that in the dimensions $\geqslant 5$ we have a much stronger result \cite{Dtsu}: if $\mathcal{M}$ is arbitrary almost para-cosymplectic manifold, $\textrm{dim}(\mathcal{M})\geqslant 5$, of a constant sectional curvature $K$, then $K=0$ and $\mathcal{M}$ is locally 
flat para-cosymplectic manifold.

\section{Curvature identities. The Ricci form}
\label{secCurv}
In this section we establish basic curvatures identities for a weakly para-cosymplectic manifold, define a Ricci form and prove that the Ricci form is closed if the manifold has para-K\"ahler leaves. Moreover we shall prove that conformally flat weakly 
para-cosymplectic manifolds in dimensions $\geqslant 5$ are locally flat.
Throughout this section and further $R(X,Y)Z$ denotes the Riemann curvature operator of the Levi-Civita connection 
\[
 R(X,Y)Z = \nabla_X\nabla_YZ -\nabla_Y\nabla_XZ-\nabla_{[X,Y]}Z,
\]
$R(X,Y,Z,W)$ denotes the Riemann-Christoffel curvature 
\[
 R(X,Y,Z,W) = g(R(X,Y)Z,W),
\]
$Ric$ the Ricci tensor
\[
 Ric(X,Y) = Tr\lbrace Z\mapsto R(Z,X)Y\rbrace,
\]
and $Q$ the Ricci operator defined by the equation 
\[
     Ric(X,Y) = g(QX,Y).
\]
\begin{proposition}
\label{phi-symm}
 Let $\mathcal{M}$ be a weakly para-cosymplectic manifold. We have the following identities
\begin{equation}
 \begin{array}{l}
\varphi R(X,Y)Z =  R(X,Y)\varphi Z, \\
R(\varphi X,\varphi Y)Z =  -R(X,Y)Z, \\
R(\varphi X,Y)Z = - R(X, \varphi Y)Z, \\
R(X,Y)\xi = R(\xi,X)Y = 0, 
 ×
\end{array}
\end{equation}
for the Riemann-Christoffel curvature
\begin{equation}
 \begin{array}{l}
R(\varphi X, Y, Z, W) = R(X, Y, \varphi Z, W) = -R(X,\varphi Y, Z, W) = -R(X, Y, Z, \varphi W), \\
R(\varphi X, \varphi Y, Z, W) = R(X,Y,\varphi Z, \varphi W) = -R(X,Y,Z,W),\\
R(\xi,X,Y,Z) = R(X,\xi,Y,Z) = R(X,Y,\xi,Z) = R(X,Y,Z,\xi) = 0,
 ×
\end{array}
\end{equation}
and for the Ricci tensor
\begin{equation}
 \begin{array}{l}
Ric(\varphi X, Y) = - Ric( X,\varphi Y), \\
Ric(\varphi X, \varphi Y) = -Ric(X,Y),\\
Ric(X,\xi) = 0,
 ×
\end{array}
\end{equation}
in all the above formulas $X$, $Y$, $Z$, $W$ are arbitrary vector fields on $\mathcal{M}$.
\end{proposition}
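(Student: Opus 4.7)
The plan is to derive all three groups of identities using four ingredients: the defining hypothesis $[R(X,Y),\varphi]=0$; the $g$-skewness $g(\varphi U,V)=-g(U,\varphi V)$ that is forced by $\Phi(X,Y)=g(\varphi X,Y)$ being a $2$-form; the standard symmetries of the Riemann--Christoffel tensor (skew in each pair, exchange of pairs, first Bianchi); and the Codazzi property of $A=-\nabla\xi$ furnished by the quoted theorem.

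First I would dispose of the $\xi$--identities. Using $\nabla\xi=-A$ and unpacking the definition of $R$,
\[
R(X,Y)\xi=(\nabla_Y A)X-(\nabla_X A)Y,
\]
which vanishes by $(\nabla_XA)Y=(\nabla_YA)X$. Pair exchange of the Riemann tensor then gives
\[
R(X,\xi,Y,Z)=R(Y,Z,X,\xi)=-g(R(Y,Z)\xi,X)=0,
\]
so $R(X,\xi)=R(\xi,X)=0$ as operators, and tracing furnishes $Ric(X,\xi)=0$.

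Next the $\varphi$--identities. Translating the hypothesis $R(X,Y)\varphi Z=\varphi R(X,Y)Z$ into the Riemann--Christoffel tensor using the $g$-skewness of $\varphi$ gives $R(X,Y,\varphi Z,W)=-R(X,Y,Z,\varphi W)$; applying pair exchange immediately transfers the symmetry to the first pair, producing $R(\varphi X,Y,Z,W)=-R(X,\varphi Y,Z,W)$, i.e.\ $R(\varphi X,Y)=-R(X,\varphi Y)$ as operators. The type--changing identity
\[
R(\varphi X,\varphi Y)Z=-R(X,\varphi^{2}Y)Z=-R(X,Y)Z+\eta(Y)R(X,\xi)Z=-R(X,Y)Z
\]
then follows together with $\varphi^{2}=Id-\eta\otimes\xi$ and the vanishing $R(X,\xi)=0$, and the same argument from the other end yields $R(X,Y,\varphi Z,\varphi W)=-R(X,Y,Z,W)$. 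Tracing in a pseudo--orthonormal frame adapted to the splitting $T\mathcal{M}=\langle\xi\rangle\oplus\mathcal{D}_{-1}\oplus\mathcal{D}_{+1}$, on which $\varphi$ preserves each isotropic subbundle and annihilates $\xi$, transports the operator identities to $Ric(\varphi X,Y)=-Ric(X,\varphi Y)$ and $Ric(\varphi X,\varphi Y)=-Ric(X,Y)$ at once.

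The main technical point is the sign bookkeeping when moving $\varphi$ between the four slots of the Riemann--Christoffel tensor. Once $R(X,Y)\xi=0$ and the operator commutation $R(X,Y)\varphi=\varphi R(X,Y)$ are both in hand, each remaining equality in the displayed chains is reached by at most two applications of pair exchange and the $g$-skewness of $\varphi$; there is no genuine difficulty beyond organizing the symmetries so that pair exchange and the hypothesis produce matching signs in every slot.
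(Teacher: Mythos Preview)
Your derivation of $R(X,Y)\xi=0$ has a logical gap. You invoke the Codazzi property $(\nabla_XA)Y=(\nabla_YA)X$ ``furnished by the quoted theorem,'' but that theorem carries the extra hypothesis of para-K\"ahlerian leaves, which the present Proposition does \emph{not} assume: it is stated for an arbitrary weakly para-cosymplectic manifold. So as written, your proof of the $\xi$-identities is valid only in the restricted class, not in the generality claimed. The paper avoids this by a one-line argument using only the defining hypothesis: from $\varphi R(X,Y)Z=R(X,Y)\varphi Z$ with $Z=\xi$ one gets $\varphi R(X,Y)\xi=0$, and then applying $\varphi$ once more together with $\varphi^2=Id-\eta\otimes\xi$ and $\eta(R(X,Y)\xi)=g(R(X,Y)\xi,\xi)=0$ forces $R(X,Y)\xi=0$. (Incidentally this \emph{then} implies Codazzi, since $R(X,Y)\xi=(\nabla_YA)X-(\nabla_XA)Y$ always; but that is output, not input.)

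Apart from this, your route coincides with the paper's: once $R(X,Y)\xi=0$ is in hand, both proofs use the $g$-skewness of $\varphi$ to turn $R(X,Y)\varphi Z=\varphi R(X,Y)Z$ into $R(X,Y,\varphi Z,W)=-R(X,Y,Z,\varphi W)$, then pair exchange to move the sign relation to the first pair, and $\varphi^2=Id-\eta\otimes\xi$ together with the $\xi$-vanishing to get the double-$\varphi$ identities. Your tracing argument for the Ricci identities in an adapted frame is fine. The fix is simply to replace the Codazzi step by the two-line direct argument above; the rest of your outline then goes through unchanged and matches the paper's approach.
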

\begin{proof}
  From the definition
\[
 \varphi R(X,Y)Z = R(X,Y)\varphi Z,
\]
setting $Z=\xi$ we find $\varphi R(X,Y)\xi = 0$ hence
\[
 0 = \varphi^2 R(X,Y)\xi = R(X,Y)\xi -\eta(R(X,Y)\xi)\xi = R(X,Y)\xi, 
\]
note that $\eta(R(X,Y)\xi) = g(R(X,Y)\xi,\xi) = R(X,Y,\xi,\xi)=0$. Projecting onto $Z$ 
\[
 0 = g(R(X,Y)\xi, Z) = R(X,Y,\xi,Z), 
\]
the symmetries of the Riemann-Christoffel curvature follow
\[
 0 = R(\xi, Z, X,Y) = R(Z, \xi, X,Y) = R(X, Y, Z ,\xi ) = 0,
\]
in  consequence $R(\xi,Z)X = 0$. Now
\[
 \varphi R(X,Y)\varphi Z = R(X,Y)\varphi^2 Z = R(X,Y)Z -\eta(Z)R(X,Y)\xi = R(X,Y)Z, 
\]
projecting both hands onto $W$ we find
\[
   R(X,Y,\varphi Z, \varphi W) = - g(\varphi R(X,Y)\varphi  Z,W) = - R(X,Y,Z,W),
\]
 we substitute $W=\varphi W'$ in the last expression
\[
 R(X, Y, \varphi Z, W') =  - R(X, Y, Z , \varphi W'),
\]
\end{proof}
\begin{corollary}
 A 2-form 
\[
 \rho(X,Y) = Ric(\varphi X, Y),
\]
will be called the Ricci form
\end{corollary}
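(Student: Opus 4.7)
The plan is to verify that the bilinear expression $\rho(X,Y)=Ric(\varphi X,Y)$ is skew-symmetric in $X,Y$, which together with its obvious tensoriality in each slot (inherited from $Ric$ and $\varphi$) shows it is a $2$-form. The two ingredients required are the usual symmetry $Ric(X,Y)=Ric(Y,X)$ of the Ricci tensor of the Levi-Civita connection and the $\varphi$-antisymmetry $Ric(\varphi X,Y)=-Ric(X,\varphi Y)$ already established in Proposition \ref{phi-symm}.

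Combining these, I would write
\[
\rho(Y,X)=Ric(\varphi Y,X)=Ric(X,\varphi Y)=-Ric(\varphi X,Y)=-\rho(X,Y),
\]
where the second equality uses symmetry of $Ric$ and the third uses the identity from Proposition \ref{phi-symm}. This is exactly the desired skew-symmetry, and the corollary is established.

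There is no real obstacle here; the content is just the observation that twisting a symmetric $(0,2)$-tensor by a $\varphi$-antisymmetric operator produces a $2$-form. It is worth flagging two further consequences which are immediate from the same Proposition and which will presumably be invoked later when proving $d\rho=0$: first, evaluating either argument on $\xi$ gives $\rho(\xi,\cdot)=\rho(\cdot,\xi)=0$, using $Ric(X,\xi)=0$ and $\varphi\xi=0$ (which follows from the almost para-contact axioms and the eigenspace decomposition $T\mathcal{M}=\langle\xi\rangle\oplus\mathcal{D}_{-1}\oplus\mathcal{D}_{+1}$); second, the identity $Ric(\varphi X,\varphi Y)=-Ric(X,Y)$ translates, after using $\varphi^2=Id-\eta\otimes\xi$ and $Ric(\xi,\cdot)=0$, into $\rho(\varphi X,\varphi Y)=-\rho(X,Y)$, so $\rho$ has the same $\varphi$-type behaviour as the fundamental form $\varPhi$.
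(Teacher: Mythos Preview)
Your argument is correct and is exactly the intended one: the paper states this corollary without proof, the content being precisely that the identity $Ric(\varphi X,Y)=-Ric(X,\varphi Y)$ from Proposition~\ref{phi-symm} combined with the symmetry of $Ric$ makes $\rho$ skew-symmetric. Your additional remarks on $\rho(\xi,\cdot)=0$ and the $\varphi$-behaviour of $\rho$ are also correct and consistent with how the paper uses $\rho$ later.
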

\begin{proposition}
 For a weakly para-cosymplectic manifold $\mathcal{M}$ with para-K\"ahler leaves the Ricci form is closed
\[
 d\rho =0.
\]
\end{proposition}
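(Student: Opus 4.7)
The plan is to express the Ricci form as a trace of an endomorphism and then exploit the second Bianchi identity together with the explicit form of $\nabla\varphi$.

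First I would verify, using the curvature symmetries in Proposition \ref{phi-symm} and the first Bianchi identity, that
\[
 \rho(X,Y) \;=\; \tfrac{1}{2}\,\trace\!\bigl(\varphi\circ R(X,Y)\bigr).
\]
To see this, rewrite $\rho(X,Y)=Ric(\varphi X,Y)=-Ric(X,\varphi Y)$ in a pseudo-orthonormal frame $\{e_i\}$ with $g(e_i,e_i)=\epsilon_i$, move $\varphi$ past $R$ via $\varphi R=R\varphi$, use skew-adjointness $g(\varphi U,V)=-g(U,\varphi V)$, and apply the first Bianchi identity to collapse the resulting sum. The endomorphism $\varphi\circ R(X,Y)$ is well-defined because $R(X,Y)$ commutes with $\varphi$.

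Next, I would compute $d\omega$ for $\omega:=2\rho=\trace(\varphi\circ R(\cdot,\cdot))$, using the formula $d\omega(X,Y,Z)=\sum_{\rm cyc}(\nabla_X\omega)(Y,Z)$ that holds for any 2-form with respect to a torsion-free connection. Since trace commutes with covariant differentiation,
\[
 (\nabla_X\omega)(Y,Z) \;=\; \trace\!\bigl((\nabla_X\varphi)\circ R(Y,Z)\bigr) \;+\; \trace\!\bigl(\varphi\circ(\nabla_X R)(Y,Z)\bigr).
\]
The cyclic sum of the second piece vanishes because of the second Bianchi identity $\sum_{\rm cyc}(\nabla_X R)(Y,Z)=0$, and trace is linear. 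It therefore remains to kill the cyclic sum of the first piece.

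For this last step I would substitute the para-K\"ahler-leaves identity (\ref{nabfi}), namely $(\nabla_X\varphi)V=g(A\varphi X,V)\xi-\eta(V)A\varphi X$. Plugging this into the trace and using $\sum_i \epsilon_i g(\xi,e_i)e_i=\xi$ and $\sum_i \epsilon_i g(A\varphi X,e_i)e_i=A\varphi X$, one finds
\[
 \trace\!\bigl((\nabla_X\varphi)\circ R(Y,Z)\bigr) \;=\; g\bigl(A\varphi X,R(Y,Z)\xi\bigr) \;-\; g\bigl(R(Y,Z)A\varphi X,\xi\bigr),
\]
and both terms vanish by Proposition \ref{phi-symm} because $R(Y,Z)\xi=0$ and $R(Y,Z,\cdot,\xi)=0$. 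Notably this happens term by term, so the cyclic sum is also zero, giving $d\omega=0$ and hence $d\rho=0$.

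The main conceptual obstacle is the first step: recognizing $\rho$ as the $\varphi$-trace of the curvature operator. Once this Chern-form-type description is in place, the proof mirrors the classical K\"ahler argument, with the twist that $\nabla\varphi$ does not vanish. The saving grace is that $\nabla\varphi$ takes values in the "$\xi$-part" of the tangent bundle, and the curvature is trivial in that direction, so the defect term contributes nothing.
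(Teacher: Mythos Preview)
Your argument is correct. Both you and the paper follow the same overall strategy—write $\rho$ as a trace involving $\varphi$ and $R$, differentiate under the trace, kill the $\nabla\varphi$ contribution via (\ref{nabfi}) and $R(\cdot,\cdot)\xi=0$, and finish with Bianchi—but the trace representations differ in a way that matters. The paper keeps $\varphi$ in an \emph{argument slot}, writing $\rho(Y,Z)=\trace\{X\mapsto -R(\varphi X,Y)Z\}$; after disposing of the $\nabla\varphi$ term it must still invoke the first Bianchi identity on $\nabla_W R$ and then the second Bianchi to obtain the symmetry $d\rho(W,Y,Z)=d\rho(W,Z,Y)$. Your Chern-form expression $\rho=\tfrac12\trace(\varphi\circ R(\cdot,\cdot))$ places $\varphi$ \emph{outside} the curvature operator, so the second Bianchi identity $\sum_{\rm cyc}(\nabla_X R)(Y,Z)=0$ kills the curvature term in one stroke, with no further manipulation needed. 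The price you pay is the preliminary verification of the trace formula, which itself needs the first Bianchi identity together with the $\varphi$-symmetries of Proposition~\ref{phi-symm}; you flag this correctly as the main step. Net effect: your route is the cleaner, more conceptual K\"ahler-style argument, while the paper's is more hands-on but avoids the initial rewriting.
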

\begin{proof}
 The idea of the proof is to use the identity
\[
 \sum\limits_{X,Y,Z} (\nabla_X\rho)(Y,Z) = 3 d\rho(X,Y,Z),
\]
here on  the left side $\sum\limits_{X,Y,Z}$ denotes the cyclic sum over the entries $X$,$Y$,$Z$.
We have 
\[
 \rho(Y,Z) = \textrm{Tr}\{X \mapsto R(X,\varphi Y)Z\} = \textrm{Tr}\{ X \mapsto -R(\varphi X, Y)Z \}, 
\]
and
\[
 (\nabla_W\rho)(Y,Z) = \textrm{Tr}\{X \mapsto -(\nabla_WR)(\varphi X,Y)Z - R((\nabla_W\varphi)X,Y)Z\}. 
\]
By (\ref{nabfi})
$
 R((\nabla_W\varphi)X,Y)Z = -\eta(X)R(A\varphi W,Y)Z
$
and
\[
 \textrm{Tr}\{X \mapsto \eta(X)R(A\varphi W,Y)Z \} =  \eta(R(A\varphi W, Y)Z) = 0,
\]
therefore for the covariant derivative $\nabla\rho$ we obtain 
\[
 (\nabla_W\rho)(Y,Z) = \textrm{Tr}\{X \mapsto - (\nabla_WR)(\varphi X, Y)Z\}.
\]

The first Bianchi identity applied to $\nabla_W R$ 
\[
 (\nabla_WR)(\varphi X, Y)Z = -(\nabla_WR)(Y,Z)\varphi X - (\nabla_WR)(Z,\varphi X)Y,
\]
and the second Bianchi identity follow
\[
 \sum\limits_{W,Y,Z}(\nabla_WR)(\varphi X, Y)Z = \sum\limits_{W, Z,Y}(\nabla_WR)(\varphi X,Z)Y, \qquad \text{(the cyclic sums)}
\]
taking the traces $\textrm{Tr}\{X \mapsto \cdot\}$ of respectively left and right hands we find
\[
 d\rho(W,Y,Z) = d\rho(W,Z,Y),
\]
 as $Y$ and $Z$ are arbitrary $d\rho = 0$ identically.
 \end{proof}

An almost para-cosymplectic manifold $\mathcal{M}$ is called $\eta$-Einstein if the Ricci 
tensor  satisfies
\[
 Ric(X,Y) = \dfrac{r}{2n}( g(X,Y)-\eta(X)\eta(Y)), 
\]
$r$ is the scalar curvature $r$, $dim(\mathcal{M}) = 2n+1$.

\begin{theorem}
\label{scEtaEin}
For  $\eta$-Einstein, weakly para-cosymplectic manifold $\mathcal{M}$ with para-K\"ahler leaves and $dim(\mathcal{M}) \geqslant 5$, the scalar curvature is constant
\[
   r = r_0.
\]
If the Ricci tensor is parallel with respect to the Levi-Civita connection 
\[
 \nabla Ric =0,
\]
then $\mathcal{M}$ is para-cosymplectic or Ricci-flat $Ric =0$.
\end{theorem}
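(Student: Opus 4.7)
The plan is to combine the two facts established in the previous results: the closedness of the Ricci form, and the explicit form of $\rho$ under the $\eta$-Einstein hypothesis.

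First I would compute the Ricci form explicitly. Since $Ric(X,Y)=\frac{r}{2n}(g(X,Y)-\eta(X)\eta(Y))$, and since $\eta\circ\varphi=0$ (which follows from $\varphi\xi=0$, itself a consequence of $\varphi^2=Id-\eta\otimes\xi$ together with the identity $\eta(\varphi X)\xi=\eta(X)\varphi\xi$ obtained by iterating $\varphi$), one gets
\[
\rho(X,Y) = Ric(\varphi X,Y) = \frac{r}{2n}\,\varPhi(X,Y).
\]
Applying the exterior derivative, using $d\rho=0$ from the preceding proposition and $d\varPhi=0$ from the almost para-cosymplectic condition, yields
\[
0 = d\rho = \frac{1}{2n}\,dr\wedge\varPhi.
\]

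The crux is to extract $dr=0$ from $dr\wedge\varPhi=0$; this is exactly where $\dim\mathcal{M}\geqslant 5$ (that is, $n\geqslant 2$) enters. I would decompose $dr = (\xi r)\,\eta + \alpha$ with $\iota_\xi\alpha=0$. Because $\iota_\xi\varPhi=0$, the term $(\xi r)\eta\wedge\varPhi$ is the only part of $dr\wedge\varPhi$ containing $\eta$ while $\alpha\wedge\varPhi$ is purely horizontal; hence both pieces must vanish separately. The first gives $\xi r=0$ (since $\varPhi\neq 0$), and the second, $\alpha\wedge\varPhi=0$, forces $\alpha=0$ by the standard symplectic-linear-algebra fact that wedging with a non-degenerate 2-form on a $2n$-dimensional space is injective on 1-forms provided $n\geqslant 2$ (in our situation $\varPhi$ restricted to $\mathcal{D}=\ker\eta$ is non-degenerate). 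Thus $dr=0$, so $r=r_0$ is constant.

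For the second assertion, I would simply differentiate the $\eta$-Einstein formula. Since $r$ is constant and $\nabla g=0$,
\[
(\nabla_X Ric)(Y,Z) = -\frac{r_0}{2n}\bigl[(\nabla_X\eta)(Y)\,\eta(Z) + \eta(Y)\,(\nabla_X\eta)(Z)\bigr].
\]
Assuming $\nabla Ric=0$ and $r_0\neq 0$, setting $Z=\xi$ and using the elementary identity $(\nabla_X\eta)(\xi)=0$ (which follows from $\eta(\xi)=1$ and $g(\xi,\xi)=1$) collapses the bracket to $(\nabla_X\eta)(Y)=0$, so $\nabla\eta=0$ and equivalently $A=-\nabla\xi=0$. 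Substituting $A=0$ into the characterizing identity (\ref{nabfi}) gives $\nabla\varphi=0$, i.e.\ $\mathcal{M}$ is para-cosymplectic; the remaining alternative is $r_0=0$, i.e.\ $\mathcal{M}$ is Ricci-flat.

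The main obstacle is really the algebraic step $dr\wedge\varPhi=0\Rightarrow dr=0$: one must be careful to treat $\varPhi$ as degenerate on $\mathcal{M}$ but non-degenerate on the contact distribution $\mathcal{D}$, and it is precisely here that the hypothesis $\dim\mathcal{M}\geqslant 5$ is indispensable (in dimension $3$ any 3-form is automatically zero and the argument collapses). The rest of the proof is essentially bookkeeping once the previous results of the section are in hand.
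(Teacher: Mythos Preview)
Your proof is correct and follows essentially the same route as the paper. The only organizational difference is that the paper isolates the implication $dr\wedge\varPhi=0\Rightarrow dr=0$ as a standalone lemma (if $\tau\wedge\omega=0$ with $\tau\neq 0$ and $\omega$ of maximal rank, then contracting with $V$ such that $\tau(V)\neq 0$ gives $\omega=\tau\wedge\beta$, hence $\omega^2=0$ and $\dim\mathcal{M}\leqslant 3$) and applies it directly on $\mathcal{M}$ without first splitting off the $\eta$-component; the underlying linear algebra is the same as yours, and the treatment of the second assertion is identical modulo the substitution $(\nabla_X\eta)(Y)=-g(AX,Y)$.
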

 We start the proof from the following algebraic lemma
\begin{lemma}
 Let $\omega$ be a 2-form of maximall rank at any point. If there is a non-zero 1-form 
$\tau$ such that
\[
 \tau\wedge\omega = 0,
\]
 then $dim(\mathcal{M}) \leqslant 3$. 
\end{lemma}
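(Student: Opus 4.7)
The plan is to reduce the claim to pointwise linear algebra. Since the conclusion concerns $\dim\mathcal{M}$, it suffices to argue at a single point $p\in\mathcal{M}$ where $\tau_p\neq 0$. In the setting of this paper $\dim\mathcal{M}=2n+1$, so a 2-form of maximal rank at $p$ has rank exactly $2n$, and standard linear algebra provides a coframe $\theta^1,\dots,\theta^{2n+1}$ of $T_p^*\mathcal{M}$ with
\[
\omega_p = \sum_{k=1}^{n}\theta^{2k-1}\wedge\theta^{2k}.
\]

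Writing $\tau_p=\sum_{i=1}^{2n+1}\tau_i\,\theta^i$, I would expand
\[
\tau_p\wedge\omega_p = \sum_{k=1}^{n}\sum_{i\notin\{2k-1,2k\}}\tau_i\,\theta^i\wedge\theta^{2k-1}\wedge\theta^{2k},
\]
and observe that the basis 3-forms $\theta^i\wedge\theta^{2k-1}\wedge\theta^{2k}$ indexed by the admissible pairs $(i,k)$ are pairwise linearly independent. Hence $\tau_p\wedge\omega_p=0$ forces $\tau_i=0$ for every $i\notin\{2k-1,2k\}$ and every $k\in\{1,\dots,n\}$. If $n\geqslant 2$, combining the conditions for $k=1$ and $k=2$ already yields $\tau_p=0$, contradicting the choice of $p$. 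Therefore $n\leqslant 1$, that is, $\dim\mathcal{M}\leqslant 3$.

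The only real step is the combinatorial bookkeeping ensuring that the basis 3-forms produced by different pairs $(i,k)$ are genuinely distinct; this is routine rather than a true obstacle. A slicker alternative is to invoke Cartan's division lemma: the hypothesis $\tau\wedge\omega=0$ with $\tau\neq 0$ implies $\omega=\tau\wedge\beta$ for some 1-form $\beta$, so $\omega$ is decomposable and has rank at most $2$ at $p$; the maximal rank assumption then again yields $2n\leqslant 2$.
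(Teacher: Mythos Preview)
Your proof is correct. Your main argument proceeds by choosing a Darboux coframe for $\omega_p$ and reading off the vanishing of the coefficients $\tau_i$ from the linear independence of the resulting basis 3-forms; this is valid and the combinatorial check that the triples $\{i,2k-1,2k\}$ are pairwise distinct is indeed routine. The paper, by contrast, takes exactly the route you sketch as your ``slicker alternative'': it contracts $\tau\wedge\omega=0$ with a vector $V$ satisfying $\tau(V)\neq 0$ to obtain $\omega=\tau\wedge\beta$, whence $\omega\wedge\omega=0$ and the rank bound follows. So your alternative \emph{is} the paper's proof. Your primary approach buys explicitness at the cost of a basis choice and a small bookkeeping step; the contraction argument is coordinate-free and immediately yields the decomposability of $\omega$, which is arguably the conceptual content of the lemma.
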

{\it Proof of the Lemma.} As $\tau\neq 0$ there is a vector field $V$ such that $\tau(V) = \iota_V\tau \neq 0$. The inner product
\[
 \iota_V(\tau\wedge\omega) = (\iota_V\tau)\omega-\tau \wedge\iota_V\omega,
\]
follows $\omega =  \tau\wedge\beta$ for a 1-form $\beta$ ( precisely $\beta$ is defined at points where $\iota_V\tau \neq 0$) and in a consequence $\omega^2 = 0$. As $\omega$ has maximall rank $dim(\mathcal{M}) < 4$.
\begin{proof}
By the assumption 
\[
 \rho(X,Y) = Ric(\varphi X,Y) = \dfrac{r}{2n}\varPhi(X,Y)
\]
 and as $\varPhi$ is closed
\[
 0 = d\rho = \dfrac{1}{2n}dr\wedge\varPhi.
\]
The fundamental form $\varPhi$ has maximall rank. If $dr \neq 0$  applying the Lemma we obtain $dim(\mathcal{M}) < 4$: the contradiction. Thus $dr$ must vanish which implies $r=r_0= const$. 
Assuming $\nabla Ric=0$ we obtain the following equivalent condition
\[
 0 = \dfrac{r}{2n}(g(AW,X)\eta(Y)+g(AW,Y)\eta(X)),
\]
setting $Y=\xi$ we find that $r g(AW,X) =0$ for arbitrary vector fields $W$, $X$, thus $r=0$ or $A = 0$ In the former case the manifold is Ricci-flat in the latter case the vanishing of $A$ follows $\nabla\varphi=0$ - the manifold is para-cosymplectic. 
\end{proof}

\begin{theorem}
 A locally conformally flat weakly para-cosymplectic manifold $\mathcal{M}$, $dim(\mathcal{M})=2n+1 \geqslant 5$ is
locally flat $R = 0$.  
\end{theorem}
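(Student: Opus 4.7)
The plan is to combine the vanishing of the Weyl tensor, forced by local conformal flatness together with $\dim \mathcal{M} \geqslant 4$, with the $\varphi$-compatibility identities from Proposition \ref{phi-symm}, and show that the two are only jointly consistent when $R=0$. Writing $m = 2n+1$, the Weyl-free decomposition of the curvature reads
\begin{equation*}
R(X,Y)Z = \tfrac{1}{m-2}\bigl(Ric(X,Z)Y - Ric(Y,Z)X + g(X,Z)QY - g(Y,Z)QX\bigr) - \tfrac{r}{(m-1)(m-2)}\bigl(g(X,Z)Y - g(Y,Z)X\bigr),
\end{equation*}
so all of $R$ is concentrated in the Ricci data.

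The first step would be to substitute $Z = \xi$ and apply $R(X,Y)\xi = 0$ and $Ric(\cdot,\xi) = 0$ from Proposition \ref{phi-symm}; specialising the resulting identity at $X = \xi$ and using $Q\xi = 0$ yields $QY = \tfrac{r}{m-1}(Y - \eta(Y)\xi)$. Hence $\mathcal{M}$ is automatically $\eta$-Einstein:
\begin{equation*}
Ric(X,Y) = \tfrac{r}{m-1}\bigl(g(X,Y) - \eta(X)\eta(Y)\bigr).
\end{equation*}
Substituting this Ricci back into the Weyl-zero formula collapses the entire curvature to one scalar $c = \tfrac{r}{(m-1)(m-2)}$, and in particular on horizontal vectors the curvature reduces to the constant-sectional-curvature expression $R(X,Y)Z = c\bigl(g(X,Z)Y - g(Y,Z)X\bigr)$.

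The last step uses the identity $R(X,Y,\varphi Z,\varphi W) = -R(X,Y,Z,W)$. Specialising to $Z, W \in \mathcal{D}_{-1}$, on which $\varphi$ acts as $-1$, the left hand side equals $R(X,Y,Z,W)$, so $R(X,Y,Z,W) = 0$ whenever $Z$ and $W$ both lie in $\mathcal{D}_{-1}$. Combined with the horizontal formula above, this forces $c\bigl(g(X,Z)g(Y,W) - g(Y,Z)g(X,W)\bigr) = 0$ for all $X, Y \in \mathcal{D}_{+1}$ and $Z, W \in \mathcal{D}_{-1}$. This is exactly where the dimension hypothesis enters: since $\dim \mathcal{M} \geqslant 5$ forces $n = \dim \mathcal{D}_{\pm 1} \geqslant 2$, one can choose $X, Y \in \mathcal{D}_{+1}$ linearly independent and dual to $Z, W \in \mathcal{D}_{-1}$ so that the bracket equals $1$, forcing $c = 0$. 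Then $r = 0$, $Ric = 0$, and the Weyl-zero formula immediately yields $R = 0$. The main obstacle is precisely this final non-degeneracy step—the algebraic equation degenerates for $n=1$, consistent with the much richer three-dimensional picture described in the preliminaries; the earlier steps are essentially routine.
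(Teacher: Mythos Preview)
Your argument is correct. The first two steps---using the Weyl-free decomposition together with $R(X,Y)\xi=0$ and $Ric(\xi,\cdot)=0$ to force the $\eta$-Einstein condition $Ric=\tfrac{r}{2n}(g-\eta\otimes\eta)$, and then collapsing the horizontal curvature to the constant-curvature form with factor $c=\tfrac{r}{(m-1)(m-2)}$---coincide exactly with what the paper does. (A minor remark: your displayed decomposition formula carries an overall sign opposite to the paper's convention, but since you only use it through the equation $R(X,Y)\xi=0$ and then again through a vanishing condition, the sign never matters.)

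Where you diverge from the paper is in the final step. The paper substitutes $(\varphi X,\varphi Y)$ into the conformally-flat $\eta$-Einstein curvature, obtains $R(X,Y,Z,W)=-\tfrac{r}{2n(2n-1)}(\varPhi(Y,Z)\varPhi(X,W)-\varPhi(Y,W)\varPhi(X,Z))$ from the identity $R(\varphi X,\varphi Y,\cdot,\cdot)=-R(X,Y,\cdot,\cdot)$, and then applies the first Bianchi identity to conclude $r\,\varPhi\wedge\varPhi=0$, hence $r=0$ since $\varPhi$ has maximal rank in dimension $\geqslant 5$. You instead use the companion identity $R(X,Y,\varphi Z,\varphi W)=-R(X,Y,Z,W)$ specialised to $Z,W\in\mathcal{D}_{-1}$, which kills $R$ on that eigenspace outright, and then read off $c=0$ by evaluating on a dual pair of bases of $\mathcal{D}_{+1}$ and $\mathcal{D}_{-1}$ (exactly the frame of Proposition~\ref{locfrgen}). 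Your route is slightly more elementary---no Bianchi identity, no exterior algebra---while the paper's is more invariant in flavour, packaging the dimension obstruction as $\varPhi^2\neq 0$. Both hinge on $n\geqslant 2$ in the same essential way, and both reach $r=0$, $Ric=0$, $R=0$ immediately afterwards.
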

\begin{proof}
 We start from the well-known decomposition for the Riemann-Christoffel curvature
\[
\begin{array}{rcl}
R(X,Y,Z,W) &=& C(X,Y,Z,W) + \dfrac{1}{2n-1}( Ric(Y,Z)g(X,W)-Ric(Y,W)g(X,Z) \\ [+8pt]
           & & \qquad\quad + g(Y,Z)Ric(X,W) -g(Y,W)Ric(X,Z)) \\[+6pt]
           & & - \dfrac{r}{2n(2n-1)}(g(Y,Z)g(X,W)-g(Y,W)g(X,Z)),
\end{array}
\]
$C(X,Y,Z,W)$ denotes the Weyl curvature. Under assumption that $\mathcal{M}$ is locally conformally flat $C=0$.
For weakly para-cosymplectic manifold $R(X,Y,\xi,W)=0$ identically which yields that the vector field $\xi$ 
lies in the kernel of the Ricci tensor $Ric(\xi,\cdot)=0$. Thus
\[
0= R(X,Y,\xi,W) = \eta(Z)(Ric(X,W)-\frac{r}{2n}g(X,W))  
   -\eta(W)(Ric(X,Z)-\frac{r}{2n}g(X,Z)),
\]
setting $Z=\xi$ we obtain $Ric(X,W)=\frac{r}{2n×}(g(X,W)-\eta(X)\eta(W))$ that is $\mathcal{M}$ is $\eta$-Einstein.
As $\mathcal{M}$ is $\eta$-Einstein and conformally flat we find 
\[
 R(\varphi X,\varphi Y, Z, W) = \dfrac{r}{2n(2n-1)}(\varPhi(Y,Z)\varPhi(X,W)-\varPhi(Y,W)\varPhi(X,Z)),
\]
$\varPhi$ is the fundamental form, from the other hand by the Proposition \ref{phi-symm}, eq. (4)
\[
 R(X,Y,Z,W) = -\dfrac{r}{2n(2n-1)}(\varPhi(Y,Z)\varPhi(X,W)-\varPhi(Y,W)\varPhi(X,Z)),
\]
now applying the first Bianchi identity follows
\[
 r\,\varPhi\wedge\varPhi = 0,
\]
 hence $r=0$ and in consequence $R=0$.  
\end{proof}

\section{Connection and curvature of elliptic and hyperbolic five-manifolds}
\label{secCon}
In this section we provide the basic formulas for the connection and the curvature in both cases: elliptic 
and hyperbolic. The main tool to reach these goals is a set of a particular frames we are going to describe. 
\subsection{Adopted frames}
Before proceeding further we need  the following general 
\begin{proposition}
\label{locfrgen}
 Let $\mathcal{M}$ be an almost para-cosymplectic manifold, $dim(\mathcal{M})=2n+1 \geqslant 3$ . Near each point $p\in \mathcal{M}$ there is 
a local frame of vector fields $(E_0, E_1, \ldots, E_n, E_{n+1},\ldots, E_{2n})$ such that 
\[
\begin{array}{l}
 E_0 = \xi, \quad \varphi E_i = E_i, \quad \varphi E_{i+n} = -E_{i+n},  \quad i = 1,\ldots,n, \\[+4pt]
 g(E_0,E_i) = g(E_0,E_{i+n}) = 0, \quad g(E_i, E_{i+n}) = 1, \quad i = 1,\ldots,n,
\end{array}
\]
and the other coefficients of the metric tensor vanish.
\end{proposition}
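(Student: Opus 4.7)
The plan is to exploit the eigenspace decomposition $T\mathcal{M} = \langle\xi\rangle \oplus \mathcal{D}_{-1}\oplus\mathcal{D}_{+1}$ already recorded in the Preliminaries, and to combine the isotropy of $\mathcal{D}_{\pm 1}$ with the nondegeneracy of $g$ to produce a ``dual basis'' frame. First I would note that the projectors $\tfrac12(\varphi^2+\varphi)$ and $\tfrac12(\varphi^2-\varphi)$ are smooth $(1,1)$-tensor fields of constant rank $n$, so $\mathcal{D}_{+1}$ and $\mathcal{D}_{-1}$ are smooth rank-$n$ subbundles of $T\mathcal{M}$ that admit local frames on some neighbourhood $\mathcal{U}$ of $p$.

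The next step is to collect the orthogonality relations that are forced by the axioms. For any $X\in \mathcal{D}_{\pm 1}$ one has $\varphi^2 X = X$, and since $\varphi^2 = \mathrm{Id} - \eta\otimes\xi$ this gives $\eta(X) = g(\xi,X) = 0$, so $\xi$ is $g$-orthogonal to $\mathcal{D}_{+1}\oplus\mathcal{D}_{-1}$. The total isotropy of $\mathcal{D}_{\pm 1}$ is already stated in the Preliminaries. Finally, the pairing $g\colon \mathcal{D}_{+1}\times\mathcal{D}_{-1}\to \mathbb{R}$ must be nondegenerate: a vector $X\in \mathcal{D}_{+1}$ orthogonal to all of $\mathcal{D}_{-1}$ would also be orthogonal to $\mathcal{D}_{+1}$ (isotropy) and to $\xi$ (previous step), hence orthogonal to all of $T\mathcal{M}$, contradicting nondegeneracy of $g$. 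Thus $g$ induces a smooth bundle isomorphism $\mathcal{D}_{-1}\cong \mathcal{D}_{+1}^{*}$ over $\mathcal{U}$.

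With these ingredients the frame is built directly. Choose any smooth local frame $(E_1,\dots,E_n)$ of $\mathcal{D}_{+1}$ on $\mathcal{U}$, and let $(E_{1+n},\dots,E_{2n})$ be the unique smooth sections of $\mathcal{D}_{-1}$ dual to $(E_1,\dots,E_n)$ with respect to the above pairing, i.e.\ $g(E_i,E_{j+n})=\delta_{ij}$; their existence and smoothness are guaranteed by the isomorphism $\mathcal{D}_{-1}\cong \mathcal{D}_{+1}^{*}$. Setting $E_0=\xi$, the identities $\varphi E_i=E_i$, $\varphi E_{i+n}=-E_{i+n}$, $g(E_0,E_i)=g(E_0,E_{i+n})=0$, $g(E_i,E_j)=g(E_{i+n},E_{j+n})=0$ and $g(E_i,E_{j+n})=\delta_{ij}$ then follow immediately from the three facts collected above.

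There is no genuine analytic obstacle: the argument is linear algebra performed smoothly on each fibre. The only point requiring care is the nondegeneracy of $g|_{\mathcal{D}_{+1}\times\mathcal{D}_{-1}}$, which is exactly what allows the ``dual basis'' construction to go through and to depend smoothly on the point.
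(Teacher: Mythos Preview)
Your proof is correct. The approach differs from the paper's, though.

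The paper proceeds by an inductive Gram--Schmidt-type construction: starting from $E_0=\xi$, at each stage one passes to the orthogonal complement of the span already built (which is nondegenerate), picks a non-null vector $V$ there, and sets $E_{k+1}=\tfrac{1}{\varepsilon\sqrt{2}}(Id+\varphi)V$, $E_{k+1+n}=\tfrac{1}{\sqrt{2}}(Id-\varphi)V$. One then checks $g(E_{k+1},E_{k+1+n})=1$ and the required $\varphi$-eigenvector property.

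Your argument instead packages the linear algebra once and for all: you observe that $g$ identifies $\mathcal{D}_{-1}$ with the dual of $\mathcal{D}_{+1}$, then take any local frame of $\mathcal{D}_{+1}$ and its $g$-dual in $\mathcal{D}_{-1}$. This is cleaner and avoids the step-by-step induction; the paper's method, on the other hand, is more explicit about how each new pair is manufactured from a single vector and makes the existence of a non-null section at each stage the only analytic input. Either route proves the proposition without difficulty.
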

\begin{proof}
 Let assume that we already have constructed $(E_0, E_1, \ldots, E_k, E_{1+n}, \ldots, E_{k+n})$, $k \leqslant n$. Obviously we may assume $k<n$. Let denote by $\mathcal{D}_k$ the 
(2k+1)-dimensional local distribution spanned by $(E_0, E_1, \ldots, E_k, E_{1+n}, \ldots, E_{k+n})$. The distribution $\mathcal{D}_k$ is non-degenerate: the restriction $g|_{\mathcal{D}_k}$ of the metric tensor is non-degenerate. Thus there is properly defined orthogonal complement $\mathcal{D}_k^\perp$; we have  the splitting of the tangent bundle restricted to a small neighborhood $\mathcal{U}_p$ of a some point $p$
\[
 T\mathcal{U}_p = \mathcal{D}_k\oplus \mathcal{D}_k^\perp,
\]
the orthogonal complement $\mathcal{D}_k^\perp$ is also non-degenerate: so there is a non-null local section $V\in \Gamma(\mathcal{D}_k^\perp)$, we may assume  $g(V,V) = \varepsilon$, $\varepsilon = \pm 1$. Now let (note $\eta(\mathcal{D}_k^\perp)=0$)
\[
 E_{k+1} = \frac{1}{\varepsilon\sqrt{2}}(Id+\varphi)V, \quad E_{k+1+n} = \frac{1}{\sqrt{2}}(Id-\varphi)V, 
\]
hence 
$$(Id-\varphi)E_{k+1} = \frac{1}{\varepsilon\sqrt{2}×}(Id^2-\varphi^2)V=0, \quad (Id+\varphi)E_{k+1+n}=0,
$$
and
\[
g(E_{k+1}, E_{k+1+n}) = 
\dfrac{1}{2\varepsilon }(g(V,V) -g(V,\varphi V) +g(\varphi V, V) - g(\varphi V , \varphi V)) = 1.  
\]
The frame $(E_0, E_1, \ldots, E_k, E_{1+n}, \ldots, E_{k+n}) \cup (E_{k+1},E_{k+1+n})$ satisfies the requirements of the Proposition. The construction starts from $E_0=\xi$.
\end{proof}

\begin{theorem}
\label{Ashape}
 Let $\mathcal{M}$ be an elliptic or hyperbolic weakly para-cosymplectic manifold with para-K\"ahlerian 
leaves. Then near each point there are non-vanishing vector fields $V_1$, $V_2$ such that 
\[
\begin{array}{l}
AX = \varepsilon( g(X,V_1)V_1-g(X,V_2)V_2), \quad \textrm{- the hyperbolic case,}\\[+4pt]
AX = \varepsilon( g(X,V_1)V_1+g(X,V_2)V_2), \quad \textrm{- the elliptic case,}
 ×
\end{array}
\]
$\varphi V_1 = -V_1$, $\varphi V_2 = V_2$, the vector fields $V_i$, $i=1,2$ are determined uniquely up to the sign $V_i \mapsto -V_i$.  
\end{theorem}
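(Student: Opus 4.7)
The plan is to reduce the statement to a pointwise algebraic normal form (already available from the cited theorem of \cite{Dtsu}) and then promote the pointwise choice of $V_1$, $V_2$ to smooth local vector fields. Pointwise one has $V_1\in\mathcal{D}_{-1}$, $V_2\in\mathcal{D}_{+1}$ and signs $\varepsilon_1,\varepsilon_2\in\{\pm 1\}$ (with $\varepsilon_1=\varepsilon_2$ in the elliptic case and $\varepsilon_1=-\varepsilon_2$ in the hyperbolic case) such that $AX=\varepsilon_1 g(X,V_1)V_1+\varepsilon_2 g(X,V_2)V_2$. What needs to be shown is that the $V_i$ can be chosen smoothly in a neighborhood and that the choice is unique up to a sign change $V_i\mapsto -V_i$.

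The essential tool is to split $A$ along the $\pm 1$-eigenspaces of $\varphi$. I would introduce the smooth endomorphism fields
$$A_{-} := \tfrac{1}{2}(A - \varphi A), \qquad A_{+} := \tfrac{1}{2}(A + \varphi A).$$
Because $\varphi V_1=-V_1$ and $\varphi V_2=V_2$, a direct substitution shows that pointwise
$$A_{-}X = \varepsilon_1\, g(X,V_1)\,V_1, \qquad A_{+}X = \varepsilon_2\, g(X,V_2)\,V_2,$$
so $\im(A_{-})\subset\mathcal{D}_{-1}$, $\im(A_{+})\subset\mathcal{D}_{+1}$, and each of $A_{\pm}$ has rank exactly one on the elliptic/hyperbolic locus. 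Both tensors are manifestly smooth.

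To extract $V_1$ (and analogously $V_2$) I would pick a smooth vector field $W$ with $g(W,V_1(p))\neq 0$ at the base point $p$; such a $W$ is easy to produce from the adopted local frame of Proposition~\ref{locfrgen}. On the neighborhood where $g(A_{-}W,W)\neq 0$ the identity
$$g(A_{-}W,W) \;=\; \varepsilon_1\, g(W,V_1)^2$$
pins down $\varepsilon_1$ as the locally constant sign of $g(A_{-}W,W)$, and $V_1$ must be parallel to $A_{-}W$. Writing $V_1 = f\,A_{-}W$ and inserting back into $A_{-}X=\varepsilon_1 g(X,V_1)V_1$ forces $f^2 = 1/|g(A_{-}W,W)|$, so
$$V_1 \;=\; \pm\,\frac{A_{-}W}{\sqrt{|g(A_{-}W,W)|}}$$
is a smooth local section, determined up to an overall sign. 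The same construction applied to $A_{+}$ yields $V_2$. Uniqueness is then immediate: any alternative $\widetilde V_1$ must be a scalar multiple $\lambda V_1$ by the rank-one property, and substitution into $A_{-}X = \varepsilon_1 g(X,\widetilde V_1)\widetilde V_1$ forces $\lambda^2 = 1$.

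The main obstacle is the passage from the pointwise normal form to smooth vector fields on a neighborhood, which in general is obstructed by the $\mathbb{Z}/2$-ambiguity of a square root. The $\varphi$-eigenspace splitting resolves this cleanly by reducing the problem to extracting a rank-one symmetric bilinear form $\varepsilon_i\,\alpha_i\otimes\alpha_i$, which is always locally smooth up to sign. The remaining identities ($\varphi V_1=-V_1$, $\varphi V_2=V_2$, the reconstruction $A = A_{-}+A_{+}$, and the orthogonality $g(V_1,V_2)=0$ that follows from $A^2=0$) are then formal consequences.
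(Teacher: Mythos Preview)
Your proof is correct and follows essentially the same strategy as the paper: both split $A$ according to the $\pm 1$-eigenbundles of $\varphi$ into two rank-one self-adjoint pieces and then extract a smooth local ``square root'' of each, unique up to sign. The paper carries this out by computing the matrix of $g(A\cdot,\cdot)$ in an adapted frame $(E_0,E_i,E_{i+n})$ from Proposition~\ref{locfrgen} and factoring the resulting rank-one diagonal blocks as $c_{ij}=\varepsilon_1 d_id_j$, $c_{(i+n)(j+n)}=\varepsilon_2 h_ih_j$, whereas you perform the same splitting frame-free via $A_\pm=\tfrac12(A\pm\varphi A)$ and normalize $A_\pm W$; the content is identical.
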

\begin{proof}
 We adopt here the main idea of the proof of the Theorem {\bf 5} in \cite{Dtsu}. For a point 
$p\in\mathcal{M}$ let $(E_0, E_i,E_{i+n})$, $i=1,\ldots,n$ be a local frame constructed in the Proposition {\bf \ref{locfrgen}.} We set
\[
 c_{ij} = g(AE_i, E_j), \quad c_{(i+n)(j+n)} = g(AE_{i+n}, E_{j+n}), \quad i,j = 1,\ldots,n
\]
note ($\varphi A=-A\varphi$)
\[
 c_{i(j+n)} = g(AE_i, E_{j+n}) = - g(\varphi AE_i, \varphi E_{j+n}) = - c_{i(j+n)},
\]
hence $c_{i(j+n)} = 0$ identically  and 
$$
c_{0i} = c_{0(i+n)}= g(AE_0,E_i)=g(AE_0, E_{i+n}) = 0.
$$
 If  
$X = \sum\limits_{i= 0}^{2n}X^iE_i$, $Y= \sum\limits_{j=0}^{2n}Y^jE_j$ are local vector fields defined near the point $p$ then 
\[
 g(AX,Y) = \sum\limits_{i,j=1}^n c_{ij}X^iY^j+ \sum\limits_{i,j=1}^n c_{(i+n)(j+n)}X^{(i+n)}Y^{(j+n)},
\]
by the assumption $\textrm{rank}(g(A\cdot, \cdot))=2$ which implies (cf. \cite{Dtsu}) that the square, symmetric $n\times n$ matrices $[c_{ij}]_{i=1,j=1}^{n,n}$, $[c_{(i+n)(j+n)}]_{i=1,j=1}^{n,n}$ both have  rank $=1$.
Therefore there are smooth functions $d_1,\ldots,d_n$, $h_1,\ldots,h_n$ defined on a possibly smaller neighborhood of $p$, such that 
\[
 c_{ij} = \varepsilon_1 d_i d_j, \quad c_{(i+n)(j+n)} = \varepsilon_2 h_ih_j, \quad \varepsilon_1 = \varepsilon_2 = \pm 1,
\]
the functions $d_i$, $h_i$ are determined uniquely up to the change of the sign 
$(d_1,\ldots,d_n) \mapsto (-d_1,\ldots, -d_n)$,  $(h_1,\ldots,h_n) \mapsto (-h_1,\ldots,-h_n)$. Let define
\[
 \theta^1(X) =\sum\limits_{i=1}^n d_iX^i, \quad \theta^2(X) = \sum\limits_{i=1}^n h_i X^{i+n}, 
\quad V_1 = \sum\limits_{i=1}^n d_i E_{i+n}, \quad V_2 = \sum\limits_{i=1}^n h_i E_i,
\]
then  $\varphi V_1 = -V_1$, $\varphi V_2 = V_2$, $\theta^i(X) = g(X,V_i)$, $i=1,2$ and 
\[
 g(AX,Y) = \varepsilon_1\theta^1(X)\theta^1(Y)+\varepsilon_2\theta^2(X)\theta^2(Y),
\]
depending on the sign $\sigma=\varepsilon_1\varepsilon_2$ we have: the elliptic case $\sigma=1$, the 
hyperbolic case $\sigma = -1$
\[
 AX = \varepsilon_1 (\theta^1(X)V_1 + \sigma \theta^2(X)V_2).
\]
\end{proof}
\begin{remark}
\label{xorient}
 In the above formula $\varepsilon_1$ can be arbitrary $-1$ or $+1$. To simplify the notation we 
introduce a concept of $\xi$-orientability. Let consider a deformation of the structure $(\varphi, \xi, \eta)\mapsto (\varphi', \xi', \eta')$ defined by
\[
 \varphi' = \varphi, \quad \xi' = -\xi, \quad \eta' = -\eta.
\]
 The forms 
\[
  \eta'\wedge\varPhi'^n,\quad \eta\wedge\varPhi^n,          
\]
determine the opposite orientations on the manifold. The deformation follows $A \mapsto A' = -A$. Thus if necessary changing the direction of $\xi$ we can assure that $\varepsilon_1 = 1$. 
\end{remark}

Let $(\mathcal{M},\phi,\xi,\eta, g)$ be a 5-di\-men\-sional weakly para-cosymplectic manifold with para-K\"ahlerian leaves.  Assume that $\mathcal{M}$ is non-degenerate, we mean by that the tensor field $A$ is of maximall rank. Thus $\mathcal{M}$ is elliptic or hyperbolic: here we assume that at each point.  So near each point there are vector fields $V_1$, $V_2$ and
\begin{equation}
AX = g(X, V_1)V_1 +\sigma g(X, V_2)V_2, \quad \phi V_1 = -V_1, \quad
 \phi V_2 = V_2, \quad g(V_1,V_2) = 0,
\end{equation}
and $\sigma= \pm 1$.
Let extend $(\xi,V_1,V_2)$ to a local frame $(\xi, V_1, V_2, V_3, V_4)$
in the way that $\varphi V_3 = V_3$,  $\varphi V_4 =  -V_4$,  
 and the only non-zero coefficients of the metric  are
\begin{equation}  
g(\xi,\xi) = g(V_1, V_3) = g(V_2, V_4) = 1.
\end{equation}
For example we can take 
\[
 V_3 = \dfrac{U+\varphi U}{2}, \quad V_4 = \dfrac{W-\varphi W}{2},
\]
where $U$ and $W$ are local vector fields and such that
\[
 g(U,V_1) = 1, \quad g(W,V_2)= 1, \quad \eta(U)=\eta(W) = 0.
\]
If the coefficient $c=g(V_3,V_4) \neq 0$ we modify $V'_3 = V_3- cV_2$. Then  $(\xi, V_1,V_2,V'_3,V_4)$ is the frame with the required properties.  Such frames will be called adopted frames. An extension of the vector fields $(\xi, V_1,V_2)$ to an adopted frame
$(\xi, V_1,V_2,V_3,V_4)$ is in general non-unique. However if $(\eta, V_1, V_2, V'_3, V_4')$ is another adopted local frame then  there is a function $\alpha$ such that
\begin{equation}
V'_3 = V_3 + \alpha V_2, \quad V_4' = V_4 -\alpha V_1.
\end{equation}

The next Proposition describes locally the Levi-Civita connection. 

\begin{proposition}
\label{lc} 
Let $(\xi, V_1,V_2,V_3,V_4)$ be an adopted frame defined near a point $p$ on a neighborhood $\mathcal{U}_p$. There 
are local 1-forms $\tau_1$, $\tau_2$ and $\omega$ defined on $\mathcal{U}_p$ such that 
\begin{equation}
\label{lcf}	
\begin{array}{l}
\\[-1pt]
(a) \quad\begin{matrix}
\nabla_X\xi = -AX = -g(X, V_1) V_1 -\sigma g(X, V_2) V_2, \quad 
\end{matrix}
 \\[+8pt] 
(b)\quad
\begin{matrix}
\nabla_X V_1 = \tau_1(X)V_1,\quad  \tau_1=\alpha_1 g(X,V_1),  \\[+2pt]
\nabla_X V_2 = \tau_2(X) V_2, \quad \tau_2= \alpha_2 g(X,V_2), 
\end{matrix}
\\[+16pt]
(c)\quad
\begin{matrix}
\nabla_X V_3  = g(X, V_1) \xi + \omega(X) V_2 - \tau_1(X)V_3 , \\[+2pt]
\nabla_X V_4 = \sigma g(X, V_2) \xi - \omega(X) V_1 - \tau_2(X) V_4,
\end{matrix}
\\[+4pt]
\end{array}
\end{equation}
 $\alpha_1$,  $\alpha_2$ are functions on $\mathcal{M}$ and $\sigma=\pm 1$. 

\end{proposition}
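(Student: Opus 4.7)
Part~(a) follows directly from the definition $A=-\nabla\xi$ combined with the explicit form of $A$ supplied by Theorem~\ref{Ashape}.

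For (b) and (c) the plan is to expand each $\nabla_X V_j$ in the adopted frame with unknown one-form coefficients and then cut these down in three successive stages. First, since $\eta(V_j)=0$ for $j=1,\ldots,4$, the leaf identity (\ref{nabfi}) specialises to $(\nabla_X\varphi)V_j=g(A\varphi X,V_j)\,\xi$. Combining this with $\nabla_X(\varphi V_j)=(\nabla_X\varphi)V_j+\varphi\nabla_X V_j$ and the $\varphi$-eigenvalue decomposition of the frame forces $\nabla_X V_j$ to stay inside the $\varphi$-eigenspace of $V_j$ modulo a multiple of $\xi$. Using $g(\varphi X,Y)=-g(X,\varphi Y)$ together with $\varphi A=-A\varphi$, the $\xi$-coefficient evaluates to $0,0,g(X,V_1),\sigma g(X,V_2)$ for $j=1,2,3,4$ respectively, producing the ans\"atze
\begin{align*}
\nabla_X V_1 &= \tau_1(X)V_1 + e(X)V_4, \\
\nabla_X V_2 &= \tau_2(X)V_2 + f(X)V_3, \\
\nabla_X V_3 &= g(X,V_1)\,\xi + \omega(X)V_2 + \mu_3(X)V_3, \\
\nabla_X V_4 &= \sigma g(X,V_2)\,\xi + \omega'(X)V_1 + \mu_4(X)V_4,
\end{align*}
with undetermined one-forms $\tau_1,\tau_2,e,f,\omega,\omega',\mu_3,\mu_4$.

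Next, metric compatibility applied to the non-trivial pairings $g(V_1,V_3)=g(V_2,V_4)=1$ and to $g(V_1,V_2)=g(V_3,V_4)=0$ yields $\mu_3=-\tau_1$, $\mu_4=-\tau_2$, $\omega'=-\omega$ and $f\equiv 0$. This brings the formulas for $\nabla_X V_3$ and $\nabla_X V_4$ into exactly the form claimed in~(c) and reduces~(b) to showing $e\equiv 0$ together with the sharper assertion $\tau_i=\alpha_i g(\cdot,V_i)$. For this I invoke the Codazzi condition (\ref{naba}): using $AV_3=V_1$, $AV_4=\sigma V_2$ and $A\xi=AV_1=AV_2=0$, a direct calculation gives
\[
(\nabla_X A)V_3 = 2\tau_1(X)V_1 + e(X)V_4, \qquad (\nabla_X A)V_4 = 2\sigma\tau_2(X)V_2.
\]
Comparing these expressions with $(\nabla_{V_3}A)X$ and $(\nabla_{V_4}A)X$ as $X$ runs through the frame forces $\tau_1$ to annihilate $\xi,V_1,V_2,V_4$, forces $\tau_2$ to annihilate $\xi,V_1,V_2,V_3$, and forces $e$ to vanish on every frame vector. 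Since $g(\cdot,V_1)$ is the frame dual of $V_3$ and $g(\cdot,V_2)$ is the frame dual of $V_4$, this gives $\tau_1=\alpha_1 g(\cdot,V_1)$ and $\tau_2=\alpha_2 g(\cdot,V_2)$ for smooth functions $\alpha_i$.

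The main technical obstacle is arranging the Codazzi equations so that every coefficient surviving the metric-compatibility step is eliminated by exactly one of the scalar identities they produce, without any circular dependence on $\omega$ (which remains unconstrained, as required). Once the ans\"atze are in place the remaining steps are essentially linear algebra.
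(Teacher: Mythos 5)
Your overall route (leaf identity to pin the $\varphi$-eigenspace components, metric compatibility to relate the unknown one-forms, the Codazzi condition (\ref{naba}) to finish) parallels the paper's proof, and your stage-one ans\"atze together with the relations $\mu_3=-\tau_1$, $\mu_4=-\tau_2$, $\omega'=-\omega$ are correct. However, there is a genuine error where stage two feeds into stage three. Metric compatibility applied to $g(V_1,V_2)=0$ gives $g(\nabla_XV_1,V_2)+g(V_1,\nabla_XV_2)=e(X)+f(X)=0$, i.e. $f=-e$; no metric pairing yields $f\equiv 0$ by itself (the pairings $g(V_1,V_4)$, $g(V_2,V_3)$ and those involving the isotropic vectors or $\xi$ are vacuous). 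Consequently your displayed formula $(\nabla_XA)V_4=2\sigma\tau_2(X)V_2$ is missing the term $\sigma f(X)V_3=-\sigma e(X)V_3$. This matters: if one really sets $f=0$, the two Codazzi comparisons you invoke do \emph{not} force $e\equiv 0$. Writing $\theta^i=g(\cdot,V_i)$, they yield only $e=e(V_3)\,\theta^1$, $\tau_2=\alpha_2\theta^2$ and $\tau_1=\tau_1(V_3)\,\theta^1+\tfrac12 e(V_3)\,\theta^4$ (the constraint $2\tau_1(V_2)=e(V_3)$ comes from comparing $(\nabla_{V_2}A)V_3$ with $(\nabla_{V_3}A)V_2$, where $\theta^4(V_2)=1$), so the component $e(V_3)$ and a spurious $\theta^4$-part of $\tau_1$ survive, and conclusion (b) does not follow from the equations you actually wrote down.

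The repair is short. Keep $f=-e$, so that $(\nabla_XA)Y=2\tau_1(X)\theta^1(Y)V_1+2\sigma\tau_2(X)\theta^2(Y)V_2+e(X)\bigl(\theta^4(Y)V_1+\theta^1(Y)V_4\bigr)-\sigma e(X)\bigl(\theta^3(Y)V_2+\theta^2(Y)V_3\bigr)$. The $V_4$-component of $(\nabla_XA)V_3=(\nabla_{V_3}A)X$ gives $e=e(V_3)\theta^1$, while the $V_3$-component of $(\nabla_XA)V_4=(\nabla_{V_4}A)X$ gives $e=e(V_4)\theta^2$; since $\theta^1$ and $\theta^2$ are pointwise independent, $e=0$ (hence $f=0$), and then the $V_1$- and $V_2$-components give $\tau_i=\alpha_i\theta^i$ as claimed. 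Alternatively, the paper avoids this bookkeeping altogether: Lemma \ref{Anull}, i.e. $(\nabla_XA)AY=0$ (a consequence of (\ref{naba}), $A^2=0$ and the symmetry of $A$), applied with $Y=V_3$ and $Y=V_4$ gives $0=-A\nabla_XV_1=-\sigma e(X)V_2$ and $0=-\sigma A\nabla_XV_2=-\sigma f(X)V_1$, killing $e$ and $f$ at once before the Codazzi comparison is used; the paper also obtains the $\xi$-components in part (c) from metric compatibility with $\xi$ rather than from the leaf identity, but that difference is immaterial.
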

\begin{lemma} We have the identities
\label{Anull}
\begin{equation}
\label{nkera}
A(\nabla_XA)Y = (\nabla_XA)AY= (\nabla_{AY}A)X=0,
\end{equation}
for arbitrary vector fields $X$, $Y$.
\end{lemma}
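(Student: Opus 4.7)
The plan is to leverage the explicit form of $A$ from Theorem \ref{Ashape} together with the covariant derivative formulas from Proposition \ref{lc}, then combine these with the Codazzi property \eqref{naba} and the nilpotency $A^2=0$ to deduce all three identities cheaply.

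First I would unpack why $A^2=0$ concretely in the adopted frame: since $V_1,V_2$ span totally isotropic directions and $g(V_1,V_2)=0$, the formula $AX = g(X,V_1)V_1+\sigma g(X,V_2)V_2$ gives at once $AV_1=AV_2=0$, and $A\xi=0$ is immediate from $\eta(V_i)=0$. In particular $\operatorname{Im} A\subseteq \ker A$. Next I would compute $(\nabla_XA)V_i$ for $i=1,2$. Using part (b) of Proposition \ref{lc}, namely $\nabla_X V_i = \tau_i(X)V_i$, I get
\begin{equation*}
0 = \nabla_X(AV_i) = (\nabla_XA)V_i + A(\nabla_X V_i) = (\nabla_XA)V_i + \tau_i(X)\,AV_i = (\nabla_XA)V_i.
\end{equation*}
Since for any $Y$ the vector $AY$ is a linear combination of $V_1$ and $V_2$ with coefficients $g(Y,V_1)$ and $\sigma g(Y,V_2)$, linearity of $(\nabla_XA)$ in its argument yields $(\nabla_XA)(AY)=0$, which is the middle identity of the Lemma.

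The third identity $(\nabla_{AY}A)X=0$ then follows instantly from the Codazzi property \eqref{naba}: $(\nabla_{AY}A)X = (\nabla_XA)(AY) = 0$. For the first identity I would differentiate $A^2=0$ as a $(1,1)$-tensor:
\begin{equation*}
0 = \nabla_X(A^2)(Y) = (\nabla_XA)(AY) + A\bigl((\nabla_XA)Y\bigr),
\end{equation*}
and since the first term vanishes by the step above, $A(\nabla_XA)Y=0$ as required.

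No step looks genuinely hard here; the only thing that requires a little care is making sure the Codazzi substitution is applied in the right direction and that the derivation of $A^2=0$ as a tensor identity is written cleanly rather than appealing to $A^2=0$ pointwise. The essential content is really just the observation that $\operatorname{Im} A\subseteq \ker A$ together with the fact that both $V_1$ and $V_2$ are recurrent under $\nabla$ in the adopted frame.
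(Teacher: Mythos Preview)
Your argument is circular. You invoke the recurrence formulas $\nabla_X V_i = \tau_i(X)V_i$ from Proposition~\ref{lc}(b), but in the paper that very formula is established \emph{using} Lemma~\ref{Anull}: the proof of part~(b) first writes $\nabla_X V_1 = \tau_1(X)V_1 + \kappa_1(X)V_4$ (which follows only from the $\varphi$-eigenspace argument) and then appeals to $(\nabla_XA)AV_3=0$ from the Lemma to force $\kappa_1=0$. Without already knowing $\kappa_1=\kappa_2=0$, your computation of $(\nabla_XA)V_1$ becomes $(\nabla_XA)V_1 = -A\nabla_XV_1 = -\kappa_1(X)AV_4 = -\sigma\kappa_1(X)V_2$, which is not obviously zero, and the rest of your chain collapses.

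The paper's proof avoids this by working purely with the tensorial identities available \emph{before} any frame computation: $A$ is self-adjoint, $A^2=0$, and $A$ is Codazzi. It sets $l(X,Y,Z)=g((\nabla_XA)Y,AZ)$, observes from Codazzi that $l$ is symmetric in $(X,Y)$, and from self-adjointness together with $\nabla_X(A^2)=0$ that $l$ is antisymmetric in $(Y,Z)$; a tensor symmetric in one pair and antisymmetric in an overlapping pair vanishes identically. That gives $A(\nabla_XA)Y=0$ first, then $(\nabla_XA)AY=0$ from $\nabla_X(A^2)=0$, and finally $(\nabla_{AY}A)X=0$ from Codazzi. If you want to rescue a frame-based approach, you would need to carry the $\kappa_i$ terms through and eliminate them by some independent argument---but at that point you are essentially reproving the Lemma anyway.
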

\begin{proof}
Let define $l(X,Y,Z) =g((\nabla_XA)Y,AZ)$.
 By  $(\nabla_X A)Y = (\nabla_Y A)X$  
\begin{equation}
\label{lsym}
 l(X,Y,Z) = l(Y, X, Z).
\end{equation}
From  $A^2=0$
\begin{equation}
\label{diffa}
 A(\nabla_XA)Y + (\nabla_XA)AY =  (\nabla_X A^2)Y =0,
\end{equation}
also note that $g((\nabla_X A)Y,Z) = g(Y, (\nabla_XA)Z)$ as $A$ is self-adjoint, hence
\begin{equation}
\label{lanti}
 l(X,Y, Z)  =   -g(AY, (\nabla_XA)Z) =  -l(X,Z,Y), 
\end{equation}
therefore (\ref{lsym}) and (\ref{lanti}) together imply $l = 0$ identically. In a consequence
for arbitrary $X$, $Y$
\[
0 = A(\nabla_X A)Y= (\nabla_X A)AY =  (\nabla_{AY}A)X,
\]
where in the last equation we again used the fact that $A$ is a Codazzi tensor. 
\end{proof}
\begin{proof}(Of the Proposition)

 \noindent {\bf Part (a).} 
 The equation $\nabla\xi = -A$ comes from the definition of $A$; the remaining part is the contents of the Theorem {\bf \ref{Ashape}} for this particular case.

\noindent {\bf Part (b).} From
 \begin{equation}
\begin{array}{l}
(\nabla_X\varphi)V_1 = g(A\varphi X , V_1) \xi - \eta(V_1) A\varphi X = 0, \\
(\nabla_X\varphi)V_2 = g(A\varphi X, V_2) \xi - \eta(V_2)A\varphi X =0,
 ×
\end{array}
\end{equation}
we obtain ($\varphi V_1 = -V_1, \varphi V_2= V_2$)
$$\varphi\nabla_X V_1 = - \nabla_X V_1, \qquad \varphi \nabla_X V_2 = \nabla_XV_2, $$
thus $\nabla_XV_1$ is a local field of $-1$-eigenvectors, $\nabla_XV_2$ is a local field of $+1$-eigenvectors and hence $\nabla_XV_1$ must be a linear combination of $V_1$ and $V_4$, while $\nabla_XV_2$ a linear combination of $V_2$, $V_3$  
$$
\begin{array}{l}
\nabla_XV_1 = \tau_1(X) V_1 + \kappa_1(X)V_4,\\
\nabla_XV_2 = \tau_2(X) V_2 + \kappa_2(X)V_3
 ×
\end{array}
$$
By the Lemma {\bf \ref{Anull}} (note also $V_1 = AV_3$, $\sigma V_2=AV_4$, $A^2=0$) 
$$
\begin{array}{l}
0 = (\nabla_XA)AV_3 = -A\nabla_XV_1= - \kappa_1(X)AV_4 = -\sigma\kappa_1(X)V_2, \\
0 = (\nabla_XA)AV_4 = -\sigma A\nabla_XV_2 = -\sigma \kappa_2(X) AV_3 = -\sigma\kappa_2(X)V_1
 ×
\end{array}
$$ 
hence as $X$ is arbitrary $\kappa_1 = \kappa_2 = 0$ identically. Now computing $\nabla A$ we find
\begin{equation}
(\nabla_XA)Y = 2\tau_1(X)g(Y,V_1)V_1 +2\sigma\tau_2(X)g(Y,V_2)V_2,
\end{equation}
and by $(\nabla_XA)Y =(\nabla_YA)X$ we have
\begin{equation}
\begin{array}{l}
\tau_1(X)g(Y,V_1) =\tau_1(Y)g(X,V_1), \\
  \tau_2(X)g(Y,V_2) =\tau_2(Y)g(X, V_2) ,
 ×
\end{array}
\end{equation}
 in a consequence 
$$
\begin{array}{c}
\tau_1(X)=\alpha_1g(X, V_1), \\
\tau_2(X)= \alpha_2g(X, V_2),  
 ×
\end{array}
$$
for functions $\alpha_1$, $\alpha_2$. 

\noindent {\bf Part (c).} Let 
\begin{equation}
 \nabla_XV_3 = \alpha^0(X) \xi + \alpha^1(X)V_1 + \alpha^2(X) V_2 + \alpha^3(X)V_3 + \alpha^4(X)V_4,
\end{equation}
with the help of what we have just proved we find 
\begin{equation}
 \begin{array}{l}
  \alpha^0(X) = g(\nabla_XV_3, \xi) = g(V_3, AX) = g(AV_3, X) = g(V_1, X), \\[+2pt]
   \alpha^1(X) = g(\nabla_XV_3,V_3) = 0, \\[+2pt]
  \alpha^2(X) = g(\nabla_XV_3,V_4) = \omega(X), \\[+2pt]
  \alpha^3(X) = g(\nabla_XV_3, V_1)  = - g(\nabla_XV_1, V_3) = -\tau_1(X), \\ [+2 pt]
  \alpha^4(X)= g(\nabla_XV_3, V_2)  = - g(\nabla_XV_2, V_3) = -\tau_2(X)g(V_2, V_3) = 0,
 \end{array}
\end{equation}
and we have to provide similar computations for $\nabla_XV_4$.
\end{proof}

\begin{corollary}
\label{dt12=0}
 The forms $\theta^1(X) = g(X, V_1)$,  $\theta^2= g(X, V_2)$ are  closed forms, $d\theta^1=d\theta^2=0$. 
\end{corollary}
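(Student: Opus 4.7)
The plan is to read off the result directly from the recurrence formulas for $V_1$ and $V_2$ in Proposition \ref{lc}(b). Since the Levi-Civita connection is torsion-free, for any $1$-form $\theta$ one has
\[
d\theta(X,Y) = (\nabla_X\theta)(Y) - (\nabla_Y\theta)(X),
\]
so it suffices to show that the $(0,2)$-tensor $(\nabla_X\theta^i)(Y)$ is symmetric in $X, Y$ for $i=1,2$.

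First I would compute $\nabla\theta^1$. Using $\theta^1(Y) = g(Y, V_1)$ and the metric compatibility of $\nabla$,
\[
(\nabla_X\theta^1)(Y) = g(Y, \nabla_X V_1) = \tau_1(X)\, g(Y, V_1) = \alpha_1\, \theta^1(X)\theta^1(Y),
\]
where in the middle equality I invoke Proposition \ref{lc}(b), and in the last equality I use $\tau_1 = \alpha_1\theta^1$. The resulting expression $\alpha_1\theta^1(X)\theta^1(Y)$ is manifestly symmetric in $X$ and $Y$, so $d\theta^1 = 0$. The argument for $\theta^2$ is identical, using $\nabla_XV_2 = \tau_2(X)V_2$ with $\tau_2 = \alpha_2\theta^2$, giving $(\nabla_X\theta^2)(Y) = \alpha_2\theta^2(X)\theta^2(Y)$.

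There is essentially no obstacle here: the corollary is an immediate consequence of the fact that $V_1$ and $V_2$ are recurrent vector fields whose recurrence forms are themselves proportional to $\theta^1$ and $\theta^2$ respectively. The whole content lies in Proposition \ref{lc}(b), which already isolates the precise algebraic form of $\tau_1$ and $\tau_2$ forced by the Codazzi condition $(\nabla_X A)Y = (\nabla_Y A)X$.
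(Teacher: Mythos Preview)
Your proof is correct and follows essentially the same approach as the paper: both compute $(\nabla_X\theta^1)(Y) = g(Y,\nabla_X V_1) = \tau_1(X)\theta^1(Y) = \alpha_1\theta^1(X)\theta^1(Y)$ using Proposition~\ref{lc}(b), observe the symmetry in $X,Y$, and conclude $d\theta^1=0$ (and likewise for $\theta^2$). Your write-up is slightly more explicit in spelling out the torsion-free identity $d\theta(X,Y) = (\nabla_X\theta)(Y) - (\nabla_Y\theta)(X)$, but the argument is the same.
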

\begin{proof}
It is enough to proof that the derivatives $\nabla\theta^1$ and $\nabla\theta^2$ are symmetric. But 
(cf. \ref{lcf}(b)) 
\[
 (\nabla_X\theta^1)(Y) = g(Y,\nabla_X V_1) = \tau_1(X)g(Y,V_1) = \alpha_1 g(X,V_1)g(Y,V_1),
\]
and similar computations for $\nabla\theta^2$.
\end{proof}

\subsection{The gauge transforms.}
In this short part we discuss how the objects which appears in the Levi-Civita connection in the Proposition 
{\bf\ref{lc}} the  forms $\tau_1$, $\tau_2$ and the form $\omega$ are related to the particular choice 
of the adopted frame. In other words we are interested in how these objects change under a gauge transform
\[
 \rho=(\xi, V_1, V_2, V_3, V_4) \mapsto \rho'=(\xi, V_1',V_2',V_3',V_4'),
\]
that is passing from one adopted frame to another such one. Above we have noticed that the adopted frames are related to each other through a function
\begin{equation}
\label{gtran}
 \begin{array}{l}
V_1' = \pm V_1, \quad V_2' = \pm V_2, \\
V_3' = V_3 +\alpha V_2, \quad V_4' = V_4 -\alpha V_1,
 ×
\end{array} 
\end{equation}
$\alpha$ is a smooth function defined on a common neighborhood where both adopted frames $\rho$, $\rho'$ exist.

\begin{proposition}
Let  two adopted local frames be defined on $\mathcal{U}$ and $\mathcal{U'}$ resp. Let 
$\tau_1$, $\tau_2$, $\omega$ are corresponding forms on $\mathcal{U}$
\[
 \nabla V_1 = \tau_1\otimes V_1,\quad \nabla V_2 = \tau_2\otimes V_2, \quad \omega(X) =g (\nabla_XV_3,V_4),
\]
 and $\tau_1'$, $\tau_2'$, $\omega'$ their counterparts on $\mathcal{U'}$
\[
 \nabla V_1' = \tau_1'\otimes V_1', \quad \nabla V_2' = \tau_2'\otimes V_2', \quad \omega'(X) = g(\nabla_XV_3',V_4').
\]
If $\mathcal{U}\cap\mathcal{U'}\neq \emptyset$ then on $\mathcal{U}\cap\mathcal{U'}$ 
\begin{equation}
\label{gauge}
 \tau_1' = \tau_1, \quad \tau_2' = \tau_2, \quad \omega' = \omega+\alpha(\tau_1+\tau_2)+d\alpha,
\end{equation}
\end{proposition}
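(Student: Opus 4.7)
The plan is a direct computation using the connection formulas (\ref{lcf}) and the metric pairings of an adopted frame.

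First I would handle $\tau_1'$ and $\tau_2'$. Since $V_1' = \pm V_1$ is simply a constant multiple (the sign does not depend on the point), the Leibniz rule gives $\nabla_X V_1' = \pm \nabla_X V_1 = \pm \tau_1(X) V_1 = \tau_1(X) V_1'$, so by the defining relation $\nabla V_1' = \tau_1' \otimes V_1'$ we must have $\tau_1' = \tau_1$. The identical argument gives $\tau_2' = \tau_2$.

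Next I would compute $\omega'(X) = g(\nabla_X V_3', V_4')$. Expanding by the Leibniz rule and using $\nabla_X V_2 = \tau_2(X) V_2$ from (\ref{lcf})(b),
\[
\nabla_X V_3' = \nabla_X V_3 + d\alpha(X)\, V_2 + \alpha\,\tau_2(X)\, V_2.
\]
Pairing with $V_4' = V_4 - \alpha V_1$, the orthogonality relations of the adopted frame ($g(V_2,V_1)=0$, $g(V_2,V_4)=1$) make the $V_2$-terms collapse to $d\alpha(X) + \alpha\,\tau_2(X)$. The term $g(\nabla_X V_3, V_4)$ is $\omega(X)$ by definition, and the remaining term $-\alpha\, g(\nabla_X V_3, V_1)$ is evaluated from (\ref{lcf})(c): since $\nabla_X V_3 = g(X,V_1)\xi + \omega(X) V_2 - \tau_1(X) V_3$ and $g(V_3,V_1) = 1$, one gets $g(\nabla_X V_3, V_1) = -\tau_1(X)$, contributing $+\alpha\,\tau_1(X)$. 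Adding everything together yields
\[
\omega'(X) = \omega(X) + \alpha\,\tau_1(X) + \alpha\,\tau_2(X) + d\alpha(X),
\]
which is (\ref{gauge}).

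I do not anticipate any real obstacle: once the symmetry $g(V_3,V_1)=1$ is spotted and combined with the explicit form of $\nabla_X V_3$, the rest is linear bookkeeping. The only subtlety is remembering that $\tau_1,\tau_2$ are insensitive to the $\pm$ ambiguity in $V_1,V_2$ because they are scalars extracted from an eigenline decomposition, while $\omega$ genuinely depends on the choice of the isotropic complement and hence picks up the $d\alpha$ and the $\alpha(\tau_1+\tau_2)$ correction.
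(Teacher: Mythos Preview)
Your argument is correct and follows essentially the same route as the paper: dismiss $\tau_1'=\tau_1$, $\tau_2'=\tau_2$ as immediate from $V_i'=\pm V_i$, then expand $g(\nabla_X V_3',V_4')$ using $\nabla_X V_2=\tau_2(X)V_2$ and $g(\nabla_X V_3,V_1)=-\tau_1(X)$ from Proposition~\ref{lc}. The paper's computation is organized slightly differently (it keeps the term $\alpha\,g(\nabla_X V_2,V_4)$ unevaluated one step longer rather than substituting $\nabla_X V_2=\tau_2(X)V_2$ at the outset), but the content is identical.
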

\begin{proof}
 The first two equations are rather obvious, for the third
\[
\begin{array}{rcl}
 \omega'(X) &=& g(\nabla_XV_3',V_4') = g(\nabla_XV_3,V_4) +d\alpha(X)g(V_2,V_4)  
  + \alpha g(\nabla_XV_2, V_4)  \\ 
& & - \; \alpha g(\nabla_XV_3,V_1) \\
&=& \omega(X) +\alpha(\tau_1(X)+\tau_2(X)) +d\alpha(X),
\end{array}
\]
in the last step we applied the formulas for the connection from the Proposition {\bf \ref{lc}.}
\end{proof}
We have two important corollaries: the families of local forms $\tau_1$ and $\tau_2$ give rise to 
a properly globally defined forms - the local forms coincide on the intersections of their domains of the definition. The second corollary consider  properties of the family of local forms $\omega$.
\begin{corollary}
\label{pKDom}
 Let define 
\[
 D\omega = d\omega - \omega\wedge(\tau_1+\tau_2),
\]
then 
\[
 D\omega' = D\omega + \alpha (d\tau_1+d\tau_2),
\]
here $\alpha$ is a local function which defines the gauge transform.
\end{corollary}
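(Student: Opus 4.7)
The plan is to substitute the gauge transformation laws from the preceding Proposition directly into the definition of $D\omega'$ and simplify, using only the Leibniz rule, $d^2 = 0$, and elementary properties of the wedge product on 1-forms.

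First I would write out $d\omega'$ by differentiating the identity $\omega' = \omega + \alpha(\tau_1+\tau_2) + d\alpha$. Since $d(d\alpha) = 0$, this yields
\[
 d\omega' = d\omega + d\alpha\wedge(\tau_1+\tau_2) + \alpha\,(d\tau_1+d\tau_2).
\]
Next I would expand $\omega'\wedge(\tau_1'+\tau_2') = \omega'\wedge(\tau_1+\tau_2)$ using $\tau_i' = \tau_i$. The cross term $\alpha(\tau_1+\tau_2)\wedge(\tau_1+\tau_2)$ vanishes, because for any 1-forms the wedge square is zero and $\tau_1\wedge\tau_2+\tau_2\wedge\tau_1=0$. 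Hence
\[
 \omega'\wedge(\tau_1+\tau_2) = \omega\wedge(\tau_1+\tau_2) + d\alpha\wedge(\tau_1+\tau_2).
\]

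Finally I would subtract the two expressions; the terms $d\alpha\wedge(\tau_1+\tau_2)$ cancel, leaving
\[
 D\omega' = d\omega' - \omega'\wedge(\tau_1'+\tau_2') = d\omega - \omega\wedge(\tau_1+\tau_2) + \alpha\,(d\tau_1+d\tau_2) = D\omega + \alpha\,(d\tau_1+d\tau_2),
\]
which is the asserted transformation rule. There is no real obstacle here: the entire content is the cancellation of the $d\alpha\wedge(\tau_1+\tau_2)$ term, which is precisely what makes $D\omega$ a natural covariant-like object attached to the family of local 1-forms $\omega$ modulo gauge.
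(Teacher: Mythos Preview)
Your proof is correct and follows exactly the same route as the paper: compute $d\omega'$ from the gauge law, expand $\omega'\wedge(\tau_1'+\tau_2')$ using $\tau_i'=\tau_i$ and the vanishing of $(\tau_1+\tau_2)\wedge(\tau_1+\tau_2)$, then subtract so that the $d\alpha\wedge(\tau_1+\tau_2)$ terms cancel. The only difference is that you spell out the cancellation of the $\alpha(\tau_1+\tau_2)\wedge(\tau_1+\tau_2)$ term explicitly, which the paper leaves tacit.
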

\begin{proof}
 According to (\ref{gauge})
\[
 d\omega' = d\omega + d\alpha\wedge(\tau_1+\tau_2) + \alpha (d\tau_1+d\tau_2),
\]
and
\[
 \omega'\wedge(\tau_1'+\tau_2') = \omega\wedge(\tau_1+\tau_2) +d\alpha\wedge(\tau_1+\tau_2),
\]
hence
\[
 D\omega' = d\omega'-\omega'\wedge(\tau_1'+\tau_2') = D\omega + \alpha (d\tau_1+d\tau_2).
\]
\end{proof}

\subsection{The connection form and the curvature.}
Let $(V_0= \xi, V_1,\ldots,V_4)$ be a local adopted frame and $(\theta^0=\eta, \theta^1, \ldots, \theta^4)$ be a 
metric-dual coframe defined by 
\[
 \theta^i(X) = g(X,V_i), \quad i=0,\ldots,4,
\]
using Berger's musical notations we may write
\[
 \theta^i = V_i^\flat, \quad \textrm{or} \quad \theta^{i\sharp} = V_i,
\]
and this notation comes from classical idea of ``rising'' and ``lowering'' indices with the help of the metric tensor. A dual coframe $(\delta^0,\delta^1, \ldots, \delta^4)$ is defined as usually by the Kronecker's $\delta$
\[
 \delta^i(V_j) = \delta^i_j,
\]
the metric-dual and the dual coframes are related by 
$\theta^i(X) = \sum_j\delta^j(X)\theta^i(V_j)=\sum_j\delta^j(X)g_{ij}$, $g_{ij}=g(V_i,V_j)$, so
\[
 \theta^i = \sum\limits_j g_{ij}\delta^j, \quad \delta^j = \sum\limits_i g^{ji}\theta^i,
\]
$[g^{ij}]$ is the inverse matrix of $[g_{ij}]=[g(V_i,V_j)]$.
A local connection form is a $5\times 5$ matrix of local 1-forms $\Omega = (\omega_i^j)$, $i,j=0,\ldots,4$  defined by the conditions
\begin{equation}
\label{nabvi}
 \nabla V_i = \sum\limits_{k=0}^4 \omega_i^k\otimes V_k, \quad i=0,\ldots,4, 
\end{equation}
$g(\nabla_XV_i,V_j) = -g(V_i,\nabla_XV_j)$ implies
\[
 \omega_i^kg_{kj} = - \omega_j^kg_{ik}.
\]
 Directly from the Proposition {\bf \ref{lc}} we have 

\begin{equation}
\label{OmegaMat}
 \Omega = \begin{pmatrix}
 0 & 0 & 0 & \theta^1 & \sigma\theta^2 \\
-\theta^1 & \tau_1 &  0 & 0 & -\omega \\
-\sigma\theta^2 & 0 & \tau_2 & \omega & 0 \\
0 & 0 & 0 & -\tau_1 & 0 \\
0 & 0 & 0 & 0 & -\tau_2 
          \end{pmatrix}. 
\end{equation}
If $R(X,Y)Z$ denotes the Riemann curvature operator
\[
 R(X,Y)Z  = [\nabla_X,\nabla_Y]Z - \nabla_{[X,Y]}Z,
\]
we define local curvature forms $C_i^j$ by
\[
 R(X,Y)V_i = \sum\limits_{j=0}^4 C_i^j(X,Y)V_j,
\]
introducing a curvature matrix $C(X,Y) = [C_i^j(X,Y)]$ we have
\[
 C(X,Y) = 2d\Omega(X,Y) - [\Omega(X),\Omega(Y)],
\]
or using exterior product we can write $C = 2(d\Omega -\Omega\wedge\Omega)$. Note 
$\tau_i = \alpha_i\theta^i$ for functions $\alpha_i$, $i=1,2$ using this in the direct computations we obtain
\begin{equation}
\label{curvf}
 C = \begin{pmatrix}
  0 & 0 & 0 & 0 & 0 \\
  0 & 2d\tau_1 & 0 & 0 & -C'  \\
 0 & 0 & 2d\tau_2 & C' & 0  \\
 0 & 0 & 0 & -2d\tau_1 & 0 \\
 0 & 0 & 0 & 0 & -2d\tau_2 
 \end{pmatrix}, 
\end{equation}
where the form $C'$ is given by 
\[
C' = 2(D\omega+\sigma\theta^1\wedge\theta^2).
\]
We denote $\varTheta^{ij}= 2\theta^i\wedge\theta^j$.  

\begin{proposition}
For the Riemann-Christoffel curvature tensor $R(X,Y,Z,W)$ we have
\begin{equation}
\label{curvg}
\begin{array}{rcl}
 R(X,Y,Z,W) & = & g(AX,Z)\;g(AY,W) - g(AY,Z)\; g(AX, W) 
          - 2d\tau_1(X,Y)\; \varTheta^{13}(Z,W) \\ [+6pt]
         & - & 2d\tau_2(X,Y) \; \varTheta^{24}(Z,W)
	+ 2D\omega(X,Y) \; \varTheta^{12}(Z,W).
\end{array}
\end{equation}
 and the forms $\tau_1$, $\tau_2$ and $\omega$ are subject to the 
identities
\begin{equation}
\begin{array}{crclcrcl}
 \label{intcond1}
  &   d\tau_1\wedge \theta^1 &=& 0,  & &  \quad d\tau_2\wedge\theta^2 &=&  0, \\
(a) & \quad d\tau_1\wedge \theta^3 &=&  D\omega \wedge \theta^2,\quad & (b) & \quad d\tau_2 \wedge \theta^4 &=& - D\omega \wedge \theta^1.
\end{array}
 \end{equation}

\end{proposition}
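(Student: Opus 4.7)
The plan is to extract $R(X,Y,Z,W)$ directly from the curvature matrix (\ref{curvf}) and then to derive the integrability identities by feeding the resulting expression into the first Bianchi identity.

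First I turn the operator form $R(X,Y)V_i=\sum_k C_i^k(X,Y)\,V_k$ into the Riemann-Christoffel tensor via $R(X,Y,V_i,V_j)=\sum_k C_i^k(X,Y)\,g_{kj}$. Because the only non-zero metric coefficients in the adopted frame are $g(\xi,\xi)=g(V_1,V_3)=g(V_2,V_4)=1$, reading (\ref{curvf}) off with this rule produces (up to antisymmetry in the last two slots) only three non-trivial pairings,
\[
R(X,Y,V_1,V_3)=2d\tau_1,\qquad R(X,Y,V_2,V_4)=2d\tau_2,\qquad R(X,Y,V_3,V_4)=C',
\]
every other $R(X,Y,V_a,V_b)$ being zero. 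For general $Z,W$ I expand in the dual coframe $(\delta^i)$ and then pass to the metric-dual coframe $(\theta^i)$ via the identifications forced by the metric, $\delta^1=\theta^3$, $\delta^2=\theta^4$, $\delta^3=\theta^1$, $\delta^4=\theta^2$. A short sign-tracking yields
\[
R(X,Y,Z,W)=-2d\tau_1\,\varTheta^{13}(Z,W)-2d\tau_2\,\varTheta^{24}(Z,W)+C'(X,Y)\,\varTheta^{12}(Z,W),
\]
and splitting $C'=2D\omega+2\sigma\,\theta^1\wedge\theta^2$ together with the algebraic identity
\[
g(AX,Z)g(AY,W)-g(AY,Z)g(AX,W)=\sigma\,\varTheta^{12}(X,Y)\,\varTheta^{12}(Z,W),
\]
which follows immediately by inserting $AX=\theta^1(X)V_1+\sigma\theta^2(X)V_2$ and expanding, delivers (\ref{curvg}).

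For the identities (\ref{intcond1}) I plug (\ref{curvg}) into the first Bianchi identity $\sum_{\mathrm{cyc}\,X,Y,Z} R(X,Y,Z,W)=0$. The $AA$ contribution vanishes cyclically because the self-adjointness $g(AX,Y)=g(X,AY)$ makes the relevant terms cancel in pairs. What remains is a linear combination of $\varTheta^{13}$, $\varTheta^{24}$, $\varTheta^{12}$ contracted with $W$; specialising $W$ in turn to $V_1,V_2,V_3,V_4$, each $\varTheta^{ab}(Z,W)$ collapses to $\pm\theta^c(Z)$ and the cyclic sum $\sum_{\mathrm{cyc}}\alpha(X,Y)\beta(Z)$ becomes the three-form $(\alpha\wedge\beta)(X,Y,Z)$. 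The choices $W=V_1$ and $W=V_2$ produce the two top-line identities, while $W=V_3$ and $W=V_4$ produce $(a)$ and $(b)$ respectively; the choice $W=\xi$ is vacuous since every $\varTheta^{ab}$ annihilates $\xi$.

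The main obstacle is combinatorial rather than conceptual: one must juggle the dual coframe $(\delta^i)$ alongside the metric-dual coframe $(\theta^i)$ and keep track of the signs introduced by the antisymmetry of $\varTheta^{ij}$ and by cyclic permutation. Once those conventions are fixed, the entire proposition reduces to bookkeeping driven by (\ref{curvf}) and the closedness of $\theta^1,\theta^2$ provided by Corollary~\ref{dt12=0}.
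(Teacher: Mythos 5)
Your proposal is correct and follows essentially the same route as the paper: read the covariant curvature off the matrix (\ref{curvf}) in the adopted frame, expand in the metric-dual coframe, absorb the $\sigma\,\varTheta^{12}\cdot\varTheta^{12}$ term into the $g(A\cdot,\cdot)$ products, and then extract (\ref{intcond1}) from the first Bianchi identity. The only differences (expanding both $Z$ and $W$ rather than just $Z$, and invoking the identity (\ref{Om2}) directly instead of the auxiliary relations (\ref{auxOm2})) are cosmetic, and your sign/convention bookkeeping, including the identifications $\delta^1=\theta^3$, $\delta^2=\theta^4$, $\delta^3=\theta^1$, $\delta^4=\theta^2$, is consistent with the paper's.
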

\begin{proof}
On the base of (\ref{curvf}) we find $R(X,Y,V_i,W)$ for each $i=0,\ldots,4$, e.g.
\begin{equation}
\label{rksi}
  \begin{array}{lcl}
  R(X,Y,V_3,W) & = & \theta^1(X) g(AY,W) - \theta^1(Y)g( AX,W) - 2d\tau_1(X,Y)\theta^3(W) \\[+6pt]
             &  & + 2 D\omega(X, Y)\theta^2(W), \\[+6pt]
  R(X,Y,V_4,W) & = & \sigma \theta^2(X)g( AY,W) - \sigma\theta^2(Y) g(AX,W) - 2d\tau_2(X,Y)\theta^4(W)  \\[+6pt]
		& & - 2D\omega(X, Y)\theta^1(W),
 \end{array}
\end{equation}
note that 
\begin{equation}
\label{auxOm2}
 \begin{array}{l}
  \theta^1(X)AY - \theta^1(Y)AX = 2\sigma (\theta_1\wedge\theta_2)(X,Y)V_2, \\[+4pt]
\sigma\theta^2(X)AY - \sigma\theta^2(Y)AX = -2\sigma (\theta_1\wedge\theta_2)(X,Y)V_1,
 \end{array}
\end{equation}

For a vector field $Z = \sum_{k=0}^4 \delta^k(Z)V_k= \sum_{k,j=0}^4 g^{kj}\theta^j(Z) V_k$ hence
\begin{equation*}
 \begin{array}{rcl}
  R(X,Y,Z,W) &=&  \sum\limits_{k=0}^4 \delta^k(Z)R(X,Y,V_k,W) = 
\sum\limits_{k,j=0}^4 g^{kj}\theta^j(Z)R(X,Y,V_k,W)  \\[+12pt]
 &=& \theta^3(Z)R(X,Y,V_1,W) +  \theta^1(Z)R(X,Y,V_3,W) + \theta^4(Z)R(X,Y,V_2,W)  \\ [+6pt]
& & + \; \theta^2(Z)R(X,Y,V_4,W)   \\[+6pt]
&=& -2 d\tau_1(X,Y)\varTheta^{13}(Z,W) - 2d\tau_2(X,Y)\varTheta^{24}(Z,W) + 2D\omega(X,Y)\varTheta^{12}(Z,W) \\[+6pt]
& & +\; (\theta^1(X)\theta^1(Z)+\sigma\theta^2(X)\theta^2(Z))g(AY,W) \\ [+6pt]
& &  - \; (\theta^1(Y)\theta^1(Z) +\sigma\theta^2(Y)\theta^2(Z))g(AX,W), 
 \end{array}
\end{equation*}

and by the Theorem {\bf \ref{Ashape}} (cf. also Remark {\bf \ref{xorient}})
\[
\theta^1(X)\theta^1(Z)+\sigma\theta^2(X)\theta^2(Z) = g(AX,Z) ,
 \]
which being applied in the above formula finishes the proof of (\ref{curvg}).

Applying the first Bianchi identity to (\ref{curvg}) we obtain 
\begin{eqnarray*}
   0  & = & (d\tau_1\wedge \theta^1)\otimes \theta^3 + (d\tau_2\wedge \theta^2)\otimes \theta^4 -
 (d\tau_1\wedge \theta^3 - D\omega\wedge \theta^2)\otimes \theta^1 -
 \\
 & &  (d\tau_2\wedge \theta^4+D\omega\wedge \theta^1) \otimes \theta^2
\end{eqnarray*}
therefore all terms in the brackets must vanish.
\end{proof}

Let $\iota_\xi\kappa$ denotes the inner product of the vector field $\xi$ and an exterior form 
$\kappa$.  It is assumed that 
\begin{equation}
 (\iota_\xi\kappa)(X_1,X_2,\ldots,X_k) = (k+1)\kappa(\xi,X_1,\ldots,X_k),
\end{equation}
where $\kappa$ is a $(k+1)$-form. For an exterior form $\kappa$ we define 
\[
\kappa^0=\kappa - \iota_\xi(\eta\wedge\kappa) =  \eta\wedge \iota_\xi\kappa.
\]
The tensor field $\varphi$ acts on $\kappa$
 $$
:\kappa \mapsto \kappa' = \varphi\kappa: 
 (X_1,\ldots,X_k)\mapsto (\varphi \kappa)(X_1,\ldots,X_k) = \kappa(\varphi X_1,\ldots, \varphi X_k),
$$
note that $\varphi \kappa^0 = \varphi (\eta\wedge\iota_\xi\kappa)= 0$ and the last equation holds by
$\varphi\eta = 0$. A form $\kappa$ is said to be $\varphi$-invariant, resp. $\varphi$-anti-invariant if $\varphi\kappa = \kappa$, resp. $\varphi\kappa = -\kappa$ or $\varphi$-null if $\varphi\kappa = 0$. 
\begin{proposition}
 Each form $\kappa$ has a unique sum decomposition
\begin{equation}
 \kappa = \kappa^+ + \kappa^- + \kappa^0,
\end{equation}
into $\varphi$-invariant $\kappa^+$, $\varphi$-anti-invariant $\kappa^-$ and $\varphi$-null $\kappa^0$ forms 
\begin{equation}
 \varphi\kappa^+ = \kappa^+,\quad \varphi\kappa^- = -\kappa^-, \quad \varphi\kappa^0 = 0.
\end{equation}
\end{proposition}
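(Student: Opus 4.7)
The plan is to observe that the operator $\kappa\mapsto\varphi\kappa$ on exterior forms has minimal polynomial dividing $t^{3}-t=t(t-1)(t+1)$, and to read the decomposition off from the corresponding (simple) Lagrange projectors. On vector fields we have $\varphi^{2}X=X-\eta(X)\xi$ by the almost para-contact axiom, together with $\varphi\xi=0$: indeed $\varphi^{2}\xi=\xi-\eta(\xi)\xi=0$, and if $\varphi Y=0$ then $0=\varphi^{2}Y=Y-\eta(Y)\xi$, so $\ker\varphi=\{\xi\}$ and the only scalar multiple of $\xi$ that $\varphi$ can send to $0$ is $0$ itself. Consequently $\varphi^{3}X=\varphi X-\eta(X)\varphi\xi=\varphi X$, and the identity $\varphi^{3}=\varphi$ lifts at once to the induced action on $k$-forms by $(\varphi^{3}\kappa)(X_{1},\ldots,X_{k})=\kappa(\varphi^{3}X_{1},\ldots,\varphi^{3}X_{k})=(\varphi\kappa)(X_{1},\ldots,X_{k})$.

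Since $t^{3}-t$ has distinct simple roots, the operator is diagonalizable with spectrum in $\{-1,0,1\}$ and the spectral projectors are
\[
P^{+}=\tfrac{1}{2}(\varphi^{2}+\varphi),\qquad
P^{-}=\tfrac{1}{2}(\varphi^{2}-\varphi),\qquad
P^{0}=1-\varphi^{2}.
\]
Setting $\kappa^{\pm}=P^{\pm}\kappa$ and $\kappa^{0}=P^{0}\kappa$ one checks directly from $\varphi^{3}=\varphi$ that $\varphi\kappa^{+}=\kappa^{+}$, $\varphi\kappa^{-}=-\kappa^{-}$, $\varphi\kappa^{0}=0$, and $P^{+}+P^{-}+P^{0}=1$. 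Uniqueness is automatic: in any decomposition $\kappa=\mu^{+}+\mu^{-}+\mu^{0}$ with $\varphi\mu^{+}=\mu^{+}$, $\varphi\mu^{-}=-\mu^{-}$, $\varphi\mu^{0}=0$, applying $\varphi$ and $\varphi^{2}$ yields a linear system forcing $\mu^{\pm}=P^{\pm}\kappa$ and $\mu^{0}=P^{0}\kappa$.

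The only genuinely computational step is to reconcile $P^{0}\kappa=\kappa-\varphi^{2}\kappa$ with the formula $\kappa^{0}=\eta\wedge\iota_{\xi}\kappa$ given in the definition, i.e.\ to verify $\varphi^{2}\kappa=\kappa-\eta\wedge\iota_{\xi}\kappa$. I would do this by expanding $(\varphi^{2}\kappa)(X_{1},\ldots,X_{k})=\kappa(\varphi^{2}X_{1},\ldots,\varphi^{2}X_{k})$ via multilinearity using $\varphi^{2}X_{i}=X_{i}-\eta(X_{i})\xi$: of the $2^{k}$ resulting terms, every term in which $\xi$ is substituted into two or more slots vanishes by antisymmetry of $\kappa$, leaving only the original $\kappa(X_{1},\ldots,X_{k})$ together with the $k$ terms linear in $\eta$, which regroup into $(\eta\wedge\iota_{\xi}\kappa)(X_{1},\ldots,X_{k})$ under the interior-product convention of the paper. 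This combinatorial bookkeeping is the main (and in fact only) obstacle; once it is dispatched, the rest of the statement is elementary spectral theory.
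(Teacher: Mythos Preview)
Your argument is correct and, once the identity $\varphi^{2}\kappa=\kappa-\eta\wedge\iota_{\xi}\kappa$ is used, your projectors $P^{\pm}=\tfrac{1}{2}(\varphi^{2}\pm\varphi)$ and $P^{0}=1-\varphi^{2}$ coincide exactly with the paper's explicit formulas $\kappa^{\pm}=\tfrac{1}{2}(\kappa\pm\varphi\kappa-\eta\wedge\iota_{\xi}\kappa)$, $\kappa^{0}=\eta\wedge\iota_{\xi}\kappa$. The paper simply writes these expressions down and checks by hand that they have the right $\varphi$-eigenvalues, invoking the identity $\varphi^{2}\kappa=\kappa-\eta\wedge\iota_{\xi}\kappa$ without justification; your route explains \emph{why} those formulas work, by observing that $\varphi^{3}=\varphi$ forces the minimal polynomial to split with simple roots and then reading off the Lagrange interpolation projectors. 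Your approach also buys something the paper's proof actually omits: the statement claims uniqueness, but the paper only exhibits a decomposition, whereas your linear-system argument (apply $\varphi$ and $\varphi^{2}$ and solve) closes that gap. The combinatorial verification of $\varphi^{2}\kappa=\kappa-\eta\wedge\iota_{\xi}\kappa$ that you outline is straightforward under the paper's conventions (note the factor in $(\iota_{\xi}\kappa)(X_{1},\ldots,X_{k-1})=k\,\kappa(\xi,X_{1},\ldots,X_{k-1})$, which exactly cancels the $1/k$ coming from the antisymmetrization in $\eta\wedge(\cdot)$).
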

\begin{proof}
 We set 
\begin{equation}
 \kappa^+ =\dfrac{1}{2}(\kappa+\varphi\kappa - \eta\wedge\iota_\xi\kappa), \quad 
\kappa^- = \dfrac{1}{2}(\kappa - \varphi\kappa -\eta\wedge\iota_\xi\kappa), \quad
\kappa^0 = \eta\wedge\iota_\xi\kappa,
\end{equation}
It is obvious that
\begin{equation}
 \kappa^+ +\kappa^- +\kappa^0 = \kappa.
\end{equation}
Next we directly verify  $\varphi\kappa^+ = \kappa^+$, $\varphi\kappa^- = -\kappa^-$ and $\varphi\kappa^0 = 0$. At first we note that for an arbitrary form 
\[
 \varphi^2\kappa = \kappa-\eta\wedge\iota_\xi\kappa,
\]
then for example
\[
 \varphi\kappa^+ = \dfrac{1}{2}(\varphi\kappa +\varphi^2\kappa) = \kappa^+,
\]
and similarly for $\kappa^-$.
\end{proof}

For a pair of the 2-forms $B$, $D$ by $B\cdot D$ we denote a 4-covariant tensor field $U(X,Y,Z,W)$ defined by
\[
 : (X,Y,Z,W) \mapsto U(X,Y,Z,W) = \dfrac{1}{2} (B(X,Y)D(Z,W)+B(Z,W)D(X,Y)),
\]
in other words $B\cdot D$ is a symmetric product on a space of 2-forms viewed as a space of linear functions on bivectors.

Let $(V_0=\xi, V_1,V_2,V_3,V_4)$ be an arbitrary adopted local frame and $(\theta^0=\eta,\theta^1,\theta^2,\theta^3,\theta^4)$  metric-dual coframe. Let $\varTheta^{ij} = 2\theta^i\wedge\theta^j$.
\begin{proposition}
\label{dtau-Dom-R-Ric}
For the  forms $d\tau_1$, $d\tau_2$ and for the form  $D\omega$, the curvature $R$, the Ricci tensor $Ric$, the scalar curvature $r$ we have the following decompositions
 \begin{equation}
\label{fmdec}
\begin{array}{rcl}
 d\tau_1 & = & \alpha_1\theta^1\wedge\theta^2 + \alpha_2 \theta^1\wedge\theta^3, \\[+4pt]
 d\tau_2 & = &  \beta_1\theta^1\wedge\theta^2 + \beta_2 \theta^2\wedge\theta^4, \\[+4pt]
D\omega & = &  \gamma \theta^1\wedge\theta^2 - \alpha_1\theta^1\wedge\theta^3 - 
\beta_1\theta^2\wedge\theta^4,
\end{array}
\end{equation}
\begin{equation}
\label{curvf2}
\begin{array}{rcl}
  R & =  & (\sigma+2\gamma)\,\varTheta^{12}\cdot\varTheta^{12}- \alpha_2\,\varTheta^{13}\cdot\varTheta^{13} 
        -  \beta_2\, \varTheta^{24}\cdot\varTheta^{24} \\[+4pt]
       & - & 2(\alpha_1\,\varTheta^{12}\cdot\varTheta^{13} +\beta_1\,\varTheta^{12}\cdot\varTheta^{24}),
\end{array}
\end{equation}
\begin{equation}
 \label{ric}
 Ric =   Tr_g ( R) =  
  -2\alpha_2\theta^1\odot\theta^3   -2\beta_2\theta^2\odot\theta^4-2(\alpha_1-\beta_1)\theta^1\odot\theta^2,
\end{equation}
and
\begin{equation}
 \label{scal}
r = -2(\alpha_2+\beta_2),
\end{equation}
functions $\alpha_2$, $\beta_2$ are eigen-functions  each of multiplicity $2$ of the Ricci tensor, thus they are independent of a particular choice of an adopted frame.
\end{proposition}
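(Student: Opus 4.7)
\emph{Decomposing $d\tau_1, d\tau_2, D\omega$ (eq.~(\ref{fmdec})).} The plan is first to pin down these three $2$-forms using the identities already available. By Proposition~\ref{lc}(b) one has $\tau_i = \alpha_i\theta^i$, and Corollary~\ref{dt12=0} gives $d\theta^1=d\theta^2=0$, so $d\tau_i = d\alpha_i\wedge\theta^i$; in particular $d\tau_1$ is a combination of the four wedges $\theta^1\wedge\theta^k$ ($k\neq 1$), and likewise $d\tau_2$ involves only $\theta^2\wedge\theta^k$. I would then expand $d\alpha_1$ and $d\alpha_2$ in the coframe $(\eta,\theta^1,\theta^2,\theta^3,\theta^4)$ and $D\omega$ in the full $10$-dimensional basis of $2$-form monomials $\theta^I\wedge\theta^J$, substitute into the two non-trivial integrability identities of (\ref{intcond1}), and match coefficients of the basis $3$-form monomials $\theta^i\wedge\theta^j\wedge\theta^k$. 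The linear system this produces kills every monomial involving $\eta=\theta^0$, kills the $\theta^1\wedge\theta^4$ part of $d\tau_1$ and the $\theta^2\wedge\theta^3$ part of $d\tau_2$, and identifies the $\theta^1\wedge\theta^3$-coefficient of $D\omega$ with the negative of the corresponding coefficient of $d\tau_1$ (analogously for $\theta^2\wedge\theta^4$ and $d\tau_2$). Exactly the five scalars $\alpha_1,\alpha_2,\beta_1,\beta_2,\gamma$ survive, giving (\ref{fmdec}).

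\emph{Curvature $R$ (eq.~(\ref{curvf2})).} With (\ref{fmdec}) established, I would substitute into (\ref{curvg}). The $A$-part is handled using $g(AX,Z)=\theta^1(X)\theta^1(Z)+\sigma\theta^2(X)\theta^2(Z)$: antisymmetrising in $(X,Y)$ collapses the diagonal pieces and produces a multiple of $(\theta^1\wedge\theta^2)(X,Y)(\theta^1\wedge\theta^2)(Z,W)$, i.e.\ a $\sigma$-contribution to the $\varTheta^{12}\cdot\varTheta^{12}$ piece. The remaining three terms $-2d\tau_1\otimes\varTheta^{13}$, $-2d\tau_2\otimes\varTheta^{24}$, $2D\omega\otimes\varTheta^{12}$ expand into ordered products of basis $2$-forms after inserting (\ref{fmdec}); invoking the exchange symmetry $R(X,Y,Z,W)=R(Z,W,X,Y)$ one combines each ordered $\varTheta^{ij}\otimes\varTheta^{kl}$ with its transpose into a symmetric product $\varTheta^{ij}\cdot\varTheta^{kl}$. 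Collecting like terms yields (\ref{curvf2}).

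\emph{Ricci, scalar curvature, and eigen-functions (eqs.~(\ref{ric})--(\ref{scal})).} The Ricci tensor is the trace $Ric(X,Y)=g^{ab}R(V_a,X,Y,V_b)$. The adopted-frame inverse metric has only the entries $g^{00}=g^{13}=g^{31}=g^{24}=g^{42}=1$; Proposition~\ref{phi-symm} kills the $g^{00}$ contribution, so only the pairings $(a,b)\in\{(1,3),(3,1),(2,4),(4,2)\}$ survive. Evaluating each $\varTheta^{ij}\cdot\varTheta^{kl}$ on these pairings is a short calculation using $\theta^i(V_j)=g_{ij}$; the $\varTheta^{13}\cdot\varTheta^{13}$ term produces $-2\alpha_2\,\theta^1\odot\theta^3$, the $\varTheta^{24}\cdot\varTheta^{24}$ term produces $-2\beta_2\,\theta^2\odot\theta^4$, and the three $\varTheta^{12}$-involving terms combine to give $-2(\alpha_1-\beta_1)\,\theta^1\odot\theta^2$, which is exactly (\ref{ric}). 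Taking one more trace gives (\ref{scal}). Writing out the Ricci operator $Q$ from $g(QX,Y)=Ric(X,Y)$ in the adopted-frame basis yields a $5\times 5$ matrix whose characteristic polynomial is $-\lambda(\lambda+\alpha_2)^2(\lambda+\beta_2)^2$; hence $-\alpha_2$ and $-\beta_2$ are Ricci eigenvalues of multiplicity $2$ each, so $\alpha_2,\beta_2$ are intrinsic invariants.

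\emph{Main obstacle.} The only substantive work is the coefficient-matching in the first paragraph: although the system is linear and decouples into manageable pieces, there are many $3$-form monomials to enumerate and signs to keep straight. The curvature and Ricci computations are then routine, provided one keeps careful account of the normalisation $\varTheta^{ij}=2\theta^i\wedge\theta^j$ and of the symmetric-product convention $B\cdot D = \tfrac{1}{2}(B\otimes D + D\otimes B)$ used to pass from ordered to symmetric products.
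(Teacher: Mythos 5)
Your proposal is correct and reproduces the paper's overall architecture (pin down the three $2$-forms via the first-Bianchi integrability conditions (\ref{intcond1}), substitute into (\ref{curvg}) using $g(AX,Z)=\theta^1(X)\theta^1(Z)+\sigma\theta^2(X)\theta^2(Z)$, then trace twice and read off the Ricci eigenvalues), but your first step runs on a different mechanism than the paper's. The paper first proves that $d\tau_1,d\tau_2$ are $\varphi$-anti-invariant, by setting $Z=V_1,V_2$ in $R(\varphi X,\varphi Y)Z=-R(X,Y)Z$; this kills the $\eta$-components at once and restricts the candidates to the four anti-invariant monomials $\theta^1\wedge\theta^2,\theta^1\wedge\theta^3,\theta^2\wedge\theta^4,\theta^3\wedge\theta^4$, after which (\ref{intcond1}) finishes the job. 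You instead use only $d\tau_i=d\alpha_i\wedge\theta^i$ (from Proposition \ref{lc}(b) and Corollary \ref{dt12=0}) and brute-force coefficient matching in the two nontrivial identities of (\ref{intcond1}); I checked that this system does close: comparing $d\tau_1\wedge\theta^3=D\omega\wedge\theta^2$ and $d\tau_2\wedge\theta^4=-D\omega\wedge\theta^1$ monomial by monomial kills the $\eta\wedge\theta^1$ and $\theta^1\wedge\theta^4$ parts of $d\tau_1$, the $\eta\wedge\theta^2$ and $\theta^2\wedge\theta^3$ parts of $d\tau_2$, all components of $D\omega$ except those on $\theta^1\wedge\theta^2,\theta^1\wedge\theta^3,\theta^2\wedge\theta^4$, and ties the last two coefficients to $-\alpha_1,-\beta_1$. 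So your route is valid; it trades the structural anti-invariance argument (and hence any use of the weakly para-cosymplectic curvature identity at that stage) for a longer but elementary enumeration. Your trace computation agrees with the paper's, and your eigenvalue argument (the Ricci operator is triangular in the adopted frame with characteristic polynomial $-\lambda(\lambda+\alpha_2)^2(\lambda+\beta_2)^2$) is more explicit than the paper's observation that $Ric+\alpha_2 g$ and $Ric+\beta_2 g$ are degenerate, and it actually substantiates the multiplicity-two claim. One bookkeeping caution for the step you summarize as ``collecting like terms'': with the normalization fixed by (\ref{auxOm2}) and (\ref{Om2}) (i.e. $\varTheta^{12}(X,Y)=\theta^1(X)\theta^2(Y)-\theta^1(Y)\theta^2(X)$), substituting (\ref{fmdec}) and (\ref{Om2}) into (\ref{curvg}) gives $\sigma+\gamma$, not $\sigma+2\gamma$, as the coefficient of $\varTheta^{12}\cdot\varTheta^{12}$; this is a normalization issue in how $\gamma$ enters the stated (\ref{curvf2}) and is harmless for (\ref{ric}), (\ref{scal}) and the eigenvalue statement, since that term is trace-free ($g^{12}=0$), but you should flag the discrepancy rather than assert the printed coefficient drops out of the collection.
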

\begin{proof}
Setting $Z= V_1, V_2$ in the identity
\[
 R(\varphi X, \varphi Y)Z =  - R(X,Y)Z,
\]
we obtain $d\tau_i(\varphi X,\varphi Y) = -d\tau_i(X,Y)$, $i=1,2$. Among the forms $\theta^i\wedge\theta^j$, $i<j$, only 
$\theta^1\wedge\theta^2$, $\theta^1\wedge\theta^3$, $\theta^2\wedge\theta^4$ and $\theta^3\wedge\theta^4$ are $\varphi$-anti-invariant.  Therefore according to the integrability conditions (\ref{intcond1}) we must have
\[
\begin{array}{l}
 d\tau_1 = \alpha_1 \theta^1\wedge\theta^2 + \alpha_2 \theta^1\wedge\theta^3, \\
 d\tau_2 =  \beta_1 \theta^1\wedge\theta^2 + \beta_2 \theta^2\wedge\theta^4,
\end{array}
\]
and in a consequence again by (\ref{intcond1}a,b) 
\[
\begin{cases}
 D\omega & = \theta^2\wedge u -\alpha_1\theta^1\wedge\theta^3, \\
 D\omega & = \theta^1\wedge v - \beta_1\theta^2\wedge\theta^4,
\end{cases} 
\]
for 1-forms $u$, $v$, hence $D\omega =  \gamma \theta^1\wedge\theta^2 - \alpha_1\theta^1\wedge\theta^3-\beta_1\theta^2\wedge\theta^4$ for a function $\gamma$.

Projecting the both sides of the equations (\ref{auxOm2}) onto $W$, next multiplying the first obtained equation by $\theta^1(Z)$ and the second by $\theta^2(Z)$ we find 
\[
 \begin{array}{l}
\theta^1(X)\theta^1(Z)g(AY,W)-\theta^1(Y)\theta^1(Z)g(AX,W) = 2\sigma(\theta^1\wedge\theta^2)(X,Y)\theta^1(Z)\theta^2(W), \\[+4pt]
\sigma\theta^2(X)\theta^2(Z)g(AY,W)-\sigma\theta^(Y)\theta^2(Z)g(AX,W) = -2\sigma(\theta^1\wedge\theta^2)(X,Y)\theta^2(Z)\theta^1(W),
 ×
\end{array}
\]
and summing by sides gives the identity
\begin{equation}
\label{Om2}
 g(AX,Z)g(AY,W)-g(AX,W)g(AY,Z) = \sigma \varTheta^{12}(X,Y) \varTheta^{12}(Z,W).
\end{equation}
Therefore by (\ref{curvg}, \ref{fmdec}, \ref{Om2}) we have (\ref{curvf2}).
 To obtain the Ricci tensor we compute the trace of each component in (\ref{curvf2}) separately
\[
\begin{array}{rcl}
 Ric(Y,Z) &=& (Tr_g R)(Y,Z) = \sum\limits_{m,n=0}^4 g^{mn}R(V_m, Y,Z, V_n) = \\[+6pt]
  & & \sum\limits_{m,n=0}^4g^{mn}U_1(V_m,Y,Z,V_n)+ \ldots + \sum\limits_{m,n=0}^4 g^{mn}U_k(V_m, Y,Z,V_n),
\end{array}
\]
where each $U_s$ is of the form $a\, \varTheta^{ij}\cdot\varTheta^{kl}$ for a function $a$ and some $i,j,k,l$.
Let $U = 2\varTheta^{ij}\cdot\varTheta^{kl}$ and  $S$ be (a symmetric)  tensor field defined by
\[
 : (Y,Z) \mapsto S(Y,Z) = \sum_{m,n=0}^4 g^{mn}U(V_m,Y,Z, V_n).
\]
We directly verify that
\[
 S = 2g^{il}\theta^j\odot\theta^k +2 g^{jk}\theta^i\odot\theta^l 
  -2g^{jl}\theta^i\odot\theta^k-2g^{ik}\theta^j\odot\theta^l
\]
here $\theta^i\odot\theta^k$ is a symmetric product, $2(\theta^i\odot\theta^j)(Y,Z)= \theta^i(Y)\theta^j(Z)+\theta^i(Z)\theta^j(Y)$.  
In the case $i=k$, $j=l$ $ \in \lbrace{1,2,3,4}\rbrace$ the formula simplifies to
\[
 S = 4 g^{ij}\theta^i\odot\theta^j,
\]
as all diagonal coefficients $g^{mm}$ for $m=1,\ldots,4$ vanish $g^{mm}=0$. 

According to (\ref{ric})  symmetric forms 
\[
 Ric +\alpha_2 g, \quad Ric+\beta_2 g,
\]
are both degenerate (of non-maximal rank) hence functions $-\alpha_2$, $-\beta_2$ are eigen-value functions of the Ricci tensor (or operator) thus they are independent of the choice of a local frame.
\end{proof}

\begin{theorem}
\label{riccipot}
If $\mathcal{M}$ is an elliptic or hyperbolic weakly para-cosymplectic five-manifold then the Ricci form $\rho$ is exact and 
\begin{equation*}
 \rho = -2d\tau_1 +2d\tau_2 = 2d(-\tau_1 +\tau_2),
\end{equation*}
\end{theorem}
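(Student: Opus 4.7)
The plan is to compute the Ricci form $\rho(X,Y)=Ric(\varphi X,Y)$ directly in an adopted local frame, using the explicit Ricci-tensor formula from Proposition~\ref{dtau-Dom-R-Ric}, and then to match the result against $-2d\tau_1+2d\tau_2$ coefficient by coefficient. The global character of the identity follows from the fact, already established earlier, that $\tau_1$ and $\tau_2$ are intrinsically defined 1-forms on $\mathcal{M}$.

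The first step is to pin down the action of $\varphi$ on the metric-dual coframe. Since $\varphi$ is skew-adjoint with respect to $g$ (an immediate consequence of $g(\varphi X,\varphi Y)=-g(X,Y)+\eta(X)\eta(Y)$), one has $\theta^i(\varphi X)=-g(X,\varphi V_i)$, and combining this with $\varphi V_1=-V_1$, $\varphi V_2=V_2$, $\varphi V_3=V_3$, $\varphi V_4=-V_4$ shows that $\theta^1$ and $\theta^4$ are $\varphi$-invariant while $\theta^2$ and $\theta^3$ are $\varphi$-anti-invariant. The crucial observation is that each of the three symmetric products occurring in
\[
Ric=-2\alpha_2\,\theta^1\odot\theta^3-2\beta_2\,\theta^2\odot\theta^4-2(\alpha_1-\beta_1)\,\theta^1\odot\theta^2
\]
couples one $\varphi$-invariant factor with one $\varphi$-anti-invariant factor. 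Substituting $\varphi X$ into the first argument therefore turns $\theta^i\odot\theta^j$ into a multiple of the antisymmetrization $\theta^i\wedge\theta^j$, with sign dictated by the parities: $(\theta^1\odot\theta^3)(\varphi X,Y)=(\theta^1\wedge\theta^3)(X,Y)$, $(\theta^2\odot\theta^4)(\varphi X,Y)=-(\theta^2\wedge\theta^4)(X,Y)$, and $(\theta^1\odot\theta^2)(\varphi X,Y)=(\theta^1\wedge\theta^2)(X,Y)$. Plugging in the expressions $d\tau_1=\alpha_1\theta^1\wedge\theta^2+\alpha_2\theta^1\wedge\theta^3$ and $d\tau_2=\beta_1\theta^1\wedge\theta^2+\beta_2\theta^2\wedge\theta^4$ from Proposition~\ref{dtau-Dom-R-Ric} then yields $\rho=-2d\tau_1+2d\tau_2$ on every adopted-frame domain.

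Finally I would pass from local to global. The Corollary following the gauge-transform proposition asserts that the local 1-forms $\tau_1$ and $\tau_2$ coincide on overlaps of adopted-frame charts; they are also unchanged under the sign flips $V_i\mapsto -V_i$, since $\nabla(-V_i)=\tau_i\otimes(-V_i)$ forces $\tau_i'=\tau_i$. Hence $\tau_1$ and $\tau_2$ patch to globally defined 1-forms on $\mathcal{M}$, so $-2\tau_1+2\tau_2$ is a global primitive of $\rho$, which is in particular exact. The only real obstacle in carrying out the proof is the sign bookkeeping arising from the mixed $\varphi$-parities of the $\theta^i$ together with the normalization conventions the paper adopts for $\wedge$ and $\odot$; once those parities are fixed, the identity reduces to a term-by-term verification.
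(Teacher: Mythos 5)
Your proposal is correct and follows essentially the same route as the paper: evaluate $Ric(\varphi X,Y)$ in an adopted frame using the expression for $Ric$ from Proposition~\ref{dtau-Dom-R-Ric}, use the mixed $\varphi$-parities of the $\theta^i$ to convert the symmetric products into wedge products (your three signs agree with the paper's conventions), and compare with the formulas for $d\tau_1$, $d\tau_2$. Your closing remark on gauge-invariance of $\tau_1,\tau_2$ (including under the sign flips $V_i\mapsto -V_i$) just makes explicit the globality the paper already secured in the corollary on gauge transforms.
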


\begin{proof}
 Note $\theta^i(\varphi X) = g(V_i, \varphi X) =  -\varepsilon_i \theta^i(X)$, here 
$\varphi V_i = \varepsilon_i V_i$ , $\varepsilon_i = \pm 1$, thus 

\begin{equation}
 2(\theta^i\odot\theta^j)(\varphi X,Y) =  -\varepsilon_i \left(\theta^i(X)\theta^j(Y) + \varepsilon_i\varepsilon_j \theta^i(Y)\theta^j(X)\right), 
\end{equation}
now we use (\ref{ric}) $\rho(X,Y) = Ric(\varphi X,Y)$
\[
 \begin{array}{rcl}
\rho(X,Y) &=& -2\alpha_2(\theta^1\odot\theta^3)(\varphi X,Y) - 2\beta_2(\theta^2\odot\theta^4)(\varphi X,Y) 
 -\; 2(\alpha_1-\beta_1)(\theta^1\odot\theta^2)(\varphi X,Y) \\[+4pt]
& = &  -2\alpha_2(\theta^1\wedge\theta^3)(X,Y) + 2\beta_2 (\theta^2\wedge\theta^4)(X,Y) 
- 2(\alpha_1 -\beta_1)(\theta^1\wedge\theta^2)(X,Y)
 ×
\end{array}
\]
and according to (\ref{fmdec}) the right hand side is equal to $-2d\tau_1+2d\tau_2$.
\end{proof}

\subsection{Algebraic classification of the curvature}
The idea is to treat the curvature $R$ purely algebraically. Formally the curvature $R$ is a quadratic form in 
$(\Theta^{12},\Theta^{13},\Theta^{24})$ and fixing this order we associate to the curvature $R$ a matrix $\bar R$ of this quadratic 
form. From (\ref{curvf2}) we find
\begin{equation}
\label{matR}
 \bar R = -\begin{pmatrix}
  -(\sigma+2\gamma)& \alpha_1 & \beta_1 \\
  \alpha_1& \alpha_2  &  0 \\
  \beta_1 & 0 & \beta_2.
 \end{pmatrix}
\end{equation}
 The correspondence $R \leftrightarrow \bar R$ is not 1-1. The local gauge transformation 
\[
(\theta^0,\ldots, \theta^4) \xrightarrow{\alpha} (\theta^{'0},\ldots, \theta^{'4})
\] 
defined by the function $\alpha$ (\ref{gtran}) determines a linear transformation
$P(\alpha): (\Theta^{12}, \Theta^{13}, \Theta^{24})\rightarrow (\Theta^{'12}, \Theta^{'13}, \Theta^{'24})$
\[
 \Theta^{'12} = \Theta^{12}, \quad \Theta^{'13}= \Theta^{13}+\alpha\Theta^{12}, \quad \Theta^{'24}=\Theta^{24}+\alpha\Theta^{12},
\]
and if $\bar R'$ is a matrix of the curvature with respect to $(\Theta^{'12},\Theta^{'13},\Theta^{'24})$ then $\bar R$ and $\bar R'$
 are related by 
\begin{equation}
\label{actionR}
\bar R' = (\alpha) \bar R (\alpha)^T,
\end{equation}
where $(\alpha)$ is a matrix
\[
 (\alpha) =
\begin{pmatrix}
 1 & \alpha & \alpha \\
 0 & 1 & 0 \\
 0 & 0 & 1
\end{pmatrix}.
\]
The formula (\ref{actionR} ) defines a group  $\mathbb{R}$-action on the set of symmetric matrices of the form (\ref{matR}).  If we identify 
these symmetric matrices with points in $\mathbb{R}^5$, then respective action is regular on the (open) set of points which represents matrices with 
non-zero determinant. Orbits are 1-dimensional and corresponding orbifold is a four dimensional manifold.  

A function $I$ (real-or complex-valued) of the  entries of the curvature matrix (\ref{matR})  we call a generalized curvature  invariant 
if this function takes the same values for $\bar R$ and $\bar R'$
\[
 I(\bar R) = I(\bar R'),
\]
 for arbitrary $\alpha$.  Simple examples of invariants are functions of the form $F(\alpha_2,\beta_2)$, where $F$ is a function of two real variables. 
Another obvious invariant is the determinant, $det(\bar R)$.  Here are more interesting examples  
\begin{equation}
\label{i-invariants}
 \begin{array}{rcl}
s(\alpha_2+\beta_2)+(\alpha_1+\beta_1)^2 &=& I_1, \\
\alpha_1\beta_2-\alpha_2\beta_1 &=& I_2.\\
 ×
\end{array} 
\end{equation}

\subsection{The Weyl curvature}
 This part is devoted to obtain an expression for the Weyl curvature tensor similar to (\ref{curvf2}). We begin from simple remarks 
concerning of the Kulkarni-Nomizu product of the two symmetric tensor fields. For arbitrary symmetric tensor field $h(X,Y)$  
\[
 R(X,Y,Z,W)= h(Y,Z)h(X,W)-h(Y,W)h(X,Z),
\]
is algebraic curvature tensor, i.e. $R$ has the the same symmetries as the curvature of Riemannian or pseudo-Riemannian manifold. It can be verified directly or in fact we note that $R$ is exactly of the form of the curvature of a hypersurface in (pseudo)-Euclidean space with the 
second fundamental form $h(X,Y)$. We may treat this formula as a quadratic form in $h$, $R = q(h)$ in consequence we may associate to $q(\cdot)$ its polar form $P(u,v)$ which is a symmetric, bilinear form on symmetric 2-tensors 
$
 P(u,v) = \frac{1}{4×}(q(u+v)-q(u-v)),
$
 and 
\[
 P_{(u,v)}(X,Y,Z,W) = \dfrac{1}{2}(u(Y,Z)v(X,W)-u(Y,W)v(X,Z) + v(Y,Z)u(X,W) - v(Y,W)u(X,Z)),
\]
now the tensor $2P_{(u,v)}$ is called the Kulkarni-Nomizu product of the two symmetric forms $u$, $v$ and usually denoted by $u\wedge v$ which is 
a bit misleading because the product is symmetric in $(u,v)$, $u\wedge v = v\wedge u$. From this construction it is clear that $u\wedge v$ 
is algebraic curvature tensor for arbitrary $(u,v)$. Using Kulkarni-Nomizu product the decomposition of the curvature can be shortly written as
\[
 R = C +\dfrac{1}{n-2}(Ric-\dfrac{r}{2(n-1)}g)\wedge g,
\]
 $g$ denotes a metric, and $Ric$ the Ricci tensor, $r$ a scalar curvature and $n$ is a dimension.  Going back to our considerations 
in the view of the our assumptions and obtained results we have $C = R-\frac{1}{3×}(Ric -\frac{r}{8×}g)\wedge g$, now from (\ref{ric}, \ref{scal}) and $g=\theta^0\odot\theta^0+2\theta^1\odot\theta^3+2\theta^2\odot\theta^4$, 
\[
 Ric -\dfrac{r}{8}g = \frac{(\alpha_2+\beta_2)}{4×}\theta^0\odot\theta^0 +\frac{(\beta_2-3\alpha_2)}{2×}\theta^1\odot\theta^3+\frac{(\alpha_2-3\beta_2)}{2×}\theta^2\odot\theta^4 -2(\alpha_1-\beta_1)\theta^1\odot\theta^2.
\]
Directly from the definition of the Kulkarni-Nomizu product we verify that 
\[
 (2\theta^i\odot\theta^j)\wedge(2\theta^k\odot\theta^l) = -2\Theta^{ik}\cdot\Theta^{jl}-2\Theta^{il}\cdot\Theta^{jk},
\]
using bilinearity of the Kulkarni-Nomizu product we obtain $(Ric-\frac{r}{8×}g)\wedge g$ in the form 
\begin{equation}
\label{expand}
\begin{array}{rcl}
(Ric-\frac{r}{8×}g)\wedge g &=& (\sum\limits_{i\leqslant j} 2s_{ij}\theta^i\odot\theta^j)\wedge(\sum\limits_{k\leqslant l} 2c_{kl}\theta^k\odot\theta^l) = \sum\limits_{i\leqslant j,k\leqslant l} s_{ij}c_{kl} (2\theta^i\odot\theta^j)\wedge(2\theta^k\odot\theta^l) = \\[+12pt]
&=& \sum\limits_{i\leqslant j,k\leqslant l} -2s_{ij}c_{kl}(\Theta^{ik}\cdot\Theta^{jl}+\Theta^{il}\cdot\Theta^{jk}).
 ×
\end{array} 
\end{equation}
We represent a quadratic form $q=\sum_{i}q_{i}v_i^2 + \sum_{i<j}2q_{ij}v_iv_j$ as non-oriented graph with vertices as variables and the two vertices 
$v_i$, $v_j$ are joined if term $2q_{ij}v_iv_j$ appears in the formula for $q$ ($q_{ij}\neq 0$) and a loop 
matches a quadratic term $q_{i}v_i^2$. Edges of the graph are labelled by the coefficients $q_{ij}$. For example $2v_1^2- 3v_1v_2+v_2v_3-4v_3^2$ is represented by       
$$
\xymatrix{
 v_1 \ar@(ul,dl)@{-}[]^{2} \ar@{-}[rr]^{-\frac{3}{2×}}
 && v_2 \ar@{-}[rr]^{\frac{1}{2×}}
 && v_3 \ar@(ur,dr)@{-}[]^{-4} }
$$
The Weyl curvature $C$ in general does not share the $\varphi$-invariance properties of the curvature.
\begin{proposition} 
\label{WCdec}
The  quadratic form $q(C) = C_1+C_2+C_3$ in $(\Theta^{ij})$, $i<j$, $i=0,\ldots,4$, $j=1,\ldots 4$ by (\ref{curvf2}, \ref{expand}) which represents the Weyl curvature has the following graph representation
$$
C_1: \xymatrix{
 \Theta^{01} \ar@{-}[rr]^{\frac{\beta_1-\alpha_1}{3}} && \Theta^{02} \\
  \Theta^{03} \ar@{-}[u]^{\frac{\beta_2-\alpha_2}{6}}  && \Theta^{04} \ar@{-}[u]^{\frac{\alpha_2-\beta_2}{6×}} } 
$$
\vspace{.5cm}
$$
C_2: \xymatrix{
\Theta^{24} \ar@(ul,dl)@{-}_{-\frac{\alpha_2+3\beta_2}{6}} \ar@{-}[rr]^{-\frac{\alpha_1+2\beta_1}{3}} 
 && \Theta^{12} \ar@(ur,ul)@{-}_{\sigma+2\gamma} \ar@{-}[rr]^{-\frac{2\alpha_1+\beta_1}{3}} 
 && \Theta^{13} \ar@(ur,dr)@{-}^<{-\frac{3\alpha_2+\beta_2}{6}}   \\
&& \Theta^{34} \ar@{-}[u]^{r/12} }
$$
\vspace{.5cm}
$$
C_3: \xymatrix{ 
 \Theta^{14} \ar@{-}[rr]^{-r/12} && \Theta^{23} }
$$
\end{proposition}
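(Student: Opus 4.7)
The plan is to apply, in the dimension $n=5$, the decomposition formula recalled immediately above the statement:
\[
C = R - \dfrac{1}{3}\bigl(Ric - \dfrac{r}{8}g\bigr)\wedge g,
\]
and then read off the coefficient of every basis monomial $\Theta^{ij}\cdot\Theta^{kl}$ in the resulting quadratic form. Both ingredients are already available in a completely explicit form: the curvature $R$ is the sum of five elementary pieces given by (\ref{curvf2}); the ``trace-free'' Ricci part $Ric-\frac{r}{8}g$ was written out just before (\ref{expand}) as a linear combination of the four symmetric products $\theta^0\odot\theta^0$, $\theta^1\odot\theta^3$, $\theta^2\odot\theta^4$, $\theta^1\odot\theta^2$; and the metric itself admits the expansion $g=\theta^0\odot\theta^0+2\theta^1\odot\theta^3+2\theta^2\odot\theta^4$.

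First I would expand $\bigl(Ric-\frac{r}{8}g\bigr)\wedge g$ by bilinearity as in (\ref{expand}), using the identity $(2\theta^i\odot\theta^j)\wedge(2\theta^k\odot\theta^l)=-2\Theta^{ik}\cdot\Theta^{jl}-2\Theta^{il}\cdot\Theta^{jk}$ on every pairing of the four summands of $Ric-\frac{r}{8}g$ with the three summands of $g$. This produces a sum of $\Theta^{ab}\cdot\Theta^{cd}$ monomials whose indices are dictated purely by combinatorics of the index pairs $\{0,0\}$, $\{1,3\}$, $\{2,4\}$, $\{1,2\}$. I would then subtract, with the factor $\frac{1}{3}$, the result from the right-hand side of (\ref{curvf2}), and collect coefficients of each distinct $\Theta^{ab}\cdot\Theta^{cd}$.

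Once the expansion is in hand, I would sort the surviving monomials into three disjoint groups according to the supports of the two bivectors involved. The monomials one of whose bivectors is of the form $\Theta^{0a}$ never interact with the purely horizontal ones coming from (\ref{curvf2}) (which contains no $\Theta^{0a}$ factor), so they isolate the component $C_1$ on the four vertices $\Theta^{01},\Theta^{02},\Theta^{03},\Theta^{04}$; here only the terms with $i$ or $j$ equal to $0$ in the Kulkarni–Nomizu expansion contribute, and a direct reading yields the four labelled edges displayed in $C_1$. The remaining monomials split further according to whether both bivectors lie in $\{\Theta^{12},\Theta^{13},\Theta^{24},\Theta^{34}\}$ (giving the connected component $C_2$, where the loops at $\Theta^{12}$, $\Theta^{13}$, $\Theta^{24}$ inherit respectively $\sigma+2\gamma$, $-\alpha_2$, $-\beta_2$ from $R$, corrected by the Kulkarni–Nomizu contributions) or in $\{\Theta^{14},\Theta^{23}\}$ (giving the single edge $C_3$, whose label is a pure Kulkarni–Nomizu contribution with no counterpart in $R$).

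The main obstacle is purely clerical: one must carry through the expansion of four different $u\wedge g$ products, each breaking into six ``Kulkarni–Nomizu dyads'', and verify that after the division by $3$ and subtraction from (\ref{curvf2}) the algebraic cancellations produce exactly the coefficients listed on every edge and loop of $C_1$, $C_2$, $C_3$ (in particular that the edges between the three components vanish, so that the three graphs are really disconnected). I would organise the computation as a small table indexed by ordered index-pairs so that sign and symmetrisation errors in the Kulkarni–Nomizu step are easier to detect; the verification of the loop coefficients $-\frac{\alpha_2+3\beta_2}{6}$, $\sigma+2\gamma$, $-\frac{3\alpha_2+\beta_2}{6}$ and of the edge $r/12$ is the most delicate checkpoint, since these are the places where the $\frac{r}{8}g\wedge g$ contribution must combine with both the $Ric\wedge g$ and the $R$ terms.
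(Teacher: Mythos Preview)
Your proposal is correct and follows exactly the approach the paper itself indicates: the proposition is stated without a separate proof, the label ``by (\ref{curvf2}, \ref{expand})'' making clear that one is to expand $(Ric-\tfrac{r}{8}g)\wedge g$ via the Kulkarni--Nomizu identity $(2\theta^i\odot\theta^j)\wedge(2\theta^k\odot\theta^l)=-2\Theta^{ik}\cdot\Theta^{jl}-2\Theta^{il}\cdot\Theta^{jk}$, subtract one third of the result from the curvature expression (\ref{curvf2}), and collect coefficients. Your organisation into the three components $C_1$, $C_2$, $C_3$ according to the index supports is precisely the bookkeeping the paper's graph presentation encodes, so there is nothing to add.
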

As a simple application of the above formula we have the following result
\begin{proposition}
 If Weyl curvature operator $C$ commutes with the structure affinor $\varphi$ 
\[
 C(X,Y)\varphi Z = \varphi C(X,Y)Z,
\]
then manifold is Ricci-flat and in consequence  $R = C$.
\end{proposition}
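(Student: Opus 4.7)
The approach is to show that $C$ inherits the algebraic symmetries of a weakly para-cosymplectic curvature operator, and then to match the resulting constraints against the decomposition $C=C_1+C_2+C_3$ of Proposition \ref{WCdec}. Imitating the proof of Proposition \ref{phi-symm}, from $C(X,Y)\varphi Z=\varphi C(X,Y)Z$ I set $Z=\xi$; since $\varphi\xi=0$ one gets $\varphi C(X,Y)\xi=0$, hence $C(X,Y)\xi\in\ker\varphi=\span\{\xi\}$. But $\eta(C(X,Y)\xi)=C(X,Y,\xi,\xi)=0$ by the antisymmetry of $C$ in the last pair, so $C(X,Y)\xi=0$, and by pair-exchange symmetry of an algebraic curvature tensor, $C$ vanishes whenever any of its four arguments is $\xi$. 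Using $\varphi^2 Z=Z-\eta(Z)\xi$ together with this, $\varphi C(X,Y)\varphi Z=C(X,Y)Z$, and pairing with $W$ via the skew-adjointness $g(\varphi\cdot,\cdot)=-g(\cdot,\varphi\cdot)$ (a restatement of the 2-form property of $\varPhi$) yields the key identity
\[
C(X,Y,\varphi Z,\varphi W)=-C(X,Y,Z,W).
\]

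Next I test each piece of the decomposition against this identity. Since $\theta^0=\eta$ vanishes on $\varphi(\cdot)$, every 2-form $\Theta^{0i}$ becomes null under $(Z,W)\mapsto(\varphi Z,\varphi W)$, so $C_1(X,Y,\varphi Z,\varphi W)=0$ automatically, and the identity above forces $C_1=0$. At the level of the dual coframe one checks that $\theta^1,\theta^4$ are $\varphi$-invariant while $\theta^2,\theta^3$ are $\varphi$-anti-invariant, so $\Theta^{14}$ and $\Theta^{23}$ are $\varphi$-invariant, whereas $\Theta^{12},\Theta^{13},\Theta^{24},\Theta^{34}$ are $\varphi$-anti-invariant. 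Consequently $C_3(X,Y,\varphi Z,\varphi W)=+C_3(X,Y,Z,W)$, which together with the identity gives $C_3=0$, while $C_2$ satisfies the identity automatically and is unconstrained.

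Reading off the graph coefficients in Proposition \ref{WCdec}, the vanishing $C_3=0$ is equivalent to $r=0$, and $C_1=0$ is equivalent to $\alpha_1=\beta_1$ together with $\alpha_2=\beta_2$. Substituting $r=0$ into the scalar curvature formula (\ref{scal}) yields $0=-2(\alpha_2+\beta_2)=-4\alpha_2$, so $\alpha_2=\beta_2=0$. Feeding $\alpha_2=\beta_2=0$ and $\alpha_1-\beta_1=0$ into the Ricci formula (\ref{ric}) produces $Ric=0$, and with both $Ric$ and $r$ vanishing the Riemann--Weyl decomposition reduces to $R=C$. The main obstacle is not algebraic difficulty but the combinatorial bookkeeping of the $\varphi$-types of the nine 2-forms $\Theta^{ij}$ entering the decomposition; once that catalogue is in hand, the conclusion is forced by Proposition \ref{WCdec}.
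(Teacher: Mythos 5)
Your proof is correct and follows essentially the same route as the paper: derive for $C$ the identities of Proposition \ref{phi-symm} (vanishing on $\xi$ and $C(X,Y,\varphi Z,\varphi W)=-C(X,Y,Z,W)$), then read off from the decomposition of Proposition \ref{WCdec} which summands are forced to vanish, obtaining the $\eta$-Einstein condition together with $r=0$, hence $Ric=0$ and $C=R$. Your identification of the dying pieces as $C_1$ and $C_3$ is in fact the correct reading: the paper's text asserts that ``$C_1$, $C_2$'' vanish, which must be a slip, since $C_2$ is built entirely from the $\varphi$-anti-invariant forms $\Theta^{12},\Theta^{13},\Theta^{24},\Theta^{34}$ and is compatible with the commutation assumption (the term $(\sigma+2\gamma)\,\Theta^{12}\cdot\Theta^{12}$ even survives in $R$ itself), so only $C_1$ and $C_3$ are forced to zero --- exactly as your bookkeeping shows.
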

\begin{proof}
 The summands $C_1$, $C_2$ in the Proposition \ref{WCdec} must vanish identically which yields   
that the Ricci tensor  is $\eta$-Einstein and the scalar curvature also vanishes $r=0$.  Thus $Ric=0$, the 
manifold is Ricci flat and $C=R$.  
\end{proof}

\section{$\eta$-Einstein manifolds}
\label{secEta}
This section is devoted to  the study of a particular class of manifolds for which the Ricci tensor satisfies
\begin{equation}
Ric(X,Y) = \dfrac{r_0}{4} (g(X,Y) - \eta(X)\eta(Y)), 
\end{equation}
we denote the scalar curvature by $r_0$ to emphasize that it must be constant according to the Theorem \ref{scEtaEin}. Let $(\mathcal{M},\varphi,\xi,\eta,g)$ be a elliptic or hyperbolic weakly para-cosymplectic manifold with 
para-K\"ahler leaves. Let fix a local adopted frame $(V_0=\xi, V_1,\ldots, V_4)$. Assuming $Ric$ is $\eta$-Einstein
by (\ref{ric})
\begin{eqnarray*}
Ric &=& \dfrac{r_0}{4}(g-\eta\odot\eta) = \dfrac{r_0}{2}(\theta^1\odot\theta^3+\theta^2\odot\theta^4) \\[+4pt]
   & = &  -2\alpha_2\theta^1\odot\theta^3-2\beta_2\theta^2\odot\theta^4-2(\alpha_1-\beta_1)\theta^1\odot\theta^2, 
\end{eqnarray*}
hence $\alpha_2=\beta_2 = -r_0/4$ and $\alpha_1 = \beta_1= a$ and by
(\ref{fmdec}) the differentials $d\tau_1$, $d\tau_2$ of the  forms satisfy
\begin{equation*}
 \begin{array}[]{rcl}
d\tau_1 &=& a\theta^1\wedge\theta^2 -\dfrac{r_0}{4}\theta^1\wedge\theta^3, \\[+8pt]
d\tau_2 &=& a\theta^1\wedge\theta^2 -\dfrac{r_0}{4}\theta^2\wedge\theta^4,
 ×
\end{array}
\end{equation*}
now the idea is to note that the coefficients $\alpha_1$, $\beta_1$ are dependent on the choice of an adopted
frame and that for suitable chosen such adopted frame we can assure $a=0$ if $r_0\neq 0$. Indeed if 
$(V_0'=\xi,V_1',\ldots,V'_4)$ is another adopted frame (\ref{gtran}), then as the  forms $\tau_i$ are gauge invariant we can write
\begin{equation*}
 \begin{array}[]{rcl}
d\tau_1 &=& a'\theta^{1'}\wedge\theta^{2'} -\dfrac{r_0}{4}\theta^{1'}\wedge\theta^{3'}, \\[+8pt]
d\tau_2 &=& a'\theta^{1'}\wedge\theta^{2'} -\dfrac{r_0}{4}\theta^{2'}\wedge\theta^{4'},
 ×
\end{array}
\end{equation*}     
here the relations between the forms $\theta^i$ and $\theta^{i'}$ are 
\[
 \theta^{3'} = \theta^3+\alpha\theta^2,\quad \theta^{4'}= \theta^4-\alpha\theta^1,
\]
 for the gauge function $\alpha$. Thus simple comparison follows
\[
 a = a' - \dfrac{r_0}{4}\alpha,
\]
 so if scalar curvature is non-zero setting $\alpha = - 4a/r_0$ enforce $a'$ to vanish.  
\begin{proposition}
 For $\eta$-Einstein manifold with non-zero (constant) scalar curvature near each point there is a unique up
to change of signs adopted frame such that
\begin{equation}
\label{streq1}
 \begin{array}{rcl}
d\tau_1 &=& -\dfrac{r_0}{4}\theta^1\wedge\theta^3, \\[+8pt]
d\tau_2 &=& -\dfrac{r_0}{4}\theta^2\wedge\theta^4.
 ×
\end{array}
\end{equation}
In the case of the Ricci-flat manifold when the scalar curvature $= 0$ we have
\begin{equation}
 \label{streq2}
d\tau_1 = d\tau_2 = a\theta^1\wedge\theta^2,
\end{equation}
in an arbitrary adopted frame.
\end{proposition}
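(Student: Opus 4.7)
The plan is to extract all the needed information from the already-established Proposition \ref{dtau-Dom-R-Ric} and the gauge transformation Corollary \ref{pKDom}, then determine the residual gauge freedom. First, I would substitute the $\eta$-Einstein hypothesis into the formula (\ref{ric}) for the Ricci tensor. Comparing
\[
  -2\alpha_2\,\theta^1\odot\theta^3 - 2\beta_2\,\theta^2\odot\theta^4 - 2(\alpha_1-\beta_1)\theta^1\odot\theta^2
  \;=\; \dfrac{r_0}{2}(\theta^1\odot\theta^3+\theta^2\odot\theta^4)
\]
term-by-term (the mixed 2-tensors $\theta^i\odot\theta^j$ being linearly independent in an adopted coframe) forces $\alpha_2=\beta_2=-r_0/4$ and $\alpha_1=\beta_1=:a$. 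Substituting back into (\ref{fmdec}) yields the preliminary form
\[
 d\tau_1 = a\,\theta^1\wedge\theta^2 - \tfrac{r_0}{4}\theta^1\wedge\theta^3,\qquad
 d\tau_2 = a\,\theta^1\wedge\theta^2 - \tfrac{r_0}{4}\theta^2\wedge\theta^4.
\]

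Next I would analyze how the function $a$ transforms under a gauge transformation (\ref{gtran}). Since $\tau_1,\tau_2$ are gauge invariant (by the first two equations of (\ref{gauge})), the forms $d\tau_1$ and $d\tau_2$ are the same differential 2-forms, but their decomposition coefficients change because the coframe vectors $\theta^3,\theta^4$ change: under the gauge $V_3'=V_3+\alpha V_2,\ V_4'=V_4-\alpha V_1$ one has
\[
 \theta^{3'} = \theta^3 + \alpha\,\theta^2, \qquad \theta^{4'} = \theta^4 - \alpha\,\theta^1.
\]
Substituting these into $d\tau_1 = a'\theta^1\wedge\theta^{3'} \cdot (\text{etc.})$ and matching coefficients with the previous expression gives the transformation rule $a' = a + \tfrac{r_0}{4}\alpha$ (I would verify the sign by direct substitution; the sign convention in the problem statement will come out of this short computation).

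With the transformation law in hand, the two cases are immediate. If $r_0\neq 0$, setting $\alpha = -4a/r_0$ produces an adopted frame with $a'=0$, giving (\ref{streq1}); conversely if two adopted frames both satisfy $a=a'=0$ then $\tfrac{r_0}{4}\alpha=0$ so $\alpha=0$, leaving only the sign ambiguity $V_i\mapsto \pm V_i$, which is the asserted uniqueness. If $r_0=0$ then the transformation rule reduces to $a'=a$, so $a$ is a gauge invariant and the formula (\ref{streq2}) holds in every adopted frame. There is no real obstacle; the only point to be careful about is the sign of the coefficient in the gauge rule for $a$, which is a direct but slightly tedious bookkeeping with $\theta^{3'},\theta^{4'}$ — I would do that computation explicitly to make sure the claim ``$\alpha=-4a/r_0$'' produces the desired cancellation.
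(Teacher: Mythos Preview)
Your proposal is correct and follows essentially the same argument as the paper: compare the $\eta$-Einstein condition with (\ref{ric}) to force $\alpha_2=\beta_2=-r_0/4$, $\alpha_1=\beta_1=a$, then use gauge invariance of $\tau_i$ together with $\theta^{3'}=\theta^3+\alpha\theta^2$, $\theta^{4'}=\theta^4-\alpha\theta^1$ to obtain the transformation law $a=a'-\tfrac{r_0}{4}\alpha$ and choose $\alpha=-4a/r_0$. The uniqueness and Ricci-flat cases are handled exactly as you describe.
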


\subsection{The local classification of scalar non-flat $\eta$-Einstein manifolds}
From (\ref{nabvi},\ref{OmegaMat}) we obtain the first set of Cartan's structure equations for 
an adopted metric-dual coframe 
\begin{equation}
\label{cartans}
\begin{array}{rcl}
d\theta^0 &=& d\eta = -\theta^1\wedge\theta^1-\sigma\theta^2\wedge\theta^2 = 0, \\[+4pt]
d\theta^1 &=& \tau_1\wedge\theta^1 = 0, \\[+4pt]
d\theta^2 &=& \tau_2\wedge\theta^2 = 0, \\[+4pt]
d\theta^3 &=& \theta^1\wedge\theta^0+\omega\wedge\theta^2 -\tau_1\wedge\theta^3, \\[+4pt]
d\theta^4 &=& \sigma\theta^2\wedge\theta^0 -\omega\wedge\theta^1 -\tau_2\wedge\theta^4.
 ×
\end{array}
\end{equation}
To this set of equations we have to add (\ref{streq1}) and their consequences: taking the exterior 
derivative of the both hands having in mind $d^2=0$ and $d\theta^0=d\theta^1=d\theta^2=0$ we obtain
\begin{equation}
 \label{cstreq1}
\theta^1\wedge d\theta^3 = 0,\quad \theta^2\wedge d\theta^4 = 0.
\end{equation}

\begin{theorem}
\label{th-eta-einst}
 Let $\mathcal{U}\subset \mathbb{R}^5 = \mathbb{R}\times\mathbb{R}^4$ be a domain and $(t,x^1,x^2,y^1,y^2)$ be
global Cartesian coordinates on $\mathbb{R}^5$. We define a coframe of local 1-forms on $\mathcal{U}$ 
\begin{equation}
\label{locdes}
 \begin{array}{l}
\theta^0 = dt, \\[+4pt]
\theta^1 = dx^1, \\[+4pt]
\theta^2 = dx^2, \\[+4pt]
\theta^3 = df+udx^1, \quad f = f^0(x^1)-y^1,\quad u = u^0(x^1,x^2)-t+\dfrac{r_0}{8}(y^1)^2, \quad r_0 \neq 0 \\[+8pt]
\theta^4 = dh +vdx^2, \quad h = h^0(x^2)+y^2, \quad v = v^0(x^1,x^2) -\sigma t+\dfrac{r_0}{8}(y^2)^2,
 ×
\end{array}
\end{equation}
for arbitrary smooth functions $f^0$, $u^0$, $h^0$, $v^0$ on $\mathcal{U}$ and a non-zero constant $r_0$, 
$\sigma= \pm 1$, and we set an almost para-contact metric structure $(\varphi,\xi, \eta, g)$ on $\mathcal{U}$ 
\begin{equation}
\label{locstruct}
 \begin{array}{l}
\xi = \dfrac{\partial}{\partial t}, \quad \eta = \theta^0 = dt, \\[+6pt]
\varphi \theta^0 = 0,\quad \varphi \theta^1 = \theta^1,\quad \varphi \theta^2 = -\theta^2, 
\quad \varphi\theta^3 = \theta^3,\quad \varphi\theta^4 = -\theta^4, \\[+4pt]
g =  dt\odot dt + 2 \theta^1\odot\theta^3+2 \theta^2\odot\theta^4,
 ×
\end{array}
\end{equation}
then $\mathcal{U}$ equipped with this structure is an almost para-cosymplectic manifold, weakly para-cosym\-plectic,
with para-K\"ahler leaves and $\eta$-Einstein with constant scalar curvature $r_0$. For $\sigma=1$ we have elliptic manifold, for $\sigma=-1$ hyperbolic. Conversely, if 
$\mathcal{M}$ is weakly para-cosymplectic manifold with para-K\"ahler leaves with $\eta$-Einstein 
Ricci tensor then near a point $p\in \mathcal{M}$ there is an adopted metric-dual coframe $(\theta^0=\eta, \theta^1,\ldots,\theta^4)$ and local coordinates $(t,x^1,x^2,y^1,y^2)$ near $p$ such that (\ref{locdes}) are
fulfilled. 
\end{theorem}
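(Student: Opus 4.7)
The plan is to prove the two directions separately, with the forward (existence) direction being a direct verification and the converse (classification) direction relying on successive integration of the Cartan structure equations together with equations (\ref{streq1})--(\ref{cstreq1}).

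For the forward direction, I would first check that $(\varphi,\xi,\eta,g)$ defined by (\ref{locstruct}) is a legitimate almost para-contact metric structure by reading off the metric in the basis $(\partial_t,\partial_{x^1},\partial_{y^1},\partial_{x^2},\partial_{y^2})$ and observing that $\eta=dt$, $\xi=\partial_t$, the eigenspaces of $\varphi$ are spanned respectively by $\theta^{1\sharp},\theta^{2\sharp}$ and $\theta^{3\sharp},\theta^{4\sharp}$, and $g=\eta\otimes\eta+2\theta^1\odot\theta^3+2\theta^2\odot\theta^4$ has the required totally isotropic splitting. Closedness of $\eta$ and of $\varPhi=2\theta^1\wedge\theta^3+2\theta^2\wedge\theta^4$ follows directly from the coordinate formulas (the differentials $d\theta^3$ and $d\theta^4$ are proportional to $dx^1$ and $dx^2$ respectively, which kills the unwanted terms in $d\varPhi$). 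Then I would compute $d\theta^3$ and $d\theta^4$ directly from (\ref{locdes}), read off the connection forms $\tau_1,\tau_2,\omega$ by matching against (\ref{cartans}), verify that $\tau_i = \alpha_i \theta^i$ holds, and finally compute $d\tau_1,d\tau_2$ to check (\ref{streq1}) with $\alpha_2=\beta_2=-r_0/4$ and $\alpha_1=\beta_1=0$, so that by Proposition \ref{dtau-Dom-R-Ric} the Ricci tensor is $\eta$-Einstein with scalar curvature $r_0$.

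For the converse, I would start from an adopted frame furnished by the preceding proposition, in which (\ref{streq1}) already holds, together with the Cartan equations (\ref{cartans}) and their exterior consequences (\ref{cstreq1}). Since $d\eta=0$, $d\theta^1=0$ and $d\theta^2=0$, the Poincar\'e lemma produces functions $t,x^1,x^2$ near $p$ such that $\eta=dt$, $\theta^1=dx^1$, $\theta^2=dx^2$, hence $\xi=\partial_t$. The integrability condition $\theta^1\wedge d\theta^3=0$ says that $\theta^3$ is part of a closed Pfaffian system with $\theta^1$, so one can write $\theta^3=df+u\,dx^1$ for some locally defined functions $f,u$; the analogous argument gives $\theta^4=dh+v\,dx^2$. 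Because the distribution $\theta^1=\theta^3=0$ is the annihilator of $\{\xi,V_2,V_4\}$ and is transverse to $\partial_{x^1}$, one can complete $(t,x^1,x^2)$ to a coordinate system by choosing $y^1=-f|_{x^1=x^1_0}+\mathrm{const}$ and $y^2=h|_{x^2=x^2_0}-\mathrm{const}$, so that after relabeling $f=f^0(x^1)-y^1$ and $h=h^0(x^2)+y^2$ for suitable functions $f^0,h^0$.

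The main obstacle is determining the precise functional form of $u$ and $v$: one must show they have the prescribed dependence $u=u^0(x^1,x^2)-t+\frac{r_0}{8}(y^1)^2$, and similarly for $v$. The input for this is the remaining Cartan equations $d\theta^3=\theta^1\wedge\theta^0+\omega\wedge\theta^2-\tau_1\wedge\theta^3$ (and its analog for $\theta^4$) together with (\ref{streq1}). Taking $d\theta^3$ in the candidate coordinates gives $d\theta^3=du\wedge dx^1$; matching the $dt\wedge dx^1$ coefficient forces $\partial u/\partial t=-1$, while matching the $\tau_1\wedge\theta^3$ term and using $\tau_1=\alpha_1\theta^1$ together with the $\eta$-Einstein values $\alpha_1=0$, $\alpha_2=-r_0/4$ obtained from (\ref{streq1}) pins down $\partial u/\partial y^1=\frac{r_0}{4}y^1$. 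Integrating these two first-order PDEs and absorbing the $x^1,x^2$-dependent constant of integration into $u^0(x^1,x^2)$ yields the stated form of $u$; the computation for $v$ is identical after replacing $\tau_1$ by $\tau_2$ and $\theta^0$ by $\sigma\theta^0$ (accounting for the factor $\sigma$ in the formula for $v$). The only remaining flexibility, namely the gauge freedom (\ref{gtran}) and the sign ambiguity from Remark \ref{xorient}, is absorbed into the choice of the origin for $y^1,y^2$ and of $f^0,h^0$, so the construction is well-defined modulo the relabelings permitted by the statement.
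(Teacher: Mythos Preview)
Your overall strategy for both directions matches the paper's: verify the Cartan structure equations~(\ref{cartans}) hold for the explicit coframe in the forward direction, and in the converse integrate $\theta^0,\theta^1,\theta^2$ via Poincar\'e, then write $\theta^3=df+u\,dx^1$, $\theta^4=dh+v\,dx^2$ and read off $u,v$ from~(\ref{cartans}). The forward direction is fine and essentially identical to the paper's treatment.

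In the converse, however, there is a genuine gap stemming from how you choose the last two coordinates. You take $y^1,y^2$ from $f,h$, whereas the paper defines them by $\alpha_1=-\tfrac{r_0}{4}y^1$, $\alpha_2=\tfrac{r_0}{4}y^2$, where $\alpha_1,\alpha_2$ are the \emph{functions} in $\tau_i=\alpha_i\theta^i$ (Proposition~\ref{lc}); the paper checks these give coordinates via $\eta\wedge d\tau_1\wedge d\tau_2\neq 0$, which uses $r_0\neq 0$ essentially. Your sentence ``the $\eta$-Einstein values $\alpha_1=0$, $\alpha_2=-r_0/4$'' conflates these connection functions with the homonymous coefficients in the decomposition~(\ref{fmdec}); the latter vanish after the gauge normalization of the preceding proposition, but the former certainly do not. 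With your choice of $y^1$ one only gets, from $d\tau_1=-\tfrac{r_0}{4}\theta^1\wedge\theta^3$ in coordinates, that $\alpha_1=-\tfrac{r_0}{4}y^1+g(x^1)$ for some function $g$, and hence matching the $\tau_1\wedge\theta^3$ term in~(\ref{cartans}) yields $\partial u/\partial y^1=\tfrac{r_0}{4}y^1-g(x^1)$, not $\tfrac{r_0}{4}y^1$. The resulting cross term $-g(x^1)y^1$ in $u$ does not fit~(\ref{locdes}) as written. This is repairable by the shift $y^1\mapsto y^1-\tfrac{4}{r_0}g(x^1)$ (and the analogous shift in $y^2$), which simultaneously restores $f=f^0(x^1)-y^1$ and brings $u$ to the stated form, but you must say so; alternatively, adopt the paper's definition of $y^1,y^2$ from the connection functions $\alpha_1,\alpha_2$, after which the relation $f=f^0(x^1)-y^1$ drops out of $d\tau_1=-\tfrac{r_0}{4}\,dx^1\wedge df$ and your PDEs for $u$ hold on the nose. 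Either way, you should also note (as the paper does) that $\omega\wedge\theta^1\wedge\theta^2=0$ before matching coefficients, since otherwise the $\omega\wedge\theta^2$ term in~(\ref{cartans}) could contaminate the $dt$ and $dy^2$ components of $du\wedge dx^1$.
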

\begin{proof}
 \noindent ($\Rightarrow$) Essentially here we use the uniqueness of the Levi-Civita connection. Assuming 
(\ref{locdes}) we find that (\ref{cartans}) are satisfied with 
\[
 \tau_1 = -\dfrac{r_0}{4}y^1dx^1, \quad \tau_2 = \dfrac{r_0}{4×}y^2dx^2, \quad 
\omega = -\dfrac{\partial u^0}{\partial x^2}dx^1+\dfrac{\partial v^0}{\partial x^1}dx^2,
\]
the connection form for the metric (\ref{locstruct}) is, by the uniqueness, given as in (\ref{OmegaMat}) so 
all the curvature expressions are valid, especially (\ref{fmdec}) and we verify this directly (\ref{streq1}) 
so the Ricci tensor is $\eta$-Einstein and the constant $r_0$ is exactly the scalar curvature.

\noindent ($\Leftarrow$) Our considerations are purely local - we may shrink if necessary a neighborhood of a some fixed point 
$p\in\mathcal{M}$ such that $\theta^1=dx^1$, $\theta^2 = dx^2$ for some locally defined functions
$x^1$, $x^2$. Moreover we can assume $d\theta^3 = u_1\wedge\theta^1 = u_1\wedge dx^1$ and 
$d\theta^4 = u_2\wedge\theta^2 = u_2\wedge dx^2$ for a local 1-forms $u_1$, $u_2$. Therefore 
the pullback $\bar\theta^3$ on a local hypersurface $x^1 = const$ is a closed form thus
\[
 \bar\theta^3 = d\bar f_{x^1},
\]
 $\bar f_{x^1}$ is a 1-parameter family of smooth functions, $x^1$ playing a role of the parameter. We
put $f(x^1,\cdot) = \bar f_{x^1}(\cdot)$ which properly defines a smooth local function on a some neighborhood
of the point $p$. The form $\theta^3$ can be written locally as
\[
 \theta^3 = df+ u dx^1 = df+ u\theta^1,
\]
for a function $u$. If we replace $\theta^1$ by $\theta^2$ and $\theta^3$ by $\theta^4$ we obtain a similar decomposition for 
the form $\theta^4$
\[
 \theta^4 = dh +v dx^2 = dh + v\theta^2.
\]
From (\ref{cartans},\ref{cstreq1}) it follows that 
\[
 \omega\wedge\theta^1\wedge\theta^2 = 0,
\]
in consequence $\omega= a\theta^1 + b\theta^2$ for functions $a$, $b$. Moreover due to (\ref{fmdec}) and our general 
assumption that the manifold is $\eta$-Einstein we have 
\[
 D\omega = d\omega-\omega\wedge(\tau_1+\tau_2) = \gamma \theta^1\wedge\theta^2,
\]
thus both $a$, $b$ must satisfy
\[
 da = a_1\theta^1+a_2\theta^2,\quad db = b_1\theta^1+b_2\theta^2,
\]
and we find 
\[
 \gamma = b_1-a_2 +a\alpha_2-b\alpha_1,
\]
where $\tau_1 = \alpha_1\theta^1$, $\tau_2=\alpha_2\theta^2$. We need a suitable coordinate system near our fixed 
point $p$. The obvious choice is to try to extend local functions $(t,x^1,x^2)$, where 
$dt = \eta=\theta^0$, $dx^1= \theta^1$, $dx^2=\theta^2$. Note 
\[
 \eta\wedge d\tau_1\wedge d\tau_2 = \dfrac{r_0^2}{16}\eta\wedge\theta^1\wedge\theta^3\wedge\theta^2\wedge\theta^4 \neq 0,
\]
everywhere from the other hand
\[
 \eta\wedge d\tau_1 \wedge d\tau_2 = \eta \wedge d\alpha_1 \wedge \theta^1\wedge d\alpha_2 \wedge\theta^2, 
\]
so the 1-forms $(\eta=dt, \theta^1=dx^1, \theta^2 = dx^2, d\alpha_1, d\alpha_2)$ are closed and pointwise linearly 
independent which suggest to treat the functions $\alpha_1$, $\alpha_2$ as additional independent variables. 
However we will use coordinates $y^1$, $y^2$ defined by $\alpha_1= -\frac{r_0}{4}y^1$, $\alpha_2= \frac{r_0}{4×}y^2$
so a 1-form $\pi=\eta-2\tau_1+2\tau_2$ which is a globally defined contact form on $\mathcal{M}$ in coordinates
$(t,x^1,x^2,y^1,y^2)$ is expressed by 
\[
 dt +\frac{r_0}{2×}y^1 dx^1 + \frac{r_0}{2}y^2 dx^2.
\]
The local functions $f$, $h$, $u$, $v$ have to be established on the base of the Cartan's equations (\ref{cartans}). In coordinates 
\[
\begin{array}{l}
du\wedge dx^1 = dx^1\wedge dt +a dx^1\wedge dx^2 +\dfrac{r_0}{4}y^1dx^1\wedge df, \\[+8pt]
dv \wedge dx^2 = \sigma dx^2 \wedge dt -b dx^2\wedge dx^1 - \dfrac{r_0}{4}y^2 dx^2\wedge dh, 
 \end{array}
\]                   
and possible solution of this system of equations are of the form as in (\ref{locdes}).
\end{proof}

\section{Manifolds with contact Ricci potential}
\label{secCont}
As we know the Ricci form $\rho$  of a hyperbolic or elliptic  manifold is closed and exact and $\rho =2d(\eta-\tau_1+\tau_2)$, (\ref{riccipot}). The 
1-form $\eta-\tau_1+\tau_2$ we call a Ricci potential.  In this section we shall describe locally manifolds for which the Ricci potential is a contact 
form on a 5-dimensional manifold, equivalently manifolds with $\eta\wedge\rho\wedge\rho\neq 0$ everywhere.  Obviously Ricci potential is not determined 
uniquely; arbitrary $-\tau_1+\tau_2+ \beta$ with $d\beta=0$ will do but our choice is dictated by an attempt to construct in natural way a contact form.  
If $-\tau_1+\tau_2+\beta$ is another contact Ricci potential then $\beta = u\eta+\beta_0$ for a nonvanishing function $u$. Taking in non-canonical way 
the Ricci potential, that is $\beta \neq \eta$ influences the contact distribution $\mathcal{C}_\beta: -\tau_1+\tau_2+\beta=0$. So, strictly speaking 
we should say about family of contact structures, also having in mind different possible choices of contact distributions. Our choice $\beta=\eta$ although 
it seems to be natural in other settings may become inconvenient. 

\begin{theorem}
\label{th-cont-ric-pot}
 Let $\mathcal{U}\subset \mathbb{R}^5 = \mathbb{R}\times \mathbb{R}^4$ be a domain and $(t,x^1,x^2,y^1,y^2)$ be global 
Cartesian coordinates on $\mathbb{R}^5$. Let define a coframe of 1-forms on $\mathcal{U}$
\begin{equation}
\label{locforms3}
 \begin{array}{l}
\theta^0 = dt, \\
\theta^1 = dx^1, \\
\theta^2 = dx^2, \\
\theta^3 = \dfrac{\partial f_1}{\partial y^1}dy^1 + (u-t+\dfrac{\partial f_1}{\partial x^1})dx^1 + 
(f_3+\dfrac{\partial f_1}{\partial x^2})dx^2, \\[+10pt]
\theta^4 = \dfrac{\partial f_2}{\partial y^2}dy^2 + (-f_3+\dfrac{\partial f_2}{\partial x^1})dx^1 +
(v-\sigma t+\dfrac{\partial f_2}{\partial x^2})dx^2,
 ×
\end{array}
\end{equation}
where the functions $(f_1,u)$, $(f_2,v)$ satisfy the following conditions 
\begin{equation}
\label{constraints}
 \begin{array}{l}
f_1=f_1(x^1,x^2,y^1), \quad \dfrac{\partial f_1}{\partial y^1} \neq 0,  \quad
\dfrac{\partial u}{\partial y^1} +y^1\dfrac{\partial f_1}{\partial y^1}= 0,  \\[+10pt]
f_2 =f_2(x^1,x^2,y^2),\quad \dfrac{\partial f_2}{\partial y^2} \neq 0, \quad
 \dfrac{\partial v}{\partial y^2}-y^2\dfrac{\partial f_2}{\partial y^2}=0, \quad \sigma =\pm 1,
 ×
\end{array}
\end{equation}
and $f_3$ is arbitrary function on $\mathcal{U}$.
Let define an almost para-contact metric structure $(\varphi,\xi,\eta,g)$ on $\mathcal{U}$
\begin{equation}
\label{struct4}
 \begin{array}{l}
\xi = \dfrac{\partial}{\partial t}, \quad \eta= \theta^0 = dt, \\[+8pt]
\varphi \theta^0 = 0, \quad \varphi\theta^1 = \theta^1,\quad \varphi\theta^2 = -\theta^2, \quad
\varphi\theta^3 = \theta^3, \quad \varphi\theta^4 = -\theta^4,\\[+4pt]
g = dt\odot dt + 2\theta^1\odot\theta^2 +2 \theta^2\odot\theta^4, 
 ×
\end{array}
\end{equation}
then $\mathcal{U}$ equipped with this structure is a weakly para-cosymplectic manifold with para-K\"ahler leaves,
hyperbolic for $\sigma=-1$, elliptic for $\sigma =+1$, and 
with a contact Ricci potential 
\begin{equation}
\label{localcontact}
 \eta -\tau_1+\tau_2 = dt +y^1dx^1+y^2dx^2.
\end{equation}
The eigen-functions $\alpha$, $\beta$ of the Ricci tensor $Ric$ are given by 
\begin{equation}
 \alpha= \left(\dfrac{\partial f_1}{\partial y^1}\right)^{-1}, \quad \beta = -\left(\dfrac{\partial f_2}{\partial y^2}\right)^{-1}.
\end{equation}
Conversely if $\mathcal{M}$ is weakly para-cosymplectic with para-K\"ahler leaves, elliptic or hyperbolic, with 
contact Ricci potential $\eta-\tau_1+\tau_2$, then near arbitrary point $p\in \mathcal{M}$ we can find a local 
coordinates system $(t,x^1,x^2,y^1,y^2)$ such that (\ref{locforms3},\ref{constraints},\ref{struct4},\ref{localcontact}) are satisfied.  
\end{theorem}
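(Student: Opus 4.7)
The proof proceeds in two directions, following the pattern of Theorem \ref{th-eta-einst}. For the forward direction, I would treat $(\theta^0,\theta^1,\theta^2,\theta^3,\theta^4)$ of (\ref{locforms3}) as a coframe on $\mathcal{U}$; linear independence is ensured by $\partial f_1/\partial y^1 \neq 0$ and $\partial f_2/\partial y^2 \neq 0$. One then verifies Cartan's structure equations (\ref{cartans}) by direct exterior differentiation. The equations for $\theta^0,\theta^1,\theta^2$ are immediate, and for $\theta^3$ the constraint $\partial u/\partial y^1 = -y^1\,\partial f_1/\partial y^1$ brings $d\theta^3$ into the required shape $dx^1\wedge dt + \omega\wedge dx^2 - \tau_1\wedge\theta^3$ with $\tau_1 = -y^1\,dx^1$; the mirror computation yields $\tau_2 = y^2\,dx^2$ from $d\theta^4$, and the two determinations of $\omega$ coincide (this pins down the role of $f_3$). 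By uniqueness of the Levi-Civita connection the connection matrix must be (\ref{OmegaMat}), so the structure is weakly para-cosymplectic with para-K\"ahler leaves by Proposition \ref{lc}. The Ricci potential $\eta-\tau_1+\tau_2 = dt+y^1\,dx^1+y^2\,dx^2$ is visibly a contact form on $\mathbb{R}^5$, and the eigenvalue formulas $\alpha = (\partial f_1/\partial y^1)^{-1}$, $\beta = -(\partial f_2/\partial y^2)^{-1}$ are read off by matching $d\tau_1 = dx^1\wedge dy^1$ against the decomposition in Proposition \ref{dtau-Dom-R-Ric}.

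For the converse, I would start with an adopted metric-dual coframe $(\eta,\theta^1,\theta^2,\theta^3,\theta^4)$ near $p$. Corollary \ref{dt12=0} and $d\eta = 0$ give $\eta = dt$, $\theta^1 = dx^1$, $\theta^2 = dx^2$ for local functions, after shrinking the neighborhood. Proposition \ref{lc} gives $\tau_1 = c_1\,dx^1$ and $\tau_2 = c_2\,dx^2$ for functions $c_1,c_2$; the substitutions $y^1 = -c_1$, $y^2 = c_2$ produce $\eta-\tau_1+\tau_2 = dt+y^1\,dx^1+y^2\,dx^2$, matching (\ref{localcontact}). The contact hypothesis $\eta\wedge d(-\tau_1+\tau_2)\wedge d(-\tau_1+\tau_2) \neq 0$ becomes $dt\wedge dx^1\wedge dx^2\wedge dy^1\wedge dy^2 \neq 0$, which guarantees that $(t,x^1,x^2,y^1,y^2)$ are legitimate local coordinates on a possibly smaller neighborhood of $p$.

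The remaining task is to extract the normal form for $\theta^3$ and $\theta^4$ in these coordinates. Expanding $\theta^3$ in the coordinate differentials and using the metric identities $\theta^3(\xi) = 0$ and $\theta^3(V_3) = 0$ together with Cartan's equation $d\theta^3 = dx^1\wedge dt+\omega\wedge dx^2-\tau_1\wedge\theta^3$ and the integrability condition $\theta^1\wedge d\theta^3 = 0$ from (\ref{cstreq1}) forces $\theta^3 = df_1+u\,dx^1+f_3\,dx^2$ with $f_1 = f_1(x^1,x^2,y^1)$; matching the $dy^1\wedge dx^1$ coefficient in $d\theta^3+\tau_1\wedge\theta^3 = dx^1\wedge dt+\omega\wedge dx^2$ yields $\partial u/\partial y^1+y^1\,\partial f_1/\partial y^1 = 0$. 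The mirror analysis for $\theta^4$ produces $f_2,v$ and the companion constraint, with the sign $\sigma$ entering via the term $\sigma g(X,V_2)\xi$ in the formula for $\nabla_X V_4$ of Proposition \ref{lc}; the common function $f_3$ appears because it is the off-diagonal component of the single 1-form $\omega$. The main obstacle is precisely this normalization step: one has to combine the eigenspace restrictions on $\theta^3,\theta^4$, the Cartan equations, the integrability conditions (\ref{cstreq1}) and the residual gauge freedom (\ref{gtran}) in a consistent way, while tracking the $\sigma$-signs and $y^i$-normalizations carefully through the reductions.
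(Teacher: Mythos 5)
Your overall plan coincides with the paper's: the forward direction by direct verification of the Cartan equations (\ref{cartans}) with $\tau_1=-y^1dx^1$, $\tau_2=y^2dx^2$ and uniqueness of the Levi-Civita connection, and the converse by taking $t,x^1,x^2$ with $dt=\eta$, $dx^i=\theta^i$, setting $y^1=-\alpha_1$, $y^2=\alpha_2$, and using $\eta\wedge d\tau_1\wedge d\tau_2\neq 0$ to get coordinates, then reading the eigenfunctions off $d\tau_1$, $d\tau_2$ against (\ref{fmdec}). However, there is a genuine gap in your normalization step for $\theta^3$, $\theta^4$: you invoke the integrability conditions (\ref{cstreq1}), i.e. $\theta^1\wedge d\theta^3=0$ and $\theta^2\wedge d\theta^4=0$. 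Those conditions are specific to the $\eta$-Einstein case -- they are obtained by differentiating (\ref{streq1}) and need $r_0\neq 0$ there -- and they are false in the present setting. From (\ref{cartans}) one has $\theta^1\wedge d\theta^3=\theta^1\wedge\omega\wedge\theta^2$, which vanishes only when $\omega$ lies in the span of $\theta^1,\theta^2$; but for manifolds with contact Ricci potential $\omega=df_3+u_3\theta^1+v_3\theta^2$ with $f_3$ an \emph{arbitrary} function, and indeed a direct computation from (\ref{locforms3}) gives $\theta^1\wedge d\theta^3=0$ only if $f_3$ is independent of $t,y^1,y^2$. So if you carried out your step literally you would force $\omega\wedge\theta^1\wedge\theta^2=0$ and $f_3=f_3(x^1,x^2)$, i.e. you would classify only a proper subfamily, contradicting the generality of $f_3$ asserted in the statement (your final normal form is the right one, but it does not follow from the premise you cite).

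The correct replacement is the weaker set of conditions (\ref{wedges}), $d\theta^3\wedge\theta^1\wedge\theta^2=d\theta^4\wedge\theta^1\wedge\theta^2=d\omega\wedge\theta^1\wedge\theta^2=0$, which do follow from (\ref{cartans}) because $\tau_1\wedge\theta^1=\tau_2\wedge\theta^2=0$. These give, by a relative Poincar\'e lemma, $\theta^3=df_1'+u_1'dx^1+v_1'dx^2$, $\theta^4=df_2'+u_2'dx^1+v_2'dx^2$, $\omega=df_3'+u_3'dx^1+v_3'dx^2$ with coefficients a priori depending on all five coordinates. One must then substitute back into (\ref{cartans}), wedge with $dx^1$ and $dx^2$ to obtain (\ref{intcond3}), and differentiate those relations: this is precisely what pins down $f_1'=f_1'(x^1,x^2,y^1)$, $f_2'=f_2'(x^1,x^2,y^2)$, $u_1'=u_1'^0-t$, $v_2'=v_2'^0-\sigma t$, the constraints $\partial u/\partial y^1+y^1\partial f_1/\partial y^1=0$, $\partial v/\partial y^2-y^2\partial f_2/\partial y^2=0$, and identifies $f_3$ as the common ``potential'' component of $\omega$ entering $\theta^3$ and $\theta^4$ with opposite signs. (Your appeal to $\theta^3(\xi)=\theta^3(V_3)=0$ and to the residual gauge freedom (\ref{gtran}) is not needed for this; the paper absorbs the remaining terms simply by redefining $f_1,f_2,u,v$.) The rest of your sketch -- the forward verification and the eigenvalue identification via $d\tau_1=-dy^1\wedge\theta^1$, $d\tau_2=dy^2\wedge\theta^2$ expanded in the adopted coframe -- matches the paper's argument.
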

\begin{proof}
 \noindent ($\Rightarrow$) Proof here goes as in   the proof of the Theorem \ref{th-eta-einst}. We directly verify that 
(\ref{locforms3}, \ref{constraints}, \ref{struct4}) define an almost para-cosymplectic structure on $\mathcal{U}$ satisfying the 
Cartan's structure equations (\ref{cartans}) with the 1-forms $\tau_1$, $\tau_2$, $\omega$ given by 
\[
 \tau_1 = -y^1dx^1, \quad \tau_2 = y^2dx^2, 
\]
 $\omega$ can be described in the following way
\begin{equation}
\label{omega}
 \omega = df_3 +u_3dx^1+v_3dx^2= df_3+u_3\theta^1+v_3\theta^2,
\end{equation}
and the functions $u_3$, $v_3$ are defined by the identities
\begin{equation}
\label{u3v3}
  \begin{array}{r}
-\dfrac{\partial u}{\partial x^2}-y^1(\dfrac{\partial f_1}{\partial x^2×} +f_3) =u_3, \\[+10pt]
\dfrac{\partial v}{\partial x^1×} -y^2(\dfrac{\partial f_2}{\partial x^1×} -f_3) = v_3,
 ×
\end{array}
\end{equation}

\noindent ($\Leftarrow$) The main line of local classification of such manifolds lies in the fact that we are able 
to choose in convenient way a local coordinates system $(t,x^1,x^2,y^1,y^2)$ similarly as it was done in the classification of $\eta$-Einstein manifolds
\begin{equation}
\label{matchcoord}
 \eta=\theta^0=dt, \quad \theta^1=dx^1, \quad \theta^2 = dx^2 , \quad \tau_1 = -y^1dx^1, \quad \tau_2 = y^2 dx^2. 
\end{equation}
 The condition 
of the existence of the contact Ricci potential can be written in several equivalent forms 
\[
 \eta\wedge\rho\wedge\rho \neq 0 \quad \Leftrightarrow \quad \eta\wedge d\tau_1\wedge d\tau_2\neq 0 \quad\Leftrightarrow \quad r= -2(\alpha+\beta),\;\; 
\alpha\beta\neq 0,
\]
 the last condition simply means that the eigenvalues of the Ricci tensor are non-zero at each point, except obviously the eigenvalue 
which corresponds to $\xi$, as Ricci tensor is always degenerate and $Ric(\xi,\cdot)=0$. In the other words the Ricci form has contact 
potential if and only if the kernel of the Ricci tensor is 1-dimensional, and therefore is spanned by $\xi$.  The presence of the 
coordinates  (\ref{matchcoord}) allows us in efficient way to analyze the Cartan structure equations (\ref{cartans}). 
At first we note  as consequences of these  equations that
\begin{equation}
 \label{wedges}
d\theta^3\wedge\theta^1\wedge\theta^2= d\theta^4\wedge\theta^1\wedge\theta^2 = d\omega\wedge\theta^1\wedge\theta^2 = 0,
\end{equation}
thus locally forms $(\theta^3, \theta^4,\omega)$ are given by
\begin{equation}
 \label{locforms}
\begin{array}{l}
 \theta^3 = df_1' + u_1' dx^1 +v_1' dx^2, \\
 \theta^4 = df_2' + u_2' dx^1 +v_2' dx^2,\\
 \omega = df_3' + u_3' dx^1 + v_3' dx^2
 ×
\end{array}
\end{equation}
where $(f_i')=f_i'(t,x^1,x^2,y^1,y^2))$, $(u_i')=(u_i'(t,x^1,x^2,y^1,y^2))$, $(v_i')=(v_i'(t,x^1,x^2,y^1,y^2))$ are locally defined 
functions which have to satisfy some additional integrability conditions according to (\ref{cartans}). Plugging (\ref{locforms})
into (\ref{cartans}) and multiplying exteriorly obtained equations by $dx^1$ and $dx^2$ we get the following conditions 
\begin{equation}
\label{intcond3}
 \begin{array}{rcl}
(du_1'+dt +y^1df_1')\wedge dx^1\wedge dx^2 &=& 0, \\
(dv_1'-df_3')\wedge dx^1 \wedge dx^2 &=& 0, \\
(du_2' +df_3')\wedge dx^1 \wedge dx^2 &=& 0, \\
(dv_2'+\sigma dt-y^2df_2') \wedge dx^1 \wedge dx^2 &=& 0.
 ×
\end{array} 
\end{equation}

In what follow given a function $F$ by $F_{y^i}$, $F_{x^i}$, $i=1,2$ we denote its first partial derivatives with respect to coordinate 
functions $(x^1,x^2,y^1,y^2)$, so for example $f'_{1y^1} = \partial f'_1/\partial y^1$.
The exterior differential of the first and fourth equations follow 
\[
 df_1'\wedge dy^1\wedge dx^1\wedge dx^2=0, \quad df_2' \wedge dy^2\wedge dx^1 \wedge dx^2 = 0,
\]
thus $f_1'=f_1'(y^1,x^1,x^2)$ and $f_2'= f_2'(y^2,x^1,x^2)$ in consequence 
\[
 u_1' = u_1^{'0}(y^1,x^1,x^2) - t, \quad v_2' = v_2^{'0}(y^2,x^1,x^2)-\sigma t,
\]
and functions $u_1^{'0}$, $v_2^{'0}$ have to satisfy 
\[
 u_{1y^1}^{'0} +y^1 f'_{1y^1}=0, \quad
v^{'0}_{2y^2} -y^2f'_{2y^2} =0.
\]
From the second and the third equations of (\ref{intcond3}) we find 
\[
 v_1' = f_3' + v_1^{'0}(x^1,x^2), \quad u_2' = -f_3+u_2^{'0}(x^1,x^2). 
\]
The function $f_3'$ is arbitrary however for the functions $u_3'$, $v_3'$ again from (\ref{cartans}) we obtain the following 
relations 
\[
 \begin{array}{l}
-u'_{1x^2}+\dfrac{\partial (v_1'-f_3')}{\partial x^1×} 
-y^1(f'_{1x^2×} +v'_1) =u_3', \\[+10pt]
-\dfrac{\partial (u_2'+f_3')}{\partial x^2×} +v'_{2x^1×} 
-y^2(f'_{2x^1×} + u_2') =v_3',
 ×
\end{array}
\]
we treat these relations as definitions for $u_3'$ and $v_3'$.  Now (\ref{locforms}) can be written in the following form
\begin{equation}
 \label{locforms2}
\begin{array}{l}
\theta^3 = f'_{1y^1}dy^1 +(u_1^{'0}-t+f'_{1x^1})dx^1 + 
(f_3'+v_1^{'0}+f'_{1 x^2})dx^2, \\[+10pt]
\theta^4 = f'_{2y^2}dy^2 + (-f_3'+u_2^{'0}+f'_{2x^1})dx^1 + 
(v_2^{'0}-\sigma t + f'_{2x^2})dx^2,
 ×
\end{array}
\end{equation}
with obvious conditions $f'_{1y^1×}\neq 0$, $ f'_{2y^2×} \neq 0$ everywhere. 

Let $f_1$, $f_2$, $u$, $v$ 
are functions defined by 
\[
 \begin{array}{l}
\begin{cases}
 f_{1x^2} & = v_1^{'0}+f'_{1 x^2},\\[+6pt]
 f_{1y^1} & = f'_{1y^1},
\end{cases}
\qquad
\begin{cases}
f_{2x^1} & = u_2^{'0}+f'_{2x^1},\\[+6pt]
f_{2y^2} & = f'_{2y^2}, 
\end{cases}
\quad
\\[+24pt]
u = u_1^{'0}+f'_{1x^1}-f_{1x^1},\quad
v = v_2^{'0}+ f_{2x^2}-f_{2x^2},
 ×
\end{array}
\]
then the forms in (\ref{locforms2}) can be written in simpler way, $f_3 = f_3'$
\[
\begin{array}{l}
\theta^3 = f_{1y^1}dy^1+ (u-t+f_{1x^1})dx^1 
+ (f_3 + f_{1x^2})dx^2,\\[+8pt]
\theta^4 = f_{2y^2}dy^2+ (-f_3+f_{2x^1})dx^1 + 
(v-\sigma t +f_{2x^2})dx^2,
 ×
\end{array}
\]
and consequently for the functions $u_3 = u_3'$, $v_3 = v_3'$ in (\ref{u3v3}) we find
\[
\begin{array}{r}
-u_{x^2}-y^1(f_3+f_{1x^2}) = u_3,\\[+6pt]
  v_{x^1} -y^2(-f_3+f_{2x^1}) = v_3,
 ×
\end{array}
\]

The functions $f_{1y^1}$, $f_{2y^2}$ are directly related to eigenvalues functions $\alpha_2$, $\beta_2$ of the Ricci tensor as
\[
 d\tau_1 = -dy^1\wedge dx^1 = -dy^1\wedge \theta^1, \quad d\tau_2 = dy^2\wedge dx^2 = dy^2\wedge \theta^2, 
\]
expressing $dy^1$ and $dy^2$ through the forms $(\eta=\theta^0, \theta^1,\ldots,\theta^4)$ we find 
\begin{equation}
\label{dy1dy2}
 \begin{array}{l}
dy^1 = \dfrac{1}{f_{1y^1}×}\theta^3 
-(\dfrac{u-t+f_{1x^1}}{f_{1y^1}×})\theta^1 
- (\dfrac{f_3+f_{1x^2}}{f_{1y^1}×})\theta^2, \\[+12pt]
dy^2 = \dfrac{1}{f_{2y^2}×}\theta^4 
 -(\dfrac{-f_3+f_{2x^1}}{f_{2y^2}×})\theta^1 
- (\dfrac{v-\sigma t+f_{2x^2}}{f_{2y^2}×})\theta^2,
 ×
\end{array}
\end{equation}
hence 
\begin{equation}
\label{dt1dt2-100}
\begin{array}{l}
d\tau_1 = -(\dfrac{f_3+f_{1x^2}}{f_{1y^1}×})\theta^1\wedge \theta^2 
           + \dfrac{1}{f_{1y^1}×}\theta^1\wedge\theta^3, \\[+10pt]
d\tau_2 = -(\dfrac{-f_3+f_{2x^1}}{f_{2y^2}×})\theta^1\wedge \theta^2 
          -\dfrac{1}{f_{2y^2}×}\theta^2 \wedge \theta^4,
\end{array}
\end{equation}
which being compared to (\ref{fmdec}) follows 
\[
 \alpha_2 = \dfrac{1}{f_{1y^1}},\quad \beta_2 = -\dfrac{1}{f_{2y^2}},
\]
and for the Ricci tensor $Ric$ and scalar curvature $r$ we obtain (\ref{fmdec},\ref{ric},\ref{scal}) 
\[
\begin{array}{rcl}
Ric &=& -\dfrac{2}{f_{1y^1}}\theta^1\odot \theta^3 + \dfrac{2}{f_{2y^2}}\theta^2\odot\theta^4 
- 2(\alpha_1-\beta_1)\theta^1\odot\theta^2, \\[+10pt]
&& \alpha_1-\beta_1 = \left(\dfrac{f_{2x^1}}{f_{2y^2}} -\dfrac{f_{1x^2×}}{f_{1y^1}}\right)
- f_3\left(\dfrac{f_{1y^1}+f_{2y^2}}{f_{1y^1}f_{2y^2}}\right)×, \\[+10pt]
r &=& 2\dfrac{f_{1y^1}-f_{2y^2}}{f_{1y^1}f_{2y^2}},
 ×
\end{array}
\]
\end{proof}

\subsection{Generalized $\eta$-Einstein manifolds}
In the decomposition of the Ricci tensor (\ref{ric}) the part 
\[
 Ric_0 = -2(\alpha_1-\beta_1)\theta^1\odot\theta^2,
\]
is geometrically significant only in the points where the eigenfunctions $\alpha_2$, $\beta_2$ of the Ricci tensor are equal 
$\alpha_2= \beta_2 = -r/4$. If at a point $p\in \mathcal{M}$ $\alpha_2(p) \neq \beta_2(p)$ then in a small neighborhood 
$\mathcal{U}_p$, $\alpha_2 \neq \beta_2$ and we can choose a new adopted coframe $(\theta^0=\eta, \theta^{1'},\ldots,\theta^{4'})$
on $\mathcal{U}_p$ by appropriate gauge transform, such that in this new frame $Ric_0' = 0$ everywhere on $\mathcal{U}_p$. 

The manifold $\mathcal{M}$ is said to be generalized $\eta$-Einstein if the Ricci tensor satisfies the following condition
\begin{equation}
 Ric(X,Y) = \dfrac{r}{4}(g(X,Y)-\eta(X)\eta(Y)) + Ric_0(X,Y),
\end{equation}
where $r$ is scalar curvature, $r \neq 0$ everywhere, and  
\[
   Ric_0 = -2(\alpha_1-\beta_1) \theta^1\odot\theta^2.
\]
The possible case $r=0$ identically we treat as degenerate because such manifolds cannot have contact Ricci potential, so 
the Theorem  \ref{th-cont-ric-pot} does not apply here. Such manifolds have to be treated separately. The local classification
of manifolds with contact Ricci potential and generalized $\eta$-Einstein comes directly from the Theorem \ref{th-cont-ric-pot}.
\begin{corollary}
 In the denotations of the Theorem \ref{th-cont-ric-pot} for the manifold $\mathcal{M}$ to be generalized $\eta$-Einstein 
it is sufficient and enough that the functions $(f_1,u)$, $(f_2,v)$ be such that 
\begin{equation}
 \begin{array}{l}
f_1 = A_1+By^1, \quad u = C_1-B\dfrac{(y^1)^2}{2}, \\[+8pt]
f_2 = A_2-By^2, \quad v = C_2-B\dfrac{(y^2)^2}{2},
 ×
\end{array}
\end{equation}
 $(A_1,C_1)$, $(A_2,C_2)$ are arbitrary functions of $(x^1,x^2)$ only and $B=B(x^1,x^2)\neq 0$ is nonvanishing everywhere.  Locally
\begin{equation}
 \begin{array}{rcl}
Ric(X,Y) &=& \dfrac{1}{B}\left(g(X,Y)-\eta(X)\eta(Y)\right)+Ric_0, \\[+10pt]
Ric_0 &=& -\dfrac{2}{B}\left(f_{1x^2}+f_{2x^1}\right)\;\theta^1\odot\theta^2,\\
r &=& \dfrac{4}{B}.
 ×
\end{array}
\end{equation}
\end{corollary}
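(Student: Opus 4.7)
The plan is to specialize the parametrization of Theorem \ref{th-cont-ric-pot} to the generalized $\eta$-Einstein case by identifying which choices of the free functions $(f_1,u)$, $(f_2,v)$, $f_3$ from that statement are compatible with the algebraic condition that the two non-trivial Ricci eigenfunctions coincide.

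First I would invoke the formulas at the end of the proof of Theorem \ref{th-cont-ric-pot}, according to which in the coordinates $(t,x^1,x^2,y^1,y^2)$ the eigenfunctions of the Ricci operator are
\[
\alpha_2 = \frac{1}{\partial f_1/\partial y^1}, \qquad \beta_2 = -\frac{1}{\partial f_2/\partial y^2}.
\]
Being generalized $\eta$-Einstein amounts precisely to $\alpha_2=\beta_2$ (each then equal to $-r/4$), which rewrites as $\partial f_1/\partial y^1 = -\partial f_2/\partial y^2$. Since the left side depends only on $(x^1,x^2,y^1)$ and the right side only on $(x^1,x^2,y^2)$, separation of variables forces both to depend on $(x^1,x^2)$ alone; calling this common value $B(x^1,x^2)$, one has $\partial f_1/\partial y^1 = B$ and $\partial f_2/\partial y^2 = -B$, with $B$ nowhere vanishing because $\partial f_1/\partial y^1$ never does. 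Integration in $y^1$ and $y^2$ then gives the stated normal forms $f_1 = A_1(x^1,x^2) + B(x^1,x^2)y^1$ and $f_2 = A_2(x^1,x^2) - B(x^1,x^2)y^2$.

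Second, I would feed these expressions into the constraints (\ref{constraints}): with $\partial f_1/\partial y^1 = B$ and $\partial f_2/\partial y^2 = -B$ the defining relations for $u$ and $v$ become the linear equations $\partial u/\partial y^1 = -B y^1$ and $\partial v/\partial y^2 = -B y^2$, which integrate at once to $u = C_1(x^1,x^2) - B(y^1)^2/2$ and $v = C_2(x^1,x^2) - B(y^2)^2/2$. The auxiliary function $f_3$ plays no role in the condition $\alpha_2 = \beta_2$ and is not further constrained by generalized $\eta$-Einstein, so it remains free as stated.

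Finally, the closed-form expressions for $Ric$, $Ric_0$ and $r$ announced in the corollary follow by direct substitution of $f_{1y^1}=B$ and $f_{2y^2}=-B$ into the Ricci, $(\alpha_1-\beta_1)$ and scalar-curvature formulas given at the end of the proof of Theorem \ref{th-cont-ric-pot}; in particular the coefficient $(f_{1y^1}+f_{2y^2})/(f_{1y^1}f_{2y^2})$ that multiplies $f_3$ in $\alpha_1-\beta_1$ vanishes identically, which is why $f_3$ drops out of $Ric_0$. The converse implication is the same substitution read in reverse. No serious obstacle appears: the whole argument reduces to the separation-of-variables observation of the first paragraph together with a routine integration and a verification calculation.
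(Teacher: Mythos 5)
Your proof is correct and follows exactly the route the paper intends: the paper gives no separate argument (it only asserts that the corollary ``comes directly from'' Theorem \ref{th-cont-ric-pot}), and your steps --- identifying generalized $\eta$-Einstein with $\alpha_2=\beta_2$, i.e.\ $f_{1y^1}=-f_{2y^2}$, the separation-of-variables observation giving $B=B(x^1,x^2)\neq 0$, and the integration of the constraints (\ref{constraints}) for $u$, $v$, with $f_3$ left free --- constitute precisely that direct derivation. One caveat: substituting $f_{1y^1}=B$, $f_{2y^2}=-B$ into the formulas at the end of the theorem's proof actually yields $r=-4/B$, $Ric=-\tfrac{1}{B}\left(g-\eta\otimes\eta\right)+Ric_0$ and $Ric_0=+\tfrac{2}{B}\left(f_{1x^2}+f_{2x^1}\right)\theta^1\odot\theta^2$, so the corollary's displayed formulas correspond to the opposite sign convention for $B$ (equivalently $f_1=A_1-By^1$, $f_2=A_2+By^2$, $u=C_1+B(y^1)^2/2$, $v=C_2+B(y^2)^2/2$); this is an internal sign slip in the paper's statement, not a gap in your argument, but your claim that the printed expressions follow ``by direct substitution'' holds only after this harmless replacement $B\mapsto -B$.
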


\section{Locally flat manifolds}
\label{secFlat}
\subsection{Local classification}
According to (\ref{curvf}) a manifold is locally flat if and only if the  forms are closed  
\[
 d\tau_1 = 0, \quad d\tau_2 = 0,
\]
and the 2-form $C'=D\omega +\sigma \theta^1\wedge\theta^2$ vanishes $C'=0$. Note that under the assumption 
that $\tau_1$, $\tau_2$ are closed the form $D\omega$ is gauge-invariant. Thus we have to resolve the following 
system of equations 
\begin{equation}
 \label{locfC}
d\tau_1 = 0, \quad d\tau_2 = 0, \quad  D\omega = -\sigma \theta^1\wedge\theta^2.
\end{equation}
By the definition $D\omega = d\omega-\omega\wedge(\tau_1+\tau_2)$, so we have the equation
\[
 d\omega - \omega\wedge(\tau_1+\tau_2) = -\sigma\theta^1\wedge\theta^2,
\]
which yields $d\omega\wedge (\tau_1+\tau_2)=0$ as the right hand of the above equation is a closed form. We have now two possibilities depending on $\tau_1\wedge\tau_2$ is zero or not. So let assume that $\tau_1\wedge\tau_2\neq 0$ on an open
submanifold $\mathcal{U}\subset \mathcal{M}$ then
\[
\tau_1 = \alpha_1\theta^1,\quad \tau_2=\alpha_2\theta^2,\quad \alpha_1\alpha_2\neq 0,
\]
  and $d\omega = f\theta^1\wedge\theta^2$ for a function $f$. Thus
\begin{equation}
\label{Domlflat}
 D\omega = f\theta^1\wedge\theta^2-\omega\wedge(\alpha_1\theta^1+\alpha_2\theta^2) = -\sigma\theta^1\wedge\theta^2,
\end{equation}
and in consequence 
\begin{equation}
\label{exomeg1}
\omega = a\theta^1+b\theta^2+df',  
\end{equation}
 for functions $a,b$, $f'$ and $da = f_1\theta^1\wedge\theta^2$,
$db = f_2\theta^1\wedge\theta^2$, $f = f_2-f_1$. At first we consider the case $df'$ is zero identically, $df'=0$. Under this assumption 
 plugging (\ref{exomeg1}) into (\ref{cartans}) we find that 
\[
 \theta^1\wedge d\theta^3 = 0, \quad \theta^2\wedge d\theta^4 = 0,
\]
hence 
\[
 \theta^3 = u\theta^1+dy^1, \quad \theta^4 = v\theta^2+dy^2,
\]
where $y^1$, $y^2$ are functions. As $(\theta^0=\eta,\theta^1,\ldots,\theta^4)$ is a local 
coframe we find that $\theta^0\wedge\theta^1\wedge\theta^2\wedge dy^1\wedge dy^2 \neq 0$ everywhere, so 
we may pick $(t,x^1,x^2,y^1,y^2)$ as a local coordinates system where $\eta =dt$, $\theta^1=dx^1$, 
$\theta^2 = dx^2$. Now the Cartan equations (\ref{cartans}) impose constraints on functions $(u,v)$, $(\alpha_1,\alpha_2)$
and $(a,b)$ and their derivatives
\begin{equation}
 \begin{array}{c}
du\wedge dx^1 = dx^1\wedge dt +a dx^1\wedge dx^2 -\alpha_1 dx^1\wedge dy^1, \\[+4pt]
dv\wedge dx^2 = \sigma dx^2 \wedge dt +b dx^1 \wedge dx^2 -\alpha_2 dx^2 \wedge dy^2,
 ×
\end{array}
\end{equation}
 Providing similar computations as in the cases of $\eta$-Einstein manifolds or manifolds 
with contact Ricci potential we obtain following  forms for these functions in terms of the given local coordinates 
\begin{equation}
\label{solvlflat}
 \begin{array}{c}
\alpha_1 = \alpha_1(x^1) , \quad \alpha_2 = \alpha_2(x^2), \quad \alpha_1\alpha_2 \neq 0, \\[+4pt]
 u(t,x^1,x^2,y^1) = \alpha_1 y^1 - t +A(x^1,x^2),\quad a(x^1,x^2) = -\partial_{x^2}A, \\[+4pt]
 v(t,x^1,x^2,y^2) = \alpha_2 y^2 -\sigma t +B(x^1,x^2), \quad b(x^1,x^2) = \partial_{x^1}B, 
 ×
\end{array}
\end{equation}
for $D\omega$ we find
\[
   D\omega = \left(\partial^2_{x^2}A+\partial^2_{x^1}B +\alpha_2\partial_{x^2}A+\alpha_1\partial_{x^1}B\right)dx^1\wedge dx^2,
\]
hence according to (\ref{Domlflat}) the functions $ A, B$ have to satisfy the following 
second order linear differential non-homogeneous equation
\begin{equation}
 \label{lflatC1}
  \partial^2_{x^2}A+\partial^2_{x^1}B +\alpha_2\partial_{x^2}A+\alpha_1\partial_{x^1}B = -\sigma,  \quad \sigma = \pm 1.
\end{equation}
Now we are ready to consider general case $\omega=a\theta^1+b\theta^2+df'$ with $df'\neq 0$. We simple notice that it is always possible 
to choose a gauge - to pass to another adopted local coframe according to (\ref{gtran}) - then $\omega$ is changing to a form 
$\omega'$ (\ref{gauge}) and we can assure that $\omega' = a'\theta^1+b'\theta^2$.   

If $\tau_1\wedge \tau_2 = 0$ on an open submanifold $\mathcal{U}\subset \mathcal{M}$ then $\tau_1 = 0$ or $\tau_2=0$ 
on $\mathcal{U}$.  Let assume that both $\tau_1$, $\tau_2$ are vanishing. Then $d\omega = -\sigma \theta^1\wedge\theta^2$ thus 
\begin{equation}
 \label{exomeg2}
\omega = a\theta^1+  b \theta^2 + df',
\end{equation}
 $f'$ is a function on $\mathcal{U}$.  Considering (\ref{solvlflat}) we observe that setting $\alpha_1=0$, $\alpha_2=0$ in (\ref{solvlflat}) we obtain particular  solution with $\tau_1=0$, $\tau_2=0$ and $f'=0$. Now we want to extend this solution for arbitrary $f'$. Our first attempt is to make an ansatz
\begin{equation}
\label{ext3t4}
\theta^3 = (-t+A)\theta^1+f'\theta^2 +dy^1, \quad \theta^4 = (-\sigma t+B)\theta^2-f'\theta^1+dy^2, 
\end{equation}
as before $A=A(x^1,x^2)$, $B=B(x^1,x^2)$, we directly verify that now $(\theta^0=\eta=dt, \theta^1=dx^1,\theta^2=dx^2,\theta^3,\theta^4)$ satisfies the equations 
(\ref{cartans}) with $\omega$ as given in (\ref{exomeg2}) with $a= -\partial_{x^2}A$, $b=\partial_{x^1}B$ and the curvature vanishes if and only if 
\[
 \partial^2_{x^2}A+\partial^2_{x^1}B = -\sigma.
\]
If $\theta^{'3}$, $\theta^{'4}$ are  another local solutions of (\ref{cartans}) with the the same $\omega$ then
differences $\theta^3-\theta^{'3}$, $\theta^4-\theta^{'4}$ are closed forms thus if necessary shrinking $\mathcal{U}$ 
we can write 
\[
 \theta^{'3} = (-t+A)\theta^1+f'\theta^2+dy^1 + dc_1, \quad \theta^{'4} = (-\sigma t+B)\theta^2-f'\theta^1+dy^2 +dc_2,
\]
for functions $c_1$, $c_2$. The coordinate change $ y^1 \mapsto y^{'1} = y^1+c_1$, $y^2 \mapsto y^{'2} = y^2+c_2$ brings the forms $\theta^{'3}$, $\theta^{'4}$ to the same expression as in (\ref{ext3t4}).

Finally let $\tau_1\neq 0$ and $\tau_2=0$ everywhere on $\mathcal{U}$. In this case we find that
\[
 \omega = a\theta^1+df',
\]
for functions $a$, $f'$. Instead of proceeding directly we are looking for a suitable adopted coframe. 
The gauge formula (\ref{gauge}) for $\omega$ simplifies to 
\[
 \omega' = \omega +\alpha \tau_1+d\alpha,
\]
for $\alpha = -a/\alpha_1$ we obtain $\omega' = df'+d\alpha$ and in consequence $d\omega'=0$. As $d\tau_1=d\tau_2=0$
we have $D\omega' = D\omega$ (see Corollary \ref{pKDom}), in such case we say that $D\omega$ is gauge-invariant. 
Summarizing our considerations we see that always there is a local adopted coframe $(\theta^0=\eta, \theta^1,\ldots,
\theta^4)$ with $\omega$ closed. So let fix such adopted coframe then 
\[
 D\omega = -\omega\wedge \tau_1 = -\sigma \theta^1\wedge\theta^2,
\]
 which yields $\omega = a\theta^1-(\sigma/\alpha_1)\theta^2$, for a function $a$, ($\tau_1=\alpha_1\theta^1$),
from $d\omega =0$ we find
\begin{equation}
\label{exa}
 a(x^1,x^2) =-\sigma x^2\partial_{x^1}(\dfrac{1}{\alpha_1}) + C,
\end{equation}
here $C$ is a function of $x^1$ only and as earlier $\theta^1=dx^1$, $\theta=dx^2$. Again the shape of $\omega$ and 
the equations (\ref{cartans}) follow $d\theta^3\wedge\theta^1=0$, $d\theta^4\wedge\theta^2=0$, thus we are looking 
for solutions of the form
\[
 \theta^3 = u\theta^1+dy^1, \quad \theta^4 = v\theta^2+dy^2,
\]
$u$, $v$ are functions determined by (\ref{cartans}) and (\ref{exa}). Some simple considerations follow 
\begin{equation}
 \begin{array}{c}
u= \alpha_1y^1-t +A(x^2)^2+C x^2+D, \quad v= -\sigma(t+s) +B, \\[+6pt]
 \partial_{x^1}s = \dfrac{1}{\alpha_1},\quad  A =\dfrac{\sigma}{2×}\,\partial^2_{x^1}s,\quad s=s(x^1),
 ×
\end{array}
\end{equation}
and $D=D(x^1)$, $B=B(x^2)$.
\begin{theorem}
\label{exthlocd}
 Let $(\mathcal{M},\varphi, \xi, \eta,g)$ be a locally flat manifold. Then near each point $p\in \mathcal{M}$ there are 
local coordinates $(t,x^1,x^2,y^1,y^2)$ and local adopted coframe $(\theta^0=\eta, \theta^1,\ldots,\theta^4)$, such that
\[
\begin{array}{c}
\theta^0 =dt,\quad \theta^1=dx^1,\quad \theta^2 = dx^2, \\[+4pt]
\theta^3= u\theta^1+dy^1, \quad \theta^4= v\theta^2+dy^2,
 ×
\end{array}
\]
the functions $(u,v)$ have general form 
\begin{equation}
\label{exlocuv}
 u = \alpha_1y^1-t +A(x^1,x^2),\quad v = \alpha_2y^2-\sigma t +B(x^1,x^2),
\end{equation}
where
\[
 \tau_1 = \alpha_1\theta^1,\quad \tau_2 = \alpha_2\theta^2, \quad d\tau_1  = d\tau_2 = 0,
\]
and the functions $(A,B)$ satisfy the following condition
\[
 \partial^2_{x^2}A+\partial^2_{x^1}B+\alpha_2\partial_{x^2}A+\alpha_1\partial_{x^1}B = -\sigma.
\]
\end{theorem}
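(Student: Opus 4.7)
The plan is to translate the hypothesis of local flatness into the system (\ref{locfC}) obtained from (\ref{curvf}), namely $d\tau_1 = 0$, $d\tau_2 = 0$ and $D\omega = -\sigma\,\theta^1\wedge\theta^2$, and then to reduce the surviving freedom by a combination of adapted coordinates and a gauge transformation of the adopted coframe so that the remaining data are exactly $(\alpha_1,\alpha_2,A,B)$ satisfying the displayed PDE. First I would choose coordinates: since $\eta$, $\theta^1$, $\theta^2$ are all closed (the first by the almost para-cosymplectic assumption, the latter two by Corollary \ref{dt12=0}), Poincar\'e's lemma yields local functions $t,x^1,x^2$ with $\eta=dt$, $\theta^1=dx^1$, $\theta^2=dx^2$. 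From $d\tau_1 = 0$ and $\tau_1\wedge\theta^1=0$ I get $\tau_1=\alpha_1(x^1)\theta^1$, and analogously $\tau_2 = \alpha_2(x^2)\theta^2$.

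Next I would fix the gauge. Write $\omega = a\theta^1+b\theta^2+df'$ locally. In the generic case $\alpha_1\alpha_2\neq 0$ the transformation law (\ref{gauge}), $\omega \mapsto \omega + \alpha(\tau_1+\tau_2) + d\alpha$, is a first-order linear equation in $\alpha$ with non-vanishing coefficient, and can be solved to kill the exact part $df'$, reducing $\omega$ to $a\theta^1+b\theta^2$. In the degenerate strata where one or both $\tau_i$ vanish, the argument given just before the statement of the theorem either performs an analogous gauge normalization with a remaining scalar ambiguity or absorbs the residual closed form into a coordinate shift $y^i\mapsto y^i+c_i$ on the yet-to-be-introduced coordinates. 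In all cases we may assume $\omega = a\theta^1+b\theta^2$ in the adopted frame.

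Third, from the Cartan equations (\ref{cartans}) and this normalized $\omega$ I would deduce $\theta^1\wedge d\theta^3 = 0$ and $\theta^2\wedge d\theta^4 = 0$, so that $\theta^3 = u\theta^1 + dy^1$ and $\theta^4 = v\theta^2 + dy^2$ for some local functions $u,v,y^1,y^2$; independence of $(t,x^1,x^2,y^1,y^2)$ as a coordinate system follows from the fact that $(\theta^0,\ldots,\theta^4)$ is a coframe (so $\eta\wedge\theta^1\wedge\theta^2\wedge dy^1\wedge dy^2\neq 0$ everywhere). Plugging back into the Cartan equations for $d\theta^3$ and $d\theta^4$ and comparing coefficients of $dt\wedge dx^i$, $dx^1\wedge dx^2$ and $dy^i\wedge dx^i$ forces $u = \alpha_1 y^1 - t + A(x^1,x^2)$, $v = \alpha_2 y^2 - \sigma t + B(x^1,x^2)$, with $a = -\partial_{x^2}A$ and $b = \partial_{x^1}B$. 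Finally, substituting this $\omega$ into $D\omega = d\omega - \omega\wedge(\tau_1+\tau_2) = -\sigma\,\theta^1\wedge\theta^2$ produces the single scalar equation $\partial^2_{x^2}A+\partial^2_{x^1}B+\alpha_2\partial_{x^2}A+\alpha_1\partial_{x^1}B = -\sigma$.

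The main technical obstacle is the gauge normalization when $\tau_1\wedge\tau_2=0$ on an open set: the clean division $\alpha = -a/\alpha_1$ used in the generic case breaks down, and one must handle the cases $\tau_1=\tau_2=0$ and $\tau_1\neq 0$, $\tau_2=0$ (and the symmetric one) separately, exploiting the gauge-invariance of $D\omega$ observed in Corollary \ref{pKDom} together with coordinate translations in $y^1,y^2$. A minor additional check is the consistency of the coordinate normalization $\eta=dt$, $\theta^i=dx^i$ with the gauge choice made for $\omega$, but since the gauge transformations fix $\theta^0,\theta^1,\theta^2$ pointwise these steps commute and the argument closes uniformly, producing the normal form asserted in the statement.
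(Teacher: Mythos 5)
Your proposal follows the paper's own argument in all essentials: flatness is encoded as $d\tau_1=d\tau_2=0$ and $D\omega=-\sigma\,\theta^1\wedge\theta^2$ (\ref{locfC}), the coordinates $(t,x^1,x^2)$ come from the closed forms $\eta,\theta^1,\theta^2$, the gauge freedom (\ref{gtran})--(\ref{gauge}) is used to normalize $\omega$, and the Cartan equations (\ref{cartans}) then force $\theta^3=u\theta^1+dy^1$, $\theta^4=v\theta^2+dy^2$ with $u,v$ as in (\ref{exlocuv}) and the stated PDE on $(A,B)$. The only real difference is organizational: you normalize $\omega$ to $a\theta^1+b\theta^2$ uniformly (tacitly using that flatness gives $d\omega\wedge\theta^1\wedge\theta^2=0$, which justifies writing $\omega=a\theta^1+b\theta^2+df'$), whereas the paper splits into the strata $\tau_1\wedge\tau_2\neq0$, $\tau_1=\tau_2=0$ and $\tau_1\neq0=\tau_2$ with a separate normalization in each (e.g.\ making $\omega$ closed in the last case), so the two proofs coincide in substance.
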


\subsection{Isotropy groups}
By an structure automorphism  (in general local) it is understood an isometry $f$, such that 
\[
 f_*\varphi = \varphi f_*, \quad f_*\xi = \xi,\quad f^*\eta = \eta.
\]
 Respectively an infinitesimal automorphism is a Killing vector field (local)
 $K$ with the properties
\begin{equation}
\label{exinfI}
 \mathcal{L}_K\varphi =0, \quad \mathcal{L}_K \xi = 0,\quad \mathcal{L}_K\eta = 0,
\end{equation}
here $\mathcal{L}_K$ denotes the Lie derivative. The conditions $\mathcal{L}_K\xi=0$ and 
$\mathcal{L}_K\eta=0$ are equivalent. 

Let $\mathcal{M}$ be a flat manifold, for simplicity we assume $\mathcal{M}=\mathcal{U}$ where $\mathcal{U}\subset \mathbb{R}^5$ 
is a domain and we fix a point $O\in \mathcal{U}$ as origin. We also assume that global Cartesian coordinates on $\mathbb{R}^5$ are 
normal flat coordinates: the metric coefficients are constants in these coordinates.

In what will follow we study automorphism with the origin $O$ as a fixed point and infinitesimal automorphisms vanishing at the origin. 
The correspondence $K \mapsto (\nabla K)_O$ is one-one, as we consider only those infinitesimal automorphisms which 
vanishes at $O$, thus $K\mapsto (\nabla K)_O$ defines a representation of a local Lie algebra $\mathfrak{s}$  of all infinitesimal 
automorphisms vanishing at $O$ as a Lie algebra of endomorphisms of the tangent space $T_O\mathcal{U}$. 

Let $f$ be local automorphism defined near the origin, $f:O\mapsto O$.  
\begin{proposition}
Let $(V_0=\xi, V_1, \ldots, V_4)$ be an adopted frame defined near $O \in \mathcal{U}$, 
then 
\begin{equation}
\label{exautf}
 \begin{array}{c}
f_* V_1 = \varepsilon_1 V_1, \quad f_*V_2 = \varepsilon_2 V_2, \\[+4pt]
f_*V_3 = c\varepsilon_2 V_2+\varepsilon_1 V_3, \quad f_*V_4 = -c\varepsilon_1 V_1+\varepsilon_2 V_4, \\[+4pt]
|\varepsilon_1|=|\varepsilon_2|=1.
 ×
\end{array}
\end{equation}
If $K$ is an infinitesimal automorphism, $K=0$ at the origin, then
\begin{equation}
 \label{exinfK}
\begin{array}{c}
\mathcal{L}_K V_1 =0, \quad \mathcal{L}_K V_2 =0, \\[+4pt]
\mathcal{L}_K V_3 = aV_2, \quad \mathcal{L}_K V_4 = -a V_1.
 ×
\end{array}
\end{equation}
\end{proposition}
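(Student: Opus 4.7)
My plan is to exploit the uniqueness of the adopted frame modulo the gauge transformations (\ref{gtran}). The crucial input is Theorem \ref{Ashape}: the vector fields $V_1$ and $V_2$ are determined, up to a global sign, by the structure tensor $A=-\nabla\xi$.

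First, I would observe that every structure automorphism $f$ preserves $A$. Indeed, $f$ is an isometry and $f_*\xi=\xi$, so $f$ preserves the Levi-Civita connection, and therefore $f_*A=Af_*$. Pushing forward the decomposition $AX=g(X,V_1)V_1+\sigma g(X,V_2)V_2$ and using the isometry identity produces a second decomposition of $A$ at $f(p)$ of exactly the shape given in Theorem \ref{Ashape}, but with $f_*V_1$, $f_*V_2$ in place of $V_1$, $V_2$. Uniqueness then forces $f_*V_1=\pm V_1$ and $f_*V_2=\pm V_2$, and since the choice of sign is a continuous $\{\pm 1\}$-valued function of the base point, it is a constant $\varepsilon_i\in\{\pm 1\}$.

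Next, the intertwining $f_*\varphi=\varphi f_*$ tells us that $f_*$ preserves the $\pm 1$-eigenbundles of $\varphi$, so $f_*V_3\in\mathrm{span}(V_2,V_3)$ and $f_*V_4\in\mathrm{span}(V_1,V_4)$. Writing $f_*V_3=\alpha V_2+\beta V_3$ and $f_*V_4=\gamma V_1+\delta V_4$, the metric relations $g(V_1,V_3)=1$ and $g(V_2,V_4)=1$ immediately give $\beta=\varepsilon_1$ and $\delta=\varepsilon_2$, while the isotropy $g(V_3,V_4)=0$ yields $\alpha\varepsilon_2+\gamma\varepsilon_1=0$. Setting $c:=\varepsilon_2\alpha$ converts these into precisely (\ref{exautf}).

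For the infinitesimal version, I would apply the same argument to the local flow $\phi_t$ of $K$. Each $\phi_t$ is a (local) automorphism, so $\phi_{t*}V_i=\varepsilon_i(t)V_i$ with $\varepsilon_i(t)\in\{\pm 1\}$ for $i=1,2$; continuity in $t$ and $\varepsilon_i(0)=1$ force $\varepsilon_i(t)\equiv 1$, whence differentiating at $t=0$ yields $\mathcal{L}_K V_1=\mathcal{L}_K V_2=0$. Likewise $\phi_{t*}V_3=c(t)V_2+V_3$ with $c(0)=0$, whose derivative gives $\mathcal{L}_K V_3=aV_2$ with $a:=-c'(0)$, and analogously $\mathcal{L}_K V_4=-aV_1$. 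The expected main obstacle is rather mild: in both the finite and infinitesimal settings one must check that the $\pm 1$ sign ambiguity of Theorem \ref{Ashape} is promoted to a global constant, which each time reduces to continuity of a discrete-valued function. The hypothesis $K|_O=0$ does not enter the derivation of (\ref{exinfK}) itself; it serves only to guarantee the injectivity of the representation $K\mapsto(\nabla K)_O$ described immediately before the proposition.
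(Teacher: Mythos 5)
Your argument is correct, and for the finite part it takes a genuinely different route from the paper. You obtain $f_*V_1=\varepsilon_1V_1$, $f_*V_2=\varepsilon_2V_2$ with $\varepsilon_i=\pm1$ in one stroke, from $f_*A=Af_*$ (isometry plus $f_*\xi=\xi$) together with the up-to-sign uniqueness of $V_1,V_2$ in Theorem \ref{Ashape}; the $\varphi$-eigenvector conditions built into that normal form rule out any mixing of $V_1$ and $V_2$, and since rescaling $V_1\mapsto\lambda V_1$ changes the decomposition of $A$ unless $\lambda^2=1$, the unit-modulus condition comes for free. The expressions for $f_*V_3$, $f_*V_4$ then follow purely from $\varphi$-equivariance and the metric identities $g(V_1,V_3)=g(V_2,V_4)=1$, $g(V_3,V_4)=0$, and your bookkeeping ($\beta=\varepsilon_1$, $\delta=\varepsilon_2$, $\alpha\varepsilon_2+\gamma\varepsilon_1=0$, $c=\varepsilon_2\alpha$) reproduces \eqref{exautf} exactly. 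The paper instead writes $f_*V_1=a_1V_1+bV_4$ using only eigenbundle invariance and then invokes the explicit connection formulas of Proposition \ref{lc} twice, via naturality of $\nabla$ under the isometry: once (comparing $g(\xi,\nabla_Xf_*V_1)$ computed two ways) to kill $b$, and once (comparing $g(\xi,\nabla_Xf_*V_3)$) to force $a_1^2=a_2^2=1$; it never appeals to the uniqueness clause of Theorem \ref{Ashape}. Your route is shorter and more conceptual, at the price of relying on that uniqueness statement (which does hold pointwise, since $g(A\cdot,\cdot)$ restricted to each $\varphi$-eigendistribution is of rank one); the paper's route is a self-contained computation in the adopted frame. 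The infinitesimal part is handled identically in both proofs (flow of $K$, continuity forcing $\varepsilon_i(t)\equiv1$, differentiation at $t=0$), and your observation that the hypothesis $K|_O=0$ is not needed for \eqref{exinfK} itself, but only for the injectivity of $K\mapsto(\nabla K)_O$ discussed before the proposition, is consistent with the paper's proof, which likewise never uses it in deriving \eqref{exinfK}.
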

here $a$, $c$ are some functions.
\begin{proof}
 As $f$ is an local automorphism $f_*\varphi = \varphi f_*$, $f_*\xi = \xi$, thus $\pm 1$-eigenspaces of $\varphi$ are
invariant  $\pm f_*V_i =\varphi f_*V_i$, $i=1,2$. Thus $f_* V_1 = a_1V_1+bV_4$ for functions $a_1$, $b$. We shall show 
that $b=0$ everywhere. Projecting $\nabla_Xf_*V_1$ onto $\xi$ we find 
\[
 g(\xi, \nabla_Xf_*V_1) = g(\xi, \nabla_{f^{-1}_*X}V_1) = \tau_1(f^{-1}_*X)g(\xi,V_1) = 0,
\]
from the other hand 
\[
 \nabla_Xf_*V_1 = (da_1(X)+a_1\tau_1(X))V_1 +db(X)V_4+b\nabla_XV_4,
\]
and according to (\ref{lcf}), $g(\xi,\nabla_Xf_*V_1) = \sigma b g(X,V_2)$, so $b=0$ identically. Exactly in the same way
we prove that $f_*V_2 = a_2 V_2$. As $f$ is an isometry
\[
 \begin{array}{c}
f_*V_1 = a_1V_1, \quad f_*V_2 = a_2V_2, \\[+4pt]
f_*V_3 = a_2cV_2+\dfrac{1}{a_1}V_3, \quad f_* V_4 = -a_1c V_1+\dfrac{1}{a_2}V_4,
 ×
\end{array}
\]
$c$ denotes a functions, note that
\[
 g(\xi, \nabla_Xf_*V_3) = g(\xi, \nabla_{f^{-1}_*X}V_3) = g(f^{-1}_*X,V_1) = a_1g(X,V_1),
\]
and from the other hand as $\nabla_X f_*V_3 = \ldots + \dfrac{1}{a_1}\nabla_XV_3$
\[
 g(\xi, \nabla_Xf_*V_3) = \dfrac{1}{a_1}g(\xi,\nabla_XV_3) = \dfrac{1}{a_1}g(X,V_1),
\]
in consequence $a_1 = \dfrac{1}{a_1}$ and $a_1 =\pm 1$, similarly we obtain $a_2=\pm 1$. This proves (\ref{exautf}). 
Let $f^t$ be a local   1-parameter group of automorphisms, $f^t:O\mapsto O$, as $f^0_*=Id$ is the identity transformation 
applying (\ref{exautf}) to each $f^t_*$ we must have $\varepsilon_1=\varepsilon_2=1$ and
\[
f^t_*V_1 = V_1,\quad f^t_*V_2= V_2, \quad f^t_*V_3 =c(t)V_2+V_3, \quad f^t_*V_4 = -c(t)V_1+V_4,
\]
if $K$ is a respective infinitesimal automorphism, $K(x) = \dfrac{d}{dt}f^tx|_{t=0}$, then for example
\[
 \mathcal{L}_K V_1 = \lim\limits_{t\rightarrow 0}\dfrac{1}{t}[V_1-f^{t}_*V_1] =0,
\]
and similarly we find $\mathcal{L}_KV_2=0$, $\mathcal{L}_KV_3=aV_2$, $\mathcal{L}_KV_4=-aV_1$.   
\end{proof}
\begin{proposition}
\label{exiso}
 If $K$ is an infinitesimal automorphism vanishing the origin, $K(O)=0$, $\mathcal{L}_K\theta^3 = a\theta^2$ then in coordinates as 
in the Theorem \ref{exthlocd}
\begin{equation}
\begin{array}{c}
K = k_1(x^1,x^2)\partial_{y^1}+k_2(x^1,x^2)\partial_{y^2},\\[+4pt]
 dk_1 = -\alpha_1 k_1 dx^1 +adx^2, \quad dk_2 = -\alpha_2 k_2dx^2 -adx^1, \\[+4pt]
\partial_{x^1}a = -\alpha_1 a,\quad \partial_{x^2}a = -\alpha_2 a,
 ×
\end{array}
\end{equation}
The local Lie algebra of all infinitesimal automorphisms vanishing at $O$ is 1-dimensional, equivalently for 
the infinitesimal automorphisms $K_1$, $K_2$ vanishing at $O$ there is a constant $c_{12}$ such that
$K_1 = c_{12}K_2$.
\end{proposition}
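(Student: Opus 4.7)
My plan is to push the previous proposition through in the explicit flat coordinates of Theorem \ref{exthlocd}. First I would identify the adopted frame $(V_0,V_1,V_2,V_3,V_4)$ in terms of the coordinate vector fields: since $V_0=\xi=\partial_t$ and the metric-duality relations $\theta^i(V_j)=g(V_i,V_j)$ can be solved from the explicit coframe $\theta^1=dx^1$, $\theta^2=dx^2$, $\theta^3=u\,dx^1+dy^1$, $\theta^4=v\,dx^2+dy^2$, one finds
\[
V_1=\partial_{y^1},\quad V_2=\partial_{y^2},\quad V_3=\partial_{x^1}-u\partial_{y^1},\quad V_4=\partial_{x^2}-v\partial_{y^2}.
\]
Writing $K=K^0\partial_t+K^1\partial_{x^1}+K^2\partial_{x^2}+K^3\partial_{y^1}+K^4\partial_{y^2}$, the three trivial conditions $\mathcal{L}_K\xi=0$, $\mathcal{L}_KV_1=0$, $\mathcal{L}_KV_2=0$ from the previous proposition amount to $[\partial_t,K]=[\partial_{y^1},K]=[\partial_{y^2},K]=0$, so each coefficient $K^i$ depends only on $(x^1,x^2)$.

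Next I would expand the two non-trivial conditions $\mathcal{L}_KV_3=aV_2$ and $\mathcal{L}_KV_4=-aV_1$. Using $[K,\partial_{x^1}-u\partial_{y^1}]=-\partial_{x^1}K-K(u)\partial_{y^1}$ (the last because $\partial_{y^1}K=0$), the $\partial_t,\partial_{x^1},\partial_{x^2}$ components force $\partial_{x^1}K^0=\partial_{x^1}K^1=\partial_{x^1}K^2=0$, and similarly from $\mathcal{L}_KV_4$ all three are independent of $x^2$; hence $K^0,K^1,K^2$ are constants. The vanishing $K(O)=0$ then kills them, so $K=k_1\partial_{y^1}+k_2\partial_{y^2}$ with $k_1=K^3(x^1,x^2)$, $k_2=K^4(x^1,x^2)$. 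The surviving $\partial_{y^1}$- and $\partial_{y^2}$-components now read
\[
\partial_{x^1}k_1=-K(u)=-\alpha_1k_1,\quad \partial_{x^1}k_2=-a,\quad \partial_{x^2}k_1=a,\quad \partial_{x^2}k_2=-K(v)=-\alpha_2k_2,
\]
which is exactly the stated system for $dk_1,dk_2$ once one plugs in $K(u)=k_1\partial_{y^1}u=\alpha_1k_1$ and $K(v)=\alpha_2k_2$.

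The two equations $\partial^2_{x^1x^2}k_1=\partial^2_{x^2x^1}k_1$ and the analogue for $k_2$ (that is, $d^2k_1=d^2k_2=0$) are the integrability conditions; remembering that $\alpha_1=\alpha_1(x^1)$ and $\alpha_2=\alpha_2(x^2)$ (from Theorem \ref{exthlocd}), they collapse immediately to $\partial_{x^1}a=-\alpha_1a$ and $\partial_{x^2}a=-\alpha_2a$. Finally, for the dimension count, given any $K\in\mathfrak{s}$ the whole triple $(a,k_1,k_2)$ is determined by its value at $O$ through a first-order linear Pfaffian system of Frobenius type; since $K(O)=0$ forces $k_1(O)=k_2(O)=0$, the only free initial datum is $a(O)\in\mathbb{R}$. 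Hence $K\mapsto a(O)$ is injective on $\mathfrak{s}$, so for two such fields $K_1,K_2$ with values $a_i(O)$ the combination $a_2(O)K_1-a_1(O)K_2$ has vanishing initial data and, by uniqueness of the system, is identically zero, giving $\dim\mathfrak{s}\leqslant 1$. The main (and only mildly delicate) step is the bookkeeping in the $\mathcal{L}_KV_3,\mathcal{L}_KV_4$ expansion; everything else is just recording which components of the Pfaffian system survive after $K(O)=0$ is imposed.
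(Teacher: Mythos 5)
Your argument is correct and in substance the same as the paper's: you arrive at the same first-order linear system for $(k_1,k_2,a)$, the same integrability conditions $\partial_{x^1}a=-\alpha_1 a$, $\partial_{x^2}a=-\alpha_2 a$, and the same uniqueness-of-initial-data argument giving $\dim\mathfrak{s}\leqslant 1$. The only difference is bookkeeping: you dualize to the explicit frame $V_1=\partial_{y^1}$, $V_2=\partial_{y^2}$, $V_3=\partial_{x^1}-u\,\partial_{y^1}$, $V_4=\partial_{x^2}-v\,\partial_{y^2}$ and expand Lie brackets, whereas the paper stays with the coframe and uses that $\theta^0,\theta^1,\theta^2$ are closed, so $\mathcal{L}_K\theta^i=d(\theta^i(K))$ forces $\theta^i(K)$ to be constant and hence zero.
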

\begin{proof}
 As $K$ is Killing vector field the equations (\ref{exinfK}) in metric-dual coframe $(\theta^0=\eta,\theta^1,\ldots,\theta^4)$ reads as
\[
 \begin{array}{c}
\mathcal{L}_K\theta^0 = 0,\quad \mathcal{L}_K\theta^1 = 0,\quad \mathcal{L}_K\theta^2 = 0, \\[+4pt]
\mathcal{L}_K\theta^3 = a\theta^2, \quad \mathcal{L}_K\theta^4 = -a\theta^1,
 ×
\end{array}
\]
as the 1-forms $(\theta^0,\theta^1,\theta^2)$ are closed the functions $\theta^0(K)$, $\theta^1(K)$, 
$\theta^2(K)$ are constants and thus they identically zero for they vanish at the origin $O$.
In the local coordinates $(t,x^1,x^2,y^1,y^2)$ from the Theorem \ref{exthlocd} 
\[
 K = k_1\partial_{y^1}+k_2\partial_{y^2},
\]
and the equation $\mathcal{L}_K\theta^3 = a\theta^2$ yields 
\[
\mathcal{L}_K\theta^3=\mathcal{L}_K(u\theta^1+dy^1) = (\mathcal{L}_Ku)\theta^1+dk_1 = adx^2,    
\]
thus $dk_1 = -du(K)dx^1+adx^2$ which proves $k_1 = k_1(x^1,x^2)$ and analogously for $k_2$, 
$dk_2=-dv(K)dx^2-adx^1$, $k_2 = k_2(x^1,x^2)$.  According to (\ref{exlocuv}) 
\begin{equation}
\label{exdk12}
 dk_1 = -\alpha_1 k_1 dx^1 +adx^2, \quad dk_2 = -\alpha_2 k_2dx^2 -adx^1,
\end{equation}
and in consequence for the function $a$ we find
\begin{equation}
\label{exapde}
 \partial_{x^1}a = -\alpha_1 a,\quad \partial_{x^2}a = -\alpha_2 a,
\end{equation}
the last identity follows that given two infinitesimal automorphisms $K_1$, $K_2$ and functions $a_1$, $a_2$,
$\mathcal{L}_{K_1}\theta^3 = a_1\theta^2$, $\mathcal{L}_{K_2}\theta^3 = a_2\theta^2$, the proportion $a_1/a_2$ is 
a constant, $a_1/a_2 = const.$ Explicitly the solution of (\ref{exapde}) can be given  the form
\[
\mathcal{U}\ni p \mapsto a(p) = Ae^{-\int\limits_\gamma \tau_1+\tau_2}, \quad A = a(O)= const,
\]
where $\gamma\subset \mathcal{U}$ is an arbitrary (smooth) curve joining the origin and the point $p$. If the function $a$ vanishes at the origin it vanishes everywhere and this is possible if and only if the corresponding infinitesimal automorphism is null, i.e. $K=0$ identically. Indeed, we have $\mathcal{L}_KV_i = 0$, $i=0,\ldots,4$,
 and the covariant derivative $\nabla K$ vanishes at the origin but then as both $K$ and $\nabla K$ vanish at the origin, 
$K$ vanishes near $O$ and thus everywhere as $\mathcal{U}$ is connected. We fix now the function $a$ requiring 
$a(O)=1$ and note this as $a_0$ thus (\ref{exdk12}) takes the form
\[
 dk_1 = -\alpha_1k_1 dx^1 +Ca_0dx^2, \quad dk_2 = -\alpha_2 k_2 dx^2 -Ca_0dx^1, \quad C=const \neq 0,
\]
 a solution with $C=1$ we call the normalized solution. If $(k_1^1,k_2^1)$ and $(k_1^2,k_2^2)$ are normalized solutions 
then the difference $(k_1^1-k_1^2,k_2^1-k_2^2)$ is a solution of the same equations as (\ref{exapde}) vanishing at $O$ which follows that   $(k_1^1,k_2^1)$ and $(k_2^1,k_2^2)$ coincide everywhere.  
\end{proof}

\subsection{Manifolds with transitive automorphism groups}
Here we provide a detailed discusion of the example of a flat manifold  from \cite{Dtsu}. At fist we recall the definition

\begin{example}
\label{exex}
 Let $\mathcal{M}=\mathbb{R}^5$, $\mathcal{M}\ni p = (z,u_1,u_2,v_1,v_2)$, an almost para-contact structure 
$(\varphi, \xi, \eta, g)$ is defined on $\mathcal{M}$ by
\begin{equation}
\label{exE1}
 \begin{array}{c}
\varphi \dfrac{\partial}{\partial z} = 2u_1\dfrac{\partial}{\partial v^1} -2u_2\dfrac{\partial}{\partial v_2},\\[+8pt]
\varphi \dfrac{\partial}{\partial v_1} = \dfrac{\partial}{\partial v_1}, \quad 
\varphi \dfrac{\partial}{\partial v_2} = - \dfrac{\partial}{\partial v_2}, \\[+8pt]
\varphi \dfrac{\partial}{\partial u_1} = -2u_1\dfrac{\partial}{\partial z} -\dfrac{\partial}{\partial u_1} 
+ 4u_1u_2\dfrac{\partial}{\partial v_2}, \\[+8pt]
\varphi \dfrac{\partial}{\partial u_2} = 2u_2\dfrac{\partial}{\partial z} +\dfrac{\partial}{\partial u_2} 
-4u_1u_2 \dfrac{\partial}{\partial v_1}, \\[+8pt]
\xi = \dfrac{\partial}{\partial z} - 2u_1\dfrac{\partial}{\partial v_1} - 2u_2\dfrac{\partial}{\partial v_2},\\[+8pt]
\eta = dz - 2u_1du_1 - 2u_2du_2, \\[+8pt]
g = dz^2 +2du_1dv_1+2du_2dv_2,
 ×
\end{array} 
\end{equation}
we directly verify that $\mathcal{M}$ equipped with this structure becomes a weakly para-cosymplectic manifold with
para-K\"ahler leaves, evidently $\mathcal{M}$ is flat as pseudo-Riemannian manifold, for the tensor $A=-\nabla\xi$, we have
\[
 A = 2du_1\otimes\dfrac{\partial}{\partial v_1}+2du_2\otimes\dfrac{\partial}{\partial v_2},
\]
so $\mathcal{M}$ is non-paracosymplectic and of elliptic type. Introducing the adopted
frame $(V_0=\xi)$
\[
 \begin{array}{c}
V_1 = \sqrt{2}\dfrac{\partial}{\partial v_2}, \quad V_2 = \sqrt{2}\dfrac{\partial}{\partial v_1}, \\[+8pt]
V_3 = \sqrt{2}u_2\dfrac{\partial}{\partial z} +\dfrac{1}{\sqrt{2}}\dfrac{\partial}{\partial u_2} - \sqrt{2}(u_2)^2\dfrac{\partial}{\partial v_2}, \\[+8pt]
V_4 = \sqrt{2}u_1\dfrac{\partial}{\partial z} +\dfrac{1}{\sqrt{2}}\dfrac{\partial}{\partial u_1} 
-\sqrt{2}(u_1)^2\dfrac{\partial}{\partial v_1},
 ×
\end{array}
\]
we find that the only non-vanishing commutators from $[V_i,V_j]$, $i,j=0,\ldots,4$ are
\[
  \quad [\xi, V_3] = V_1, \quad [\xi, V_4] = V_2,
\]
thus the frame constitutes a base for a 5-dimensional Lie algebra of vector fields. Let $\mathcal{G}$ be a Lie group 
with the Lie algebra $\mathfrak{g}$ isomorphic to the one spanned by $(\xi,V_1,\ldots,V_4)$ thus we can export the 
structure $(\varphi,\xi,\eta,g)$ to a left-invariant structure $(\tilde\varphi, \tilde\xi,\tilde\eta,\tilde g)$ defined on $\mathcal{G}$ and these structures are locally isomorphic. In particular $\tilde g$ is flat left-invariant pseudo-metric on the non-abelian Lie group $\mathcal{G}$. We note that $\mathcal{G}$ acts on itself by left-translations as group of automorphisms of the structure. So, $\mathcal{G}$ is an example of flat weakly 
para-cosymplectic manifold, of elliptic type,  with a transitive group of automorphisms.    
\end{example}

 First, note that we may identify the manifold $\mathcal{M}$ which appears above with an open neighborhood $\mathcal{U}_e = \mathcal{M}\subset \mathcal{G}$ of the identity element 
$e\in \mathcal{G}$, $(z,u_1,u_2,v_1,v_2)$ are then local coordinates near $e=(0,0,0,0,0)$ and (\ref{exE1}) 
is a local description of the left-invariant structure $(\tilde\varphi,\tilde\xi,\tilde\eta,\tilde g)$ 
on $\mathcal{U}$. Let introduce new coordinates $(x^1,\ldots,x^5)$ on $\mathcal{M}$
\[
 x^1=z,\quad x^2 =\dfrac{1}{\sqrt{2}}v_2, \quad x^3=\dfrac{1}{\sqrt{2}}v_1, \quad x^4=\sqrt{2}u_2,\quad x^5 = \sqrt{2}u_1,
\]
then $g=(dx^1)^2 + 2dx^2dx^4+2dx^3dx^5$, now we consider a group of affine maps of the form 
\[
 (\mathbb{A}, \vec{p})=((a^i_j), (p^i)): x^i \mapsto x^{'i} = a^i_jx^i+p^j, \quad i,j=1,\ldots,5,
\]
assuming $(\mathbb{A},\vec{p})$ is  an automorphism of the structure $(\varphi,\xi,\eta,g)$. Explicitly for maps from the 
connected component of the identity we find
\[
 \begin{array}{l}
x^1 \mapsto x^{'1} = x^1+p^4x^4+p^5x^5+p^1,\\[+4pt]
x^2 \mapsto x^{'2} = -p^4x^1+x^2-(p^4)^2x^4/2+a^2_5x^5+p^2, \\[+4pt]
x^3 \mapsto x^{'3} = -p^5x^1+x^3 -(p^4p^5+a^2_5)x^4-(p^5)^2x^5/2+p^3, \\[+4pt]
x^4 \mapsto x^{'4} = x^4+p^4,\\[+4pt]
x^5 \mapsto x^{'5} = x^5+p^5,
 ×
\end{array}
\]
here $a^2_5$ simply denotes arbitrary real parameter and $(p^1,\ldots,p^5)$ are components of the translation vector $\vec{p}$.  
Using a representation as a $6\times 6$-matrices 
\[
  A = 
\begin{pmatrix}
 \mathbb{A} & \vec{p} \\
  0         &  1
\end{pmatrix}
\]
the generators of the respective Lie algebra are values of the canonical right-invariant Cartan-Maurer form $\omega = dA\cdot A^{-1}$, 
thus we have 
\[
 K_i = \omega(\partial_{p^i})|_{e}= \partial_{p^i}A\cdot A^{-1}|_{e}, \; i=1,\ldots,5, 
\quad K_6 = \omega(\partial_{a^2_5})|_{e} = \partial_{a^2_5}A\cdot A^{-1}|_{e},
\]
$e=(0,\ldots,0)$, by $\exp(tK_i)$ we denote the respective 1-parameter groups. Each $\exp(tK_i)$ we treat as a 1-parameter
group of affine maps of $\mathcal{M}$ and these affine maps are automorphisms of the structure, for example $\exp(tK_4)$ 
are of the form
\[
\begin{array}{l}
x^1 \mapsto x^{'1}= x^1+tx^4+t^2/2, \\[+4pt]
 x^2 \mapsto x^{'2} = -tx^1 + x^2 -t^2x^4/2 - t^3/6, \\[+4pt]
 x^3 \mapsto x^{'3} = x^3, \\[+4pt]
 x^4 \mapsto x^{'4} = x^4+t, \\[+4pt]
 x^5 \mapsto x^{'5} = x^5,
 ×
\end{array}
\]
if $\tilde K_i$ are the infinitesimal automorphisms defined by these 1-parameter groups then we find 
\[
 \begin{array}{c}
\tilde K_1 = \partial_{x^1},\quad \tilde K_2 = \partial_{x^2},\quad \tilde K_3 = \partial_{x^3}, \\[+4pt]
\tilde K_4 = x^4\partial_{x^1}-x^1\partial_{x^2}+\partial_{x^4}, \quad 
\tilde K_5 = x^5\partial_{x^1}-x^1\partial_{x^3}+\partial_{x^5}, \\[+4pt]
\tilde K_6 = x^5\partial_{x^2}-x^4\partial_{x^3},
 ×
\end{array}
\]
we now easily obtain the structure for the Lie algebra spanned by $(\tilde K_1,\ldots, \tilde K_6)$, for the non-zero 
commutators are
\[
 \begin{array}{c}
[\tilde K_1, \tilde K_4] = -\tilde K_2, \quad [\tilde K_1, \tilde K_5] = -\tilde K_3, \\[+4pt]
[\tilde K_4, \tilde K_5] = \tilde K_6, \quad [\tilde K_4, \tilde K_6] = -\tilde K_3, \quad 
[\tilde K_5, \tilde K_6] = \tilde K_2 
 ×
\end{array}
\]
we denote this Lie algebra by $\mathfrak{s}$, we note that $\mathfrak{s}$ is a 3-step nilpotent, the vector field 
$\tilde K_6$ it is exactly the generator of the isotropy group described in the Proposition \ref{exiso}.
Summarizing our discussion we notice that the problem of a description of the Lie groups of automorphisms of the manifold
$\mathcal{M}$ described in the Example \ref{exex} is non-trivial despite the fact the metric is flat. We notice that $\mathcal{M}$ 
admits at least two non-isomorphic effective groups of automorphisms: the subgroup of affine isometries described by the Lie 
algebra $\mathfrak{s}$ of infinitesimal automorphisms and a 1-dimensional normal extention of the Lie group $\mathcal{G}$ described 
in the Example \ref{exex}. Also note that as a corollary we obtain that the left action of the group $\mathcal{G}$ on itself is 
non-affine; left translations in general can not be expressed  as affine maps in the given coordinates.
    
\section{ Left-invariant structures}
 Given a left-invariant weakly para-cosymplectic structure $(\phi, \xi , \eta, g)$ on a Lie group $\mathcal{G}$. Then 
an adopted frame consists from left-invariant vector fields therefore  the forms $\tau_1$, $\tau_2$ and 
$\omega$ appearing  in the Proposition \ref{lc}  are all left-invariant.
In consequence functions $\alpha_1$ and $\alpha_2$ are constants.   The Proposition {\bf \ref{lc}} yields  the following system of the structure equations for the Lie algebra of $\mathcal{G}$ 
\begin{equation}
\label{streq}
\begin{array}{l}
 d\eta  =  d\theta^1 = d\theta^2 = 0,  \\[+2pt]
 d\theta^3  =  \theta^1\wedge \eta - \alpha_1\theta^1\wedge \theta^3 + \omega \wedge \theta^2, \\[+2pt]
 d \theta^4  =\sigma\theta^2\wedge \eta - \alpha_2\theta^2\wedge \theta^4 - \omega \wedge \theta^1, 
\end{array}
\end{equation}
or with respect to the dual frame $\gamma^i(V_j)=\delta^i_j$, $i,j=0,\ldots,4$
\begin{equation}
\begin{array}{l}
 d\gamma^0  =  d\gamma^3 = d\gamma^4 = 0,  \\[+2pt]
 d\gamma^1  =  \gamma^3\wedge \gamma^0 - \alpha_1\gamma^3\wedge \gamma^1 + \omega \wedge \gamma^4, \\[+2pt]
 d \gamma^2  =\sigma\gamma^4\wedge \gamma^0 - \alpha_2\gamma^4\wedge \gamma^2 - \omega \wedge \gamma^3.
\end{array}
\end{equation}
 Applying the exterior derivative we get 
the  integrability conditions
\begin{equation}
\label{lhintcond}
\begin{array}{l}
 0 = d\omega \wedge \theta^2 - \alpha_1 \omega \wedge \theta^1\wedge \theta^2, \\[+2pt]
0 = d\omega \wedge \theta^1 + \alpha_2 \omega \wedge \theta^1\wedge \theta^2.
\end{array}
\end{equation}
Let
\begin{equation}
\label{lhomeg} 
\omega = \alpha_0\eta + \beta_1\theta^1+\beta_2\theta^2+\alpha_3 \theta^3 + \alpha_4\theta^4 = 
\alpha_0\gamma^0 + \alpha_3 \gamma^1 + \alpha_4\gamma^2+\beta_1\gamma^3+\beta_2\gamma^4,
\end{equation}
here $\alpha_0$, $\alpha_3$, $\alpha_4$ and $\beta_1$, $\beta_2$ are real constants. Plugging (\ref{lhomeg}) 
 into  (\ref{lhintcond}) we get the following constraints on the constants $\alpha_0$,  $\alpha_1$, $\alpha_2$,  $\alpha_3$, $\alpha_4$
\begin{equation}
 \begin{array}{l}
\alpha_0 (\alpha_1 + \alpha_4) = -\alpha_3, \quad\quad
\alpha_0 (\alpha_2 -\alpha_3) = -\sigma\alpha_4, \\
\alpha_3 (\alpha_2 -\alpha_3) = 0, \quad\quad 
\alpha_4 (\alpha_1 + \alpha_4) = 0, \\
\alpha_3\alpha_4 = 0,
\end{array}
\end{equation}
 we have the following explicit solutions and the corresponding forms ($d\gamma^0=d\gamma^3=d\gamma^4=0$ in the all cases) :
\begin{itemize}
 \item[(A)] $\alpha_3=0$, $\alpha_4\neq 0$,
\begin{equation*}
\begin{array}{c}
 \omega = \sigma\dfrac{\alpha_1}{\alpha_2}\gamma^0 -\alpha_1 \gamma^2 +\beta_1\gamma^3+\beta_2\gamma^4, \quad \tau_1\wedge\tau_2 \neq 0,\\[+4pt]
  d\gamma^1  =  -\gamma^0\wedge \gamma^3 +\sigma\dfrac{\alpha_1}{\alpha_2}\gamma^0\wedge\gamma^4 +\alpha_1\gamma^1\wedge\gamma^3
  -\alpha_1\gamma^2\wedge\gamma^4+\beta_1\gamma^3\wedge\gamma^4,\\[+4pt]
d \gamma^2  = -\sigma \dfrac{\alpha_1}{\alpha_2}\gamma^0\wedge\gamma^3 -\sigma\gamma^0\wedge\gamma^4 +\alpha_1\gamma^2\wedge\gamma^3 
+\alpha_2\gamma^2\wedge\gamma^4+\beta_2\gamma^3\wedge\gamma^4,
\end{array}
\end{equation*}

\item[(B)] $\alpha_3 \neq 0$, $\alpha_4 = 0$,
\begin{equation*}
\begin{array}{c}
 \omega = -\dfrac{\alpha_2}{\alpha_1}\gamma^0 +\alpha_2 \gamma^1+\beta_1\gamma^3+\beta_2\gamma^4, \quad \tau_1\wedge\tau_2 \neq 0,\\[+4pt]
   d\gamma^1  = -\gamma^0\wedge\gamma^3 -\dfrac{\alpha_2}{\alpha_1}\gamma^0\wedge\gamma^4+\alpha_1\gamma^1\wedge\gamma^3
  + \alpha_2\gamma^1\wedge\gamma^4+\beta_1\gamma^3\wedge\gamma^4,\\[+4pt] 
 d \gamma^2  = \dfrac{\alpha_2}{\alpha_1}\gamma^0\wedge\gamma^3 -\sigma \gamma^0\wedge\gamma^4 -\alpha_2\gamma^1\wedge\gamma^3
+\alpha_2\gamma^2\wedge\gamma^4+\beta_2\gamma^3\wedge\gamma^4,
\end{array}
\end{equation*}

\item[(C1)] $\alpha_3=\alpha_4=0$,
\begin{equation*}
\begin{array}{c}
\omega = \alpha_0 \gamma^0+\beta_1\gamma^3+\beta_2\gamma^4, \quad \alpha_0 \neq 0, \quad \tau_1=\tau_2=0,   \\[+4pt]
d\gamma^1 = -\gamma^0\wedge\gamma^3+\alpha_0\gamma^0\wedge\gamma^4+ \beta_1\gamma^3\wedge\gamma^4, \\[+4pt]
d\gamma^2 = -\alpha_0\gamma^0\wedge\gamma^3-\sigma \gamma^0\wedge\gamma^4+\beta_2\gamma^3\wedge\gamma^4, 
\end{array}
\end{equation*}
\item[(C2)] $\alpha_0=\alpha_3=\alpha_4=0$ and 
\begin{equation*}
\begin{array}{c}
 \omega =  \beta_1\gamma^3+\beta_2\gamma^4, \quad \tau_1,\;\tau_2 \textrm{ - are arbitrary}, \\[+4pt]
d\gamma^1 = -\gamma^0\wedge\gamma^3+\alpha_1\gamma^1\wedge\gamma^3+\beta_1\gamma^3\wedge\gamma^4, \\[+4pt]
d\gamma^2 = -\sigma \gamma^0\wedge\gamma^4+\alpha_2\gamma^2\wedge\gamma^4+\beta_2\gamma^3\wedge\gamma^4.
\end{array}
\end{equation*}
\end{itemize}

According to these cases (A), (B), (C1) and (C2) we introduce four families of  Lie groups $\mathcal{G}_1$, $\mathcal{G}_2$, $\mathcal{G}_3$, $\mathcal{G}_4$ with the respective Lie algebras: 
\begin{equation*}
 \begin{array}{l}
\textrm{(A)}\;\mathfrak{g}_1:\quad\begin{cases}
  \left[ \xi, V_3 \right] = V_1 +\sigma\dfrac{\alpha_1}{\alpha_2}V_2, \quad
  \left[ V_1, V_3 \right] = -\alpha_1 V_1, \quad 
  \left[ V_2, V_3 \right] = -\alpha_1 V_2, \\[+10pt]
\left[ \xi, V_4 \right] = -\sigma\dfrac{\alpha_1}{\alpha_2} V_1+\sigma V_2, \quad
\left[ V_2, V_4 \right] = \alpha_1V_1-\alpha_2 V_2, \quad [V_3, V_4] = -\beta_1V_1-\beta_2 V_2,  
\end{cases} \\[+30pt]
\textrm{(B)}\;\mathfrak{g}_2:\quad\begin{cases}
  \left[ \xi, V_3\right] = V_1 - \dfrac{\alpha_2}{\alpha_1}V_2, \quad
  \left[ V_1, V_3 \right] = -\alpha_1 V_1 +\alpha_2 V_2, \quad [V_3,V_4] = -\beta_1V_1-\beta_2V_2, \\[+10pt]
\left[ \xi, V_4 \right] = \dfrac{\alpha_2}{\alpha_1} V_1+\sigma V_2, \quad
\left[ V_1, V_4 \right] = -\alpha_2 V_1, \quad
\left[ V_2, V_4 \right] = -\alpha_2 V_2,  
\end{cases} \\[+30pt]
\textrm{(C1)}\;\mathfrak{g}_4:\quad\begin{cases} 
 \left[ \xi, V_3\right] = V_1 +\alpha_0V_2, \quad
  \left[ \xi, V_4 \right] = -\alpha_0 V_1+\sigma V_2, \quad [V_3,V_4] = -\beta_1V_1-\beta_2V_2,
\end{cases}\\[+15pt]
\textrm{(C2)}\;\mathfrak{g}_3:\quad\begin{cases}
  \left[\xi,  V_3\right] = V_1,\quad
  \left[ V_1, V_3 \right] = -\alpha_1 V_1, \\[+10pt]
\left[\xi,  V_4 \right] = \sigma V_2, \quad
\left[ V_2, V_4 \right] = -\alpha_2 V_2,\quad [V_3,V_4] = -\beta_1V_1-\beta_2V_2.  
\end{cases}
\end{array}
\end{equation*}
We summarize our considerations in the following manner
\begin{theorem}
 If there is a left-invariant structure $(\phi, \xi, \eta, g)$, so all the tensors are left-invariant, on a Lie group 
$\mathcal{G}$, making $\mathcal{G}$  a hyperbolic or elliptic  weakly para-cosymplectic manifold 
with para-K\"ahler leaves, then $\mathcal{G}$ belongs to the one of the  classes $\mathcal G_1$, $\mathcal G_2$, $\mathcal G_3$,
$\mathcal G_4$. 
\end{theorem}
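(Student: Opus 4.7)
The plan is to exploit the left-invariance to reduce everything to linear algebra on the Lie algebra $\mathfrak{g}$ of $\mathcal{G}$, and then to run a case analysis on how the structure 1-form $\omega$ can decompose. First I would observe that, since $(\varphi,\xi,\eta,g)$ is left-invariant, the vector fields $V_1,V_2$ from Theorem \ref{Ashape} can be chosen left-invariant (they are determined up to sign by the algebraic structure of $A$), and the extension to an adopted frame $(\xi,V_1,V_2,V_3,V_4)$ can likewise be made left-invariant. Consequently the 1-forms $\tau_1,\tau_2,\omega$ from Proposition \ref{lc} are all left-invariant, and the functions $\alpha_1,\alpha_2$ in $\tau_i = \alpha_i\theta^i$ are real constants.

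Next I would write down the left-invariant Maurer--Cartan system, which by Proposition \ref{lc} takes the form (\ref{streq}). Applying $d^2=0$ to $d\theta^3$ and $d\theta^4$ and using $d\eta=d\theta^1=d\theta^2=0$ yields the integrability conditions (\ref{lhintcond}). The key step is to expand $\omega$ as in (\ref{lhomeg}) with real-constant coefficients $\alpha_0,\alpha_3,\alpha_4,\beta_1,\beta_2$ and substitute into (\ref{lhintcond}); matching coefficients of the linearly independent 3-forms $\gamma^i\wedge\gamma^j\wedge\gamma^k$ produces the algebraic system
\begin{equation*}
\alpha_0(\alpha_1+\alpha_4)=-\alpha_3,\quad \alpha_0(\alpha_2-\alpha_3)=-\sigma\alpha_4,\quad \alpha_3(\alpha_2-\alpha_3)=0,\quad \alpha_4(\alpha_1+\alpha_4)=0,\quad \alpha_3\alpha_4=0.
\end{equation*}

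I would then dispatch this system by the dichotomy $\alpha_3\alpha_4=0$. The sub-cases $\alpha_3=0,\alpha_4\neq 0$ and $\alpha_3\neq 0,\alpha_4=0$ force $\alpha_1\alpha_2\neq 0$ and give the forms of $\omega$ recorded in (A) and (B), respectively, since the first two relations then solve for $\alpha_0$ and for $\alpha_3$ or $\alpha_4$ in terms of $\alpha_1,\alpha_2$. The remaining case $\alpha_3=\alpha_4=0$ splits further according to whether $\alpha_0\neq 0$ (forcing $\alpha_1=\alpha_2=0$ by the first two equations, which gives (C1)) or $\alpha_0=0$ (leaving $\alpha_1,\alpha_2$ free, which gives (C2)). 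In each sub-case the constants $\beta_1,\beta_2$ remain unconstrained.

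Finally, in each of the four scenarios I would convert the structure equations for $(d\gamma^0,\ldots,d\gamma^4)$ into the dual bracket relations on $(\xi,V_1,V_2,V_3,V_4)$ via $d\gamma^k(V_i,V_j)=-\gamma^k([V_i,V_j])/2$ (or equivalently, read off the brackets so that $d$ agrees with the Chevalley--Eilenberg differential), producing the four Lie algebras $\mathfrak{g}_1,\mathfrak{g}_2,\mathfrak{g}_3,\mathfrak{g}_4$. The Jacobi identity will then be automatic because it is equivalent to $d^2=0$, which we have already enforced. I expect the main obstacle to be purely bookkeeping: making sure that the gauge freedom (\ref{gtran}) in choosing the adopted frame is either left-invariant (so that no generality is lost) or is absorbed in the free parameters $\beta_1,\beta_2$, so that the four classes really are exhaustive rather than merely covering all adopted-frame presentations.
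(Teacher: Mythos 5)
Your proposal is correct and follows essentially the same route as the paper: left-invariance makes $\tau_1,\tau_2,\omega$ constant-coefficient, the structure equations (\ref{streq}) plus $d^2=0$ give (\ref{lhintcond}), expanding $\omega$ as in (\ref{lhomeg}) yields exactly the algebraic system $\alpha_0(\alpha_1+\alpha_4)=-\alpha_3$, $\alpha_0(\alpha_2-\alpha_3)=-\sigma\alpha_4$, $\alpha_3(\alpha_2-\alpha_3)=\alpha_4(\alpha_1+\alpha_4)=\alpha_3\alpha_4=0$, and the same case split (A), (B), (C1), (C2) produces, by dualizing to brackets, the families $\mathfrak{g}_1,\ldots,\mathfrak{g}_4$. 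The only difference is cosmetic: you make explicit the choice of a left-invariant adopted frame and the gauge-freedom bookkeeping, which the paper simply asserts.
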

\begin{remark}
 The Example from the previous section is a Lie group from the class (C2) with $\alpha_1=\alpha_2=0$, $\beta_1=\beta_2=0$ 
and $\sigma=1$. 
Note also that the groups always appears in the dual pairs where duality is established by the value of $\sigma=\pm 1$.
\end{remark}

\end{document}